 \definecolor{darkblue}{rgb}{0.0,0,0.7} 
\numberwithin{equation}{section}
\newcommand{\Fg}{\mathfrak{g}}
\newcommand{\Fh}{\mathfrak{h}}
\newcommand{\BZ}{\mathbb{Z}}
\newcommand{\BR}{\mathbb{R}}
\newcommand{\BC}{\mathbb{C}}
\newcommand{\BB}{\mathbb{B}}
\newcommand{\CA}{\mathcal{A}}
\newcommand{\CL}{\mathcal{L}}
\newcommand{\CB}{\mathcal{B}}
\newcommand{\Le}{\mathsf{L}}
\newcommand{\OS}{\mathsf{OS}}
\newcommand{\bzero}{\mathbf{0}}
\newcommand{\bc}{\mathbf{c}_{0}}
\newcommand{\vpi}{\varpi}
\newcommand{\GL}{\mathrm{GL}}
\newcommand{\cl}{\mathrm{cl}}
\newcommand{\af}{\mathrm{af}}
\newcommand{\ext}{\mathrm{ext}}
\newcommand{\dir}{\mathop{\rm dir}\nolimits}
\newcommand{\degr}{\mathop{\rm deg}\nolimits}
\newcommand{\wt}{\mathop{\rm wt}\nolimits}
\newcommand{\qwt}{\mathop{\rm qwt}\nolimits}
\newcommand{\Ht}{\mathop{\rm height}\nolimits}
\newcommand{\cHt}{\mathop{\rm coheight}\nolimits}
\newcommand{\QB}{\mathrm{QB}}
\newcommand{\QLS}{\mathrm{QLS}}
\newcommand{\Deg}{\mathop{\rm Deg}\nolimits}
\newcommand{\norm}{\mathrm{norm}}
\newcommand{\Par}{\mathrm{Par}}
\newcommand{\lex}{\mathrm{lex}}
\newcommand{\QBM}{\mathrm{QB}(e\,;\,m_{\lamm})}
\newcommand{\QBX}{\mathrm{QB}(e\,;\,m_{\mu})}
\newcommand{\QBw}{\mathrm{QB}(e\,;\,m_{w\lambda})}
\newcommand{\gch}{\mathop{\rm gch}\nolimits}
\newcommand{\lng}{w_{\circ}}
\newcommand{\lngJ}{w_{J,\circ}}
\newcommand{\lamm}{\lng\lambda}
\newcommand{\p}[1]{\pi(i_{#1})}
\newcommand{\mcr}[1]{\lfloor #1 \rfloor}
\newcommand{\pair}[2]{\langle #1,\,#2 \rangle}
\newcommand{\bpair}[2]{\Bigl\langle #1,\,#2 \Bigr\rangle}
\newcommand{\edge}[1]{\xrightarrow{\ #1 \ }}
\newcommand{\edger}[1]{\xleftarrow{\ #1 \ }}
\newcommand{\blarrl}[1]{\xLeftarrow{#1}}
\newcommand{\enp}[1]{\mathrm{end}(p_{#1})}
\newcommand{\Mac}[1]{E_{#1}(x\,;\,q,\,0)}
\newcommand{\Mact}[1]{E_{#1}(x\,;\,q,\,t)}
\newcommand{\rr}{\Phi_{\af}}
\newcommand{\prr}{\Phi_{\af}^{+}}
\newcommand{\si}{\frac{\infty}{2}}
\newcommand{\sell}{\ell^{\si}}
\newcommand{\sil}{\prec}
\newcommand{\sile}{\preceq}
\newcommand{\sig}{\succ}
\newcommand{\SB}{\mathrm{SiB}^{J}}
\newcommand{\SBb}[1]{\mathrm{SiB}(\lambda\,;\,#1)}
\newcommand{\sLS}{\mathbb{B}^{\si}(\lambda)}
\newcommand{\ol}[1]{\overline{#1}}
\newcommand{\ti}[1]{\widetilde{#1}}
\newcommand{\ub}[2]{\underbrace{#1}_{#2}}
\newenvironment{enu}{%
 \begin{enumerate}%
}{\end{enumerate}}
\theoremstyle{plain}
\newtheorem{lem}{Lemma}[section]
\newtheorem{prop}[lem]{Proposition}
\newtheorem{thm}[lem]{Theorem}
\theoremstyle{definition}
\newtheorem{dfn}[lem]{Definition}
\theoremstyle{remark}
\newtheorem{rem}[lem]{Remark}
\begin{document}

\title[A uniform model for KR crystals III: Nonsymmetric Macdonald polynomials]
{A uniform model for Kirillov--Reshetikhin crystals III: \\ 
 Nonsymmetric Macdonald polynomials at $t=0$ and \\ 
 Demazure characters}
\thanks{2010 Mathematics Subject Classification. 
 Primary: 17B37; Secondary: 17B67, 81R50, 81R10.}

\author[C.~Lenart]{Cristian Lenart}
\address[Cristian Lenart]
 {Department of Mathematics and Statistics, 
  State University of New York at Albany, 
  Albany, NY 12222, U.S.A.}
\email{clenart@albany.edu}
\urladdr{http://www.albany.edu/\~{}lenart/}

\author[S.~Naito]{Satoshi Naito}
\address[Satoshi Naito]
 {Department of Mathematics, Tokyo Institute of Technology,
  2-12-1 Oh-Okayama, Meguro-ku, Tokyo 152-8551, Japan}
\email{naito@math.titech.ac.jp}

\author[D.~Sagaki]{Daisuke Sagaki}
\address[Daisuke Sagaki]
 {Institute of Mathematics, University of Tsukuba, 
  Tsukuba, Ibaraki 305-8571, Japan}
\email{sagaki@math.tsukuba.ac.jp}

\author[A.~Schilling]{Anne Schilling}
\address[Anne Schilling]
 {Department of Mathematics, University of California, One Shields Avenue, 
  Davis, CA 95616-8633, U.S.A.}
\email{anne@math.ucdavis.edu}
\urladdr{http://www.math.ucdavis.edu/\~{}anne}

\author[M.~Shimozono]{Mark Shimozono}
\address[Mark Shimozono]
 {Department of Mathematics, MC 0151, 460 McBryde Hall, Virginia Tech,
  225 Stanger St., Blacksburg, VA 24061 USA}
\email{mshimo@vt.edu}

%
\begin{abstract} 
We establish the equality of the specialization $\Mac{w\lambda}$ of 
the nonsymmetric Macdonald polynomial $\Mact{w\lambda}$ at $t=0$ with the graded character 
$\gch U_{w}^{+}(\lambda)$ of a certain Demazure-type submodule 
$U_{w}^{+}(\lambda)$ of a tensor product of ``single-column'' Kirillov--Reshetikhin 
modules for an untwisted affine Lie algebra, where $\lambda$ is a dominant integral weight and 
$w$ is a (finite) Weyl group element; this generalizes our previous result,
that is, the equality between the specialization $P_{\lambda}(x\,;\,q,\,0)$ of the 
symmetric Macdonald polynomial $P_{\lambda}(x\,;\,q,\,t)$ 
at $t=0$ and the graded character of a tensor product 
of single-column Kirillov--Reshetikhin modules. We also give two combinatorial formulas 
for the mentioned specialization of nonsymmetric Macdonald polynomials: 
one in terms of quantum Lakshmibai--Seshadri paths and the other in terms of the quantum alcove model. 
\end{abstract}%

\maketitle
%
\section{Introduction.}
\label{sec:intro}

In our previous paper \cite{LNSSS2}, we proved that 
the specialization $P_{\lambda}(x\,;\,q,\,0)$ of the 
symmetric Macdonald polynomial $P_{\lambda}(x\,;\,q,\,t)$ at $t=0$ 
is identical to the graded character of a certain tensor product 
of Kirillov--Reshetikhin (KR for short) modules of one-column type 
for an untwisted affine Lie algebra $\Fg_{\af}$, 
where $\lambda$ is a dominant integral weight 
for the finite-dimensional simple Lie algebra $\Fg \subset \Fg_{\af}$.
The purpose of this paper is to generalize this result to 
the specialization $\Mac{w\lambda}$ of 
the nonsymmetric Macdonald polynomial 
$\Mact{w\lambda}$ at $t=0$, where $w$ is an element 
of the (finite) Weyl group $W$ of $\Fg$; 
note that if $w$ is the longest element $\lng$ of $W$, then 
$\Mac{\lng\lambda}=P_{\lambda}(x\,;\,q,\,0)$. 

Let us explain our result more precisely. Let $\Fg$ be
 a finite-dimensional simple Lie algebra (over $\BC$), 
 with $X$ its integral weight lattice, 
and $\Fg_{\af}$ the associated untwisted affine Lie algebra. 
We denote by $\bigl\{\alpha_{i}\bigr\}_{i \in I}$ and 
$\bigl\{\alpha_{i}^{\vee}\bigr\}_{i \in I}$ 
the simple roots and simple coroots of $\Fg$, respectively, 
and by $\vpi_{i}$, $i \in I$, the fundamental weights for $\Fg$. 
For a dominant integral weight $\lambda=\sum_{i \in I} m_{i} \vpi_{i} \in X$
with $m_{i} \in \BZ_{\ge 0}$, let $\QLS(\lambda)$ denote 
the crystal of quantum Lakshmibai-Seshadri (QLS for short) paths 
of shape $\lambda$; for details, see Definition~\ref{dfn:QLS} below. 
Then we know from \cite{LNSSS2} that 
the crystal $\QLS(\lambda)$ provides a realization of the crystal basis of 
the tensor product $\bigotimes_{i \in I} W(\vpi_{i})^{\otimes m_{i}}$ 
of the level-zero fundamental representations $W(\vpi_{i})$, $i \in I$, 
of the quantum affine algebra $U_{q}'(\Fg_{\af})$ associated to $\Fg_{\af}$. 
The main result of \cite{LNSSS2} states that the specialization 
$P_{\lambda}(x\,;\,q,\,0)$ of the symmetric Macdonald polynomial 
at $t=0$ is identical to the graded character of the crystal $\QLS(\lambda)$, 
where the grading on $\QLS(\lambda)$ is given by the degree function, 
or equivalently, by the (global) energy function. 

Let $W=\langle r_{i} \mid i \in I \rangle$ denote the (finite) Weyl group of $\Fg$, 
and set $W_{J}:=\langle r_{i} \mid i \in J \rangle \subset W$, where 
$J:=\bigl\{i \in I \mid \pair{\alpha_{i}^{\vee}}{\lambda}=0 \bigr\}$. 
Also, let $W^{J}$ denote the set of minimal(-length) coset representatives 
for the cosets in $W/W_{J}$; for $w \in W$, 
we denote by $\mcr{w}=\mcr{w}^{J} \in W^J$ the minimal coset representative 
for the coset $w W_J$ in $W/W_J$. Now, for $w \in W^{J}$, we set
\begin{equation*}
\QLS_{w}(\lambda) := 
 \bigl\{ \eta \in \QLS(\lambda) \mid 
   \iota(\eta) \le w 
 \bigr\}, 
\end{equation*}
where for a QLS path 
$\eta = (x_{1},\,\dots,\,x_{s} \,;\, \sigma_{0},\,\sigma_{1},\,\dots,\,\sigma_{s}) 
\in \QLS(\lambda)$, we define the initial direction $\iota(\eta)$ of $\eta$ 
to be $x_{1} \in W^{J}$; here the symbol $\le$ is used to 
denote the Bruhat order on $W$. Furthermore, we define the graded character 
$\gch \QLS_{w}(\lambda)$ of $\QLS_{w}(\lambda) \subset \QLS(\lambda)$ by
\begin{equation*}
\gch \QLS_{w}(\lambda) := \sum_{\eta \in \QLS_{w}(\lambda)} 
  q^{-\Deg(\eta)} e^{\wt (\eta)}, 
\end{equation*}
where $\wt:\QLS(\lambda) \rightarrow X$ and 
$\Deg:\QLS(\lambda) \rightarrow \BZ_{\le 0}$ denote 
the weight function and the degree function on $\QLS(\lambda)$, respectively; 
for the definitions, see \eqref{eq:wt} and \eqref{eq:Deg} below. 
Now, the main result of this paper is as follows.
%
%
\begin{thm} \label{thm:Mac0}
For each $w \in W^{J}$, the equality
\begin{equation*}
\gch \QLS_{w}(\lambda) = \Mac{w\lambda}
\end{equation*}
holds, where $\Mac{w\lambda}$ denotes the specialization of 
the nonsymmetric Macdonald polynomial $\Mact{w\lambda}$ at $t=0$. 
\end{thm}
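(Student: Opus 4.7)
I plan to establish Theorem~\ref{thm:Mac0} by combining the Ram-Yip/Orr-Shimozono combinatorial formula for $E_{w\lambda}(x;q,t)$ with the weight- and degree-preserving bijection, established in our earlier work, between the quantum alcove model and the crystal $\QLS(\lambda)$.

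\emph{First}, I would specialize the Ram-Yip-type formula at $t = 0$. The Orr-Shimozono extension to arbitrary untwisted affine type writes $E_{w\lambda}(x;q,t)$ as a sum over decorated alcove paths starting from the alcove determined by $w$ (and of type encoded by $\lambda$), with weights built from $q$ and $t$. Setting $t = 0$ annihilates all but the admissible objects of the quantum alcove model, and on the surviving terms the $q$-statistic collapses to the (negative of the) degree $\Deg$. The resulting identity has the shape
\[
E_{w\lambda}(x;q,0) \;=\; \sum_{A \in \mathcal{S}_w} q^{-\Deg(A)}\, e^{\wt(A)}
\]
for an explicit family $\mathcal{S}_w$ of admissible subsets living in $\QBw$ (or in $\QBL$ cut out by an initial-direction constraint depending on $w$).

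\emph{Second}, I would apply the bijection between the quantum alcove model and $\QLS(\lambda)$ used already in \cite{LNSSS2} for the symmetric case. This transports the sum above to a sum over a corresponding subset of QLS paths, and the remaining task is to identify this subset with $\QLS_w(\lambda)$. A natural route is induction on $\ell(w)$ in $W^J$: extending $w$ on the right by a simple reflection $s_i$ (with $s_iw \in W^J$ and $\ell(s_iw) > \ell(w)$) corresponds on the Macdonald side to applying a $t=0$ Demazure-type operator, and on the QLS side to enlarging the indexing set by the appropriate Kashiwara-operator orbits. The base case $w = e$ reduces to $\QLS_e(\lambda) = \{\eta_\lambda\}$ and $E_\lambda(x;q,0) = e^\lambda$, while the terminal case $w = \lng$ recovers \cite{LNSSS2}.

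\emph{The main obstacle} lies in this identification step: matching the initial combinatorial data of an Orr-Shimozono alcove path to the initial direction $\iota(\eta) \in W^J$ of its image under the bijection. While the bijection is known to preserve $\wt$ and $\Deg$, its interaction with the "head" of a path — and in particular with the parabolic Bruhat order on $W^J$ that defines $\QLS_w(\lambda)$ — requires a separate, delicate analysis, likely via a local exchange argument handling one simple reflection at a time along a reduced expression for $w$. The symmetric case of \cite{LNSSS2} provides a consistency check at the extreme $w = \lng$ but does not supply the initial-direction bookkeeping for general $w$; that bookkeeping is the real technical heart of the proof.
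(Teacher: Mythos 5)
Your outline matches the paper's opening moves: specialize the Orr--Shimozono formula at $t=0$, then transport via the weight- and degree-preserving bijection between the quantum alcove model and $\QLS(\lambda)$. You also correctly single out the initial-direction bookkeeping as the technical crux. But your proposed resolution of that crux — induction on $\ell(w)$ — fails at the base case as stated. You claim $\QLS_e(\lambda) = \{\eta_\lambda\}$ and $E_\lambda(x;q,0) = e^\lambda$; both are false once $\lambda$ is not minuscule. For example, in type $A_1$ with $\lambda = 2\varpi_1$, the path $\eta = (e, r_1\,;\,0,\,1/2,\,1)$ has $\iota(\eta) = e$ (so it lies in $\QLS_e(\lambda)$), $\wt(\eta) = 0$, and $\Deg(\eta) = -1$, and correspondingly $E_\lambda(x;q,0) = e^{2\varpi_1} + q$. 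The subtlety is that quantum edges of $\QB(W^J)$ decrease length, so QLS paths of shape $\lambda$ can perfectly well have initial direction $e$ while involving nontrivial $q$-powers. So the anchor of your induction is missing, and in fact the $w=e$ case already carries all the content of the symmetric/antidominant case reorganized.

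The paper's own proof is not inductive. It exploits the identity $m_{w\lambda} = w\,m_\lambda$ (Lemma~\ref{lem:mxi}) to realize $\QBw$ inside $\QBM$ for a carefully chosen reduced expression of $m_{\lamm}$: one concatenates a reduced expression of $\mcr{\lng}$ (split as $\mcr{\lng} = (\mcr{\lng}w^{-1}) \cdot w$) with a reduced expression of $m_\lambda$, and then prepends to each element of $\QBw$ the Bruhat-edge path from $e$ to $w\mcr{\lng}^{-1}$. The image inside $\QBM_{\lex}$ is then characterized by the single inequality $\ti{\iota}(p) \ge w\mcr{\lng}^{-1}$ (Lemma~\ref{lem:init}), and a change-of-reduced-expression map $\Theta_{R}^{\lex}$ (Proposition~\ref{prop:Theta}), which preserves $\wt$, $\qwt$, and $\ti{\iota}$, moves everything to the lex chain where $\Xi$ lives; finally $\iota(\Xi(p)) = \mcr{\ti{\iota}(p)\lng}$ converts the inequality into $\iota(\Xi(p)) \le w$ (Lemma~\ref{lem:Xi}). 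Your inductive alternative via Demazure operators is genuinely viable — the paper records it in the Appendix (Propositions~\ref{prop:dem} and \ref{prop:demM} and the closing Remark) — but even there it only yields the induction step; the base case $w=e$ must still be established independently, e.g.\ by the bijective argument itself, precisely because it is not the trivial statement you assumed.
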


We should mention that this result generalizes 
\cite[Proposition~7.9]{LNSSS2}, 
since it holds that $\QLS_{\mcr{\lng}}(\lambda) = \QLS(\lambda)$ and 
$\Mac{\mcr{\lng}\lambda}=P_{\lambda}(x\,;\,q,\,0)$, 
where $\lng \in W$ denotes the longest element. On the other hand, 
in Theorems~\ref{thmalc} and \ref{thmalcgen}, we express 
$\Mac{w\lambda}$ in terms of the so-called quantum alcove model \cite{LL}.

In the following, we explain the representation-theoretic meaning of 
Theorem~\ref{thm:Mac0}; see \S\ref{sec:gc} for details. 
Let $V(\lambda)$ denote the extremal weight module of extremal weight $\lambda$ 
over the quantum affine algebra $U_{q}(\Fg_{\af})$ associated to $\Fg_{\af}$, and set 
$V_{w}^{+}(\lambda):=U_{q}^{+}(\Fg_{\af})S_{w}^{\norm}v_{\lambda} \subset V(\lambda)$ 
for $w \in W$, which is the Demazure submodule generated by the extremal weight vector 
$S_{w}^{\norm}v_{\lambda} \in V(\lambda)$ of weight $w\lambda$ 
over the positive part $U_{q}^{+}(\Fg_{\af})$ of $U_{q}(\Fg_{\af})$; 
note that $V_{w}^{+}(\lambda) \subset V_{\lng}^{+}(\lambda)$ for all $w \in W$. 
For $w \in W$, we define $U_{w}^{+}(\lambda)$ 
to be the image of $V_{w}^{+}(\lambda)$ under the canonical projection 
$V_{\lng}^{+}(\lambda) \twoheadrightarrow
V_{\lng}^{+}(\lambda)/Z_{\lng}^{+}(\lambda)$; 
for the definition of $Z_{\lng}^{+}(\lambda)$, see \S\ref{subsec:gch}. 
Then, $U_{\lng}^{+}(\lambda)$ is isomorphic, as a $U_{q}(\Fg)$-module, to
the tensor product $\bigotimes_{i \in I} W(\vpi_{i})^{\otimes m_{i}}$ 
of level-zero fundamental representations $W(\vpi_{i})$, $i \in I$; 
note that this is not an isomorphism of $U_{q}^{+}(\Fg_{\af})$-modules.
Because the module $V_{w}^{+}(\lambda)$ is generated 
by the extremal weight vector $S_{w}^{\norm}v_{\lambda} \in V(\lambda)$ 
over $U_{q}^{+}(\Fg_{\af})$, it follows that the module
$U_{w}^{+}(\lambda) \subset U_{\lng}^{+}(\lambda)$ is also generated by 
the image of $S_{w}^{\norm}v_{\lambda}$ over $U_{q}^{+}(\Fg_{\af})$. 
Thus, in a sense, 
we can think of $U_{w}^{+}(\lambda) \subset U_{\lng}^{+}(\lambda)$
as a Demazure-type submodule of $U_{\lng}^{+}(\lambda)$, 
which is isomorphic as a $U_{q}(\Fg)$-module to 
$\bigotimes_{i \in I} W(\vpi_{i})^{\otimes m_{i}}$. 
Also, if we define the graded character 
$\gch U_{w}^{+}(\lambda)$ of $U_{w}^{+}(\lambda)$ by
\begin{equation*}
\gch U_{w}^{+}(\lambda) := 
 \sum_{\gamma \in Q,\,k \in \BZ} 
 \dim U_{w}^{+}(\lambda)_{\lambda-\gamma+k\delta}\,x^{\lambda-\gamma}q^{k},
\end{equation*}
where $Q:=\bigoplus_{i \in I}\BZ \alpha_{i}$ is the root lattice for $\Fg$, 
$\delta$ denotes the null root of $\Fg_{\af}$, and $q:=x^{\delta}$, then we have 
(see Theorem~\ref{thm:gch})
\begin{equation*}
\gch U_{w}^{+}(\lambda) = 
\gch \QLS_{w}(\lambda) \stackrel{\text{Theorem~\ref{thm:Mac0}}}{=} \Mac{w\lambda}.
\end{equation*}

In \S\ref{sec:Mac0}, we give a bijective proof of Theorem~\ref{thm:Mac0}
by making use of the Orr-Shimozono formula for the specialization at $t=0$ of 
nonsymmetric Macdonald polynomials~\cite{OS}.
The outline of our proof is as follows. 
In \S\ref{subsec:OS}, we briefly review 
the Orr-Shimozono formula (see Theorem~\ref{thm:OS}), 
which expresses the specialization $\Mac{\mu}$ of 
the nonsymmetric Macdonald polynomial $\Mact{\mu}$ at $t=0$ 
in terms of the set $\QBX$ of quantum alcove paths
from $e$ to $m_{\mu}$ for an integral weight $\mu$,
where $m_{\mu}$ denotes the element of the (extended) affine
Weyl group that is of minimal length in the coset $t_{\mu} W$,
with $t_{\mu}$ the translation by $\mu$.
Next, for a dominant integral weight $\lambda \in X$, 
we show in Lemma~\ref{lem:A-QB} that there exists a canonical bijection 
between the particular set $\QBM_{\lex}$ and the set $\CA(-\lamm)$; 
here, $\QBM_{\lex}$ is defined by using a specific reduced expression 
for $m_{\lamm} = t_{\lamm}$ corresponding to 
a lexicographic $(-\lamm)$-chain of roots. 
%
Also, we give an explicit bijection 
$\Xi:\QBM_{\lex} \rightarrow \QLS(\lambda)$ 
in such a way that the diagram below 
is commutative (see Proposition~\ref{prop:Xi}). 
Furthermore, in Lemma~\ref{lem:embed} 
combined with Proposition~\ref{prop:Theta}, 
we show that there exists a natural embedding 
$\QBw \hookrightarrow \QBM_{\lex}$ for an arbitrary $w \in W^{J}$. 
\begin{equation*}
\begin{diagram}
\node{\QBw \quad} 
\arrow{e,b}{\begin{subarray}{c}
  \text{Embedding} \\
  \text{(Lemma~\ref{lem:embed}} \\
  \text{and Proposition~\ref{prop:Theta})}
  \end{subarray}
  }
\node{\quad \QBM_{\lex} \quad} 
\arrow{e,b}{\begin{subarray}{c}
  \text{Bijection} \\
  \text{(Lemma~\ref{lem:A-QB})}
  \end{subarray}
  }
\arrow{se,b}{\begin{subarray}{c}
  \text{Bijection $\Xi$} \\
  \text{(Proposition~\ref{prop:Xi})}
  \end{subarray}
  }
\node{\CA(-\lamm)} 
\arrow{s,r}{\begin{subarray}{c}
  \text{Bijection $\Pi$} \\
  \text{(\cite[\S8.1]{LNSSS2})}
  \end{subarray}
  } \\
\node{} \node{}
\node{\QLS(\lambda)}
\end{diagram}
\end{equation*}
Finally, in Proposition~\ref{prop:embed} and Lemma~\ref{lem:Xi}, 
we show that the image of $\QBw$ under the composite of the maps 
$\QBw \hookrightarrow \QBM_{\lex} \stackrel{\Xi}{\longrightarrow} \QLS(\lambda)$
is identical to $\QLS_{w}(\lambda)$; we also show in 
Proposition~\ref{prop:Theta}, Lemma~\ref{lem:embed}, and Proposition~\ref{prop:Xi} that 
both of the embedding $\QBw \hookrightarrow \QBM_{\lex}$ and 
the bijection $\Xi:\QBM \rightarrow \QLS(\lambda)$ 
preserve ``weights'' and ``degrees''. 
This implies that the graded character of $\QLS_{w}(\lambda)$ is 
identical to that of $\QBw$. Because we know from the Orr-Shimozono formula
that the graded character of $\QBw$ is identical to $\Mac{w\lambda}$, 
we conclude from the above that the graded character of 
$\QLS_{w}(\lambda)$ is identical to $\Mac{w\lambda}$. 

In Appendix~\ref{sec:rec-gch}, using the crystal structure on the set $\QLS(\lambda)$, 
we obtain a recursive formula (see Proposition~\ref{prop:dem}) 
for the graded characters $\gch \QLS_{w}(\lambda)$, $w \in W^{J}$, 
which is described in terms of Demazure operators. 
Here we note that in view of Theorem~\ref{thm:Mac0} above, this recursive formula
is equivalent to the one (see Proposition~\ref{prop:demM}) 
for nonsymmetric Macdonald polynomials $\Mac{w\lambda}$, $w \in W^{J}$, 
specialized at $t=0$; in Appendix~\ref{subsec:demM-Mac0}, 
we provide a sketch of how to derive this recursive formula for 
$\Mac{w\lambda}$ by using the polynomial representation of 
the double affine Hecke algebra.

\subsection*{Acknowledgments}
C.L. was partially supported by the NSF grant DMS--1362627. 
S.N. was partially supported by Grant-in-Aid for Scientific Research (C), No. 24540010, Japan.
D.S. was partially supported by Grant-in-Aid for Young Scientists (B), No. 23740003, Japan. 
A.S. was partially supported by NSF grants OCI--1147247 and DMS--1500050.
M.S. was partially supported by the NSF grant DMS--1200804.
%
%
\section{Proof of Theorem~\ref{thm:Mac0}.}
\label{sec:Mac0}
%
%
\subsection{Setting.}
\label{subsec:setting}

Let $\Fg$ be a finite-dimensional simple Lie algebra (over $\BC$). 
We denote by $\bigl\{\alpha_{i}\bigr\}_{i \in I}$ and 
$\bigl\{\alpha_{i}^{\vee}\bigr\}_{i \in I}$ 
the simple roots and simple coroots of $\Fg$, respectively, 
and by $\vpi_{i}$, $i \in I$, the fundamental weights for $\Fg$; 
we set
\begin{equation*}
Q:=\bigoplus_{i \in I} \BZ\alpha_{i}, \qquad
Q^{\vee}:=\bigoplus_{i \in I} \BZ \alpha_{i}^{\vee}, 
\quad \text{and} \quad
X:=\bigoplus_{i \in I} \BZ \vpi_{i}.
\end{equation*}
Let $\Phi^{+}$ (resp., $\Phi^{\vee+}$) denote
the set of positive roots (resp., coroots), and 
$\Phi^{-}$ (resp., $\Phi^{\vee-}$) 
the set of negative roots (resp., coroots). 
We set $\rho:=(1/2) \sum_{\alpha \in \Phi^{+}}\alpha$.
Let $W=\langle r_{i} \mid i \in I \rangle$ be 
the (finite) Weyl group of $\Fg$, 
with length function $\ell:W \rightarrow \BZ_{\ge 0}$;
we denote by $\lng \in W$ the longest element, 
and by $e \in W$ the identity element. Also, let us 
denote by $\omega:I \rightarrow I$ the Dynkin diagram automorphism 
given by: $w_{\circ}\alpha_{i} = -\alpha_{\omega(i)}$ for $i \in I$. 

For a subset $J \subset I$, we set 
\begin{align*}
& \Phi_{J}^{+}:=\Phi^{+} \cap \biggl(\bigoplus_{i \in J} \BZ\alpha_{i}\biggr), & & 
  \rho_{J}:=\frac{1}{2}\sum_{\alpha \in \Phi_{J}^{+}} \alpha, \\[1mm]
& \Phi_{J}^{\vee+}:=\Phi^{\vee+} \cap \biggl(\bigoplus_{i \in J} \BZ\alpha_{i}^{\vee}\biggr), & & 
  W_{J}:=\langle r_{i} \mid i \in J \rangle \subset W;
\end{align*}
let $\lngJ$ denote the longest element of $W_{J}$. 
Also, let $W^J$ denote the set of minimal(-length) coset 
representatives for the cosets in $W / W_J$; recall that
%
%
\begin{equation} \label{eq:mcrs}
W^{J}=\bigl\{ w \in W \mid 
  \text{$w\alpha \in \Phi^{+}$ for all $\alpha \in \Phi_{J}^{+}$} \bigr\}, 
\end{equation}
\begin{equation}
\ell(wz) = \ell(w) + \ell(z) \qquad 
 \text{for all $w \in W^{J}$ and $z \in W_{J}$}.
\end{equation}
For $w \in W$, we denote by $\mcr{w}=\mcr{w}^{J} \in W^J$ 
the minimal coset representative for the coset $w W_J$ in $W/W_J$.
We use the symbol $\le$ for the Bruhat order on the Weyl group $W$.
%
%
\subsection{Quantum Lakshmibai-Seshadri paths.}
\label{sec:QLS}
In this subsection, we recall the definition of quantum Lakshmibai-Seshadri paths 
from \cite[\S3]{LNSSS2}. 
%
%
\begin{dfn}\label{def:QB}
Let $J$ be a subset of $I$.
The (parabolic) quantum Bruhat graph $\QB(W^{J})$ is 
the $(\Phi^{+} \setminus \Phi_{J}^{+})$-labeled, 
directed graph with vertex set $W^J$ and 
$(\Phi^{+} \setminus \Phi_{J}^{+})$-labeled, directed edges 
of the following form: 
$w \edge{\beta} \mcr{wr_{\beta}}$ 
for $w \in W^{J}$ and $\beta \in \Phi^{+} \setminus \Phi_{J}^{+}$, 
where either

(i) $\ell(\mcr{wr_{\beta}})=\ell(w)+1$, or 

(ii) $\ell(\mcr{wr_{\beta}})=\ell(w)-2\pair{\beta^{\vee}}{\rho-\rho_{J}}+1$;

\noindent
if (i) holds (resp., (ii) holds), then the edge is called a Bruhat edge 
(resp., a quantum edge). If $J$ is the empty set $\emptyset$, then 
we simply write $\QB(W^{J}) = \QB(W^{\emptyset})$ as $\QB(W)$. 
\end{dfn}
%
%
\begin{rem} \label{rem:QB}
(1) We have $\pair{\beta^{\vee}}{\rho-\rho_{J}} > 0$ 
for all $\beta \in \Phi^{+} \setminus \Phi_{J}^{+}$. 
Indeed, since $\pair{\alpha_{i}^{\vee}}{\alpha} \le 0$ 
for all $i \in I \setminus J$ and $\alpha \in \Phi_{J}^{+}$, 
we see that $\pair{\alpha_{i}^{\vee}}{\rho_{J}} \le 0$ for all 
$i \in I \setminus J$, and hence 
$\pair{\alpha_{i}^{\vee}}{\rho-\rho_{J}} > 0$ for all $i \in I \setminus J$. 
Also, we have $\pair{\alpha_{i}^{\vee}}{\rho-\rho_{J}} = 1 -1 = 0$ for all $i \in J$. 
Therefore, $\pair{\beta^{\vee}}{\rho-\rho_{J}} > 0$ 
for all $\beta \in \Phi^{+} \setminus \Phi_{J}^{+}$. 
As a consequence, if $w \edge{\beta} \mcr{wr_{\beta}}$ is a quantum edge, 
then $\ell(\mcr{wr_{\beta}}) < \ell(w)$. 

(2) If $w \edge{\beta} \mcr{wr_{\beta}}$ is a Bruhat edge, 
then $wr_{\beta} \in W^{J}$, and hence $\mcr{wr_{\beta}} = wr_{\beta}$ 
(see \cite[Remark~3.1.2]{LNSSS3}). 

(3) Let $x,\,y \in W^{J}$ be such that $x \le y$ in the Bruhat order on $W$. If 
%
%
\begin{equation} \label{eq:dpA}
x=x_{0} \edge{\beta_{1}} x_{1} \edge{\beta_{2}} \cdots 
\edge{\beta_{k}} x_{k}=y
\end{equation}
is a shortest directed path from $x$ to $y$ 
in $\QB(W^{J})$, then all of its edges are Bruhat edges. Indeed, 
by Definition~\ref{def:QB} (for Bruhat edges) and part (1) of this remark 
(for quantum edges), we have
\begin{equation} \label{eq:length}
\ell(y) - \ell(x) = \sum_{q=1}^{k} 
 (\ub{\ell(x_{q})-\ell(x_{q-1})}{=1 \text{ or } < 0}) 
 \le \sum_{q=1}^{k} 1 = k; 
\end{equation}
note that the equality holds if and only if 
$\ell(x_{q})-\ell(x_{q-1})=1$ for all $1 \le q \le k$, 
or equivalently, all the edges are Bruhat edges. 
Since $x \le y$ by the assumption, 
we deduce from the chain property (see \cite[Theorem~2.5.5]{BB}) 
that there exists a directed path from $x$ to $y$ in $\QB(W^{J})$ 
all of whose edges are Bruhat edges; the length of this directed path 
is equal to $\ell(y)-\ell(x)$. Therefore, we obtain $k \le \ell(y)-\ell(x)$ 
since the directed path \eqref{eq:dpA} is a shortest one. 
Combining this inequality and \eqref{eq:length}, we obtain 
$k=\ell(y)-\ell(x)$, and hence all the edges 
in the shortest directed path \eqref{eq:dpA} are Bruhat edges.
\end{rem}

Now, we fix a dominant integral weight $\lambda \in X$ for $\Fg$, and set 
\begin{equation*}
J=J_{\lambda}:=\bigl\{i \in I \mid \pair{\alpha_{i}^{\vee}}{\lambda}=0\bigr\} \subset I.
\end{equation*}
As above, we simply write $\mcr{w}^{J}=\mcr{w}^{J_{\lambda}} \in W^{J}$ 
for $w \in W$ as $\mcr{w}$, unless stated otherwise explicitly.
%
%
\begin{dfn} \label{dfn:achain}
For a given rational number $\sigma$, we define $\QB_{\sigma\lambda}(W^{J})$ to be the subgraph
of the parabolic quantum Bruhat graph $\QB(W^{J})$ with the same vertex set but 
having only the edges: 
\begin{equation*}
w \edge{\beta} \mcr{wr_{\beta}} \quad \text{with} \quad 
\pair{\beta^{\vee}}{\sigma\lambda}=\sigma \pair{\beta^{\vee}}{\lambda} \in \BZ.
\end{equation*}
\end{dfn}
%
%
\begin{dfn} \label{dfn:QLS}
A {quantum Lakshmibai-Seshadri} (QLS for short) {path} of shape $\lambda$ is a pair
%
%
\begin{equation} \label{eq:QLS}
\eta=(x_{1},\,x_{2},\,\dots,\,x_{s}\,;\
\sigma_{0},\,\sigma_{1},\,\dots,\,\sigma_{s})
\end{equation}
of a sequence $x_{1},\,x_{2},\,\dots,\,x_{s}$ of elements 
in $W^{J}$ with $x_{u} \ne x_{u+1}$ for $1 \le u \le s-1$ and 
a sequence $0 = \sigma_{0} < \sigma_{1} < \cdots < \sigma_{s} = 1$ 
of rational numbers satisfying the condition that there exists 
a directed path from $x_{u+1}$ to $x_{u}$ in $\QB_{\sigma_{u}\lambda}(W^{J})$ 
for each $1 \le u \le s-1$; we denote this $x_u \blarrl{\sigma_u\lambda} x_{u+1}$. 
Let $\QLS(\lambda)$ denote the set of all QLS paths of shape $\lambda$. 
\end{dfn}
%
%
\begin{rem} \label{rem:QLS}
We identify $\eta \in \QLS(\lambda)$ of the form \eqref{eq:QLS} 
with the following piecewise-linear, continuous map 
$\eta:[0,1] \rightarrow \BR \otimes_{\BZ} X$: 
%
%
\begin{equation} \label{eq:path}
\eta(t)=\sum_{p=1}^{u-1}
(\sigma_{p}-\sigma_{p-1})x_{p}\lambda+ 
(t-\sigma_{u-1})x_{u}\lambda \quad 
\text{for $\sigma_{u-1} \le t \le \sigma_{u}$, $1 \le u \le s$}.
\end{equation}
In \cite[Theorem~3.3]{LNSSS2}, we proved that 
$\QLS(\lambda)$ is identical (as a set of piecewise-linear, 
continuous maps from $[0,1]$ to $\BR \otimes_{\BZ} X$) 
to the set $\BB(\lambda)_{\cl}$ of ``projected'' Lakshmibai-Seshadri 
paths of shape $\lambda$; for the definition 
of $\BB(\lambda)_{\cl}$, see \cite[\S2.2]{LNSSS2}. 
\end{rem}

Let $\eta = (x_{1},\,\dots,\,x_{s}\,;\,\sigma_{0},\,\sigma_{1},\,\dots,\,\sigma_{s}) 
\in \QLS(\lambda)$. We define the weight $\wt(\eta)$ of $\eta \in \QLS(\lambda)$ by 
%
%
\begin{equation} \label{eq:wt}
\wt (\eta) := \eta(1) =\sum_{u=1}^{s}(\sigma_{u}-\sigma_{u-1})x_{u}\lambda; 
\end{equation}
we can show in exactly the same way as \cite[Lemma~4.5\,a)]{L} that 
$\wt(\eta) \in X$. 
Also, we define the degree $\Deg(\eta)$ as follows
(see \cite[\S4.2 and Theorem~4.6]{LNSSS2}). First, let $x,\,y \in W^{J}$, and let 
\begin{equation*}
x=y_{0} \edge{\beta_{1}} y_{1} \edge{\beta_{2}} \cdots 
\edge{\beta_{k}} y_{k}=y
\end{equation*}
be a shortest directed path from $x$ to $y$ in $\QB(W^{J})$. 
Then we set
\begin{equation}\label{defwtlam}
\wt_{\lambda}(x \Rightarrow y):=
\sum_{ 
   \begin{subarray}{c}
   1 \le p \le k \\[1.5mm] 
   \text{$y_{p-1} \stackrel{\beta_{p}}{\rightarrow} y_{p}$ is a quantum edge} 
   \end{subarray}
} \pair{\beta_{p}^{\vee}}{\lambda} \in \BZ_{\ge 0};
\end{equation}
we see from \cite[Proposition~4.1]{LNSSS2} that 
this value does not depend on the choice of a shortest directed path 
from $x$ to $y$ in  $\QB(W^{J})$. For $\eta = 
(x_{1},\,\dots,\,x_{s}\,;\,\sigma_{0},\,\sigma_{1},\,\dots,\,\sigma_{s}) 
\in \QLS(\lambda)$, we define
%
%
\begin{equation} \label{eq:Deg}
\Deg(\eta):=-\sum_{u=1}^{s-1} (1-\sigma_{u}) 
   \wt_{\lambda}(x_{u+1} \Rightarrow x_{u}) \in \BZ_{\le 0}. 
\end{equation}

For $\eta = (x_{1},\,\dots,\,x_{s}\,;\,\sigma_{0},\,\sigma_{1},\,\dots,\,\sigma_{s}) 
\in \QLS(\lambda)$, we set $\iota(\eta):=x_{1} \in W^{J}$, and 
call it the initial direction of $\eta$. 
Now, for each $w \in W^{J}$, we set
\begin{equation}
\QLS_{w}(\lambda):=\bigl\{ \eta \in \QLS(\lambda) \mid 
\iota(\eta) \le w \bigr\},
\end{equation}
and define the graded character 
$\gch \QLS_{w}(\lambda)$ of $\QLS_{w}(\lambda) \subset \QLS(\lambda)$ by
\begin{equation*}
\gch \QLS_{w}(\lambda) := \sum_{\eta \in \QLS_{w}(\lambda)} 
  q^{-\Deg(\eta)} e^{\wt (\eta)}. 
\end{equation*}
We will prove that for each $w \in W^{J}$, the equality
\begin{equation}
\gch \QLS_{w}(\lambda) = \Mac{w\lambda}
\end{equation}
holds, where $\Mac{w\lambda}$ denotes the specialization of 
the nonsymmetric Macdonald polynomial $\Mact{w\lambda}$ at $t=0$. 

%
\subsection{Orr-Shimozono formula.}
\label{subsec:OS}

In this subsection, we review a formula (\cite[Corollary~4.4]{OS}) for 
the specialization at $t=0$ of nonsymmetric Macdonald polynomials. 

Let $\ti{\Fg}$ denote the dual Lie algebra of $\Fg$, and 
let $\bigl\{\ti{\alpha}_{i}\bigr\}_{i \in I}$ and 
$\bigl\{\ti{\alpha}_{i}^{\vee}\bigr\}_{i \in I}$ be 
the simple roots and the simple coroots of $\ti{\Fg}$, respectively. 
We denote by $\ti{W}$ the Weyl group of $\ti{\Fg}$; 
note that $W \cong \ti{W}$. As is well-known, 
for $w \in W \cong \ti{W}$ and $i \in I$, 
\begin{equation} \label{eq:iden2}
w \ti{\alpha}_{i} = \sum_{j \in I} c_{j}\ti{\alpha}_{j}
\quad \text{if and only if}  \quad
w \alpha_{i}^{\vee} =
\sum_{j \in I} c_{j}\alpha_{j}^{\vee}.
\end{equation}
Hence we identify $w\ti{\alpha}_{i}$ with $w\alpha_{i}^{\vee}$ 
for $w \in W \cong \ti{W}$ and $i \in I$: 
\begin{equation} \label{eq:iden}
w \ti{\alpha}_{i}
\quad \stackrel{\text{identify}}{\longleftrightarrow} \quad
w \alpha_{i}^{\vee}. 
\end{equation}
Let $\ti{\Phi}^{+}$ denote the set of positive roots of $\ti{\Fg}$, 
which we identify with the set $\Phi^{\vee+}$ of 
positive coroots of $\Fg$ by \eqref{eq:iden}. 

Now, let $\ti{\Fg}_{\af}$ denote 
the untwisted affine Lie algebra associated to $\ti{\Fg}$.
Let $\bigl\{\ti{\alpha}_{i}\bigr\}_{i \in I_{\af}}$ 
be the simple roots of $\ti{\Fg}_{\af}$, 
where $I_{\af}=I \sqcup \{0\}$, 
and $\ti{\delta}$ the null root of $\ti{\Fg}_{\af}$. 
We denote by $\ti{\Phi}_{\af}^{+}$ (resp., $\ti{\Phi}_{\af}^{-}$)
the set of positive (resp., negative) real roots of $\ti{\Fg}_{\af}$; 
note that 
\begin{equation*}
\ti{\Phi}_{\af}^{+}=
 \bigl(\underbrace{\BZ_{\ge 0}\ti{\delta}+\ti{\Phi}^{+}}_{%
   \begin{subarray}{c}
   \text{identified with} \\[1mm]
   \BZ_{\ge 0}\ti{\delta}+\Phi^{\vee+}
   \end{subarray}
 } \bigr) \sqcup 
 \bigl(\underbrace{\BZ_{> 0}\ti{\delta}-\ti{\Phi}^{+}}_{%
   \begin{subarray}{c}
   \text{identified with} \\[1mm]
   \BZ_{> 0}\ti{\delta}-\Phi^{\vee+}
   \end{subarray}
 } \bigr).
\end{equation*}
Denote by $\ti{W}_{\af}$ the Weyl group of $\ti{\Fg}_{\af}$; 
note that $\ti{W}_{\af} \cong Q \rtimes \ti{W} \cong Q \rtimes W$. 
Also, we denote by $\ti{W}_{\ext}:=X \rtimes \ti{W} \cong 
X \rtimes W$ the extended affine Weyl group of $\ti{\Fg}_{\af}$, 
and by $t_{\mu} \in \ti{W}_{\ext}$ the translation by $\mu \in X$.
For $x \in \ti{W}_{\ext}$, define $\wt(x) \in X$ and $\dir(x) \in W$ by: 
\begin{equation*}
x = t_{\wt(x)}\dir(x). 
\end{equation*}

For an integral weight $\mu \in X$ for $\Fg$, we set 
\begin{equation*}
m_{\mu}:=t_{\mu}v(\mu)^{-1} \in X \rtimes W \cong \ti{W}_{\ext},
\end{equation*}
where $v(\mu)$ denotes the shortest element in $W$ such that $v(\mu)\mu$ is 
an antidominant integral weight (see \cite[(2.45)]{OS}). 
The following lemma will be used later. 
%
%
\begin{lem} \label{lem:mxi}
Let $\lambda \in X$ be a dominant integral weight, and let $w \in W^{J}$, 
where $J=J_{\lambda}=\bigl\{i \in I \mid \pair{\alpha_{i}^{\vee}}{\lambda}=0\bigr\}$. 
Then, $v(w\lambda)= \mcr{\lng}w^{-1}$, and hence
\begin{equation*}
m_{w\lambda}=t_{w\lambda} (\mcr{\lng}w^{-1})^{-1} = 
w (\mcr{\lng})^{-1} t_{\lng\lambda}. 
\end{equation*}
In particular, 
\begin{equation*}
\begin{cases}
v(\lamm)=v(\mcr{\lng}\lambda)=e, \qquad m_{\lamm}=t_{\lamm}, \\[1.5mm]
v(\lambda) = \mcr{\lng}, \qquad m_{\lambda} = 
(\mcr{\lng})^{-1} t_{\lng\lambda}, 
\end{cases}
\end{equation*}
and $m_{w\lambda}=wm_{\lambda}$. 
\end{lem}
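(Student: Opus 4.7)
The plan is to determine $v(w\lambda)$ directly from its defining property as the shortest element $u \in W$ with $uw\lambda$ antidominant, and then assemble the formulas for $m_{w\lambda}$ using the semidirect product relation $t_\mu g = g\, t_{g^{-1}\mu}$ in $\ti{W}_{\ext} = X \rtimes W$.

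First, I would reformulate the antidominance condition as a coset condition. Since $J = J_\lambda$ is exactly the stabilizer type, $\mathrm{Stab}_W(\lambda) = W_J$, and $\lng\lambda$ is the unique antidominant weight in the $W$-orbit of $\lambda$. Hence $uw\lambda$ is antidominant if and only if $uw\lambda = \lng\lambda$, equivalently $uw \in \lng W_J$, i.e. $u \in \lng W_J w^{-1}$. So the task reduces to finding the shortest element of the set $\{\lng z w^{-1} : z \in W_J\}$.

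Next, I would carry out the length minimization. Applying the identity $\ell(\lng g) = \ell(\lng) - \ell(g)$ for $g \in W$ gives $\ell(\lng z w^{-1}) = \ell(\lng) - \ell(z w^{-1})$, so the problem becomes maximizing $\ell(z w^{-1})$ over $z \in W_J$. Since $w \in W^J$, the inverse $w^{-1}$ is a minimal coset representative for $W_J \backslash W$, so the parabolic length additivity $\ell(z w^{-1}) = \ell(z) + \ell(w^{-1}) = \ell(z) + \ell(w)$ holds for every $z \in W_J$; this is maximized uniquely at $z = \lngJ$. Using $\lng = \mcr{\lng}\lngJ$ (and $\lngJ^2 = e$), the minimizer is $u = \lng \lngJ w^{-1} = \mcr{\lng} w^{-1}$, yielding $v(w\lambda) = \mcr{\lng} w^{-1}$.

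For the formula for $m_{w\lambda}$, I would compute
\begin{equation*}
m_{w\lambda} = t_{w\lambda} (\mcr{\lng} w^{-1})^{-1} = t_{w\lambda}\, w\, \mcr{\lng}^{-1},
\end{equation*}
and apply $t_\mu g = g\, t_{g^{-1}\mu}$ twice: first $t_{w\lambda} w = w\, t_\lambda$, then $t_\lambda \mcr{\lng}^{-1} = \mcr{\lng}^{-1}\, t_{\mcr{\lng}\lambda}$, giving $m_{w\lambda} = w\,\mcr{\lng}^{-1}\, t_{\mcr{\lng}\lambda}$. The identification $\mcr{\lng}\lambda = \lng\lambda$ follows from $\lng = \mcr{\lng}\lngJ$ together with $\lngJ \lambda = \lambda$, which proves the asserted formula.

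The three special cases then fall out by specialization: setting $w = \mcr{\lng}$ yields $v(\lamm) = e$ and $m_{\lamm} = t_{\lamm}$; setting $w = e$ yields $v(\lambda) = \mcr{\lng}$ and $m_\lambda = \mcr{\lng}^{-1} t_{\lng\lambda}$; comparing these with the general formula gives $m_{w\lambda} = w\, m_\lambda$. The only step requiring any care is the length maximization, which rests on the standard parabolic length-additivity for $W_J \cdot {}^JW$, so this proof is essentially mechanical once the coset reformulation is made.
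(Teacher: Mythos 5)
Your proof is correct and follows essentially the same route as the paper: both reduce the problem to showing that $\mcr{\lng}w^{-1}$ is the shortest element of $\lng W_J w^{-1}$, both invoke $\ell(\lng g) = \ell(\lng) - \ell(g)$ together with the parabolic length additivity $\ell(zw^{-1}) = \ell(z) + \ell(w)$ for $z \in W_J$ (which the paper phrases via $\ell(zw^{-1}) = \ell(wz^{-1}) = \ell(w) + \ell(z^{-1})$), and both identify $\mcr{\lng} = \lng\lngJ$ to land on the optimizer.
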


\begin{proof}
It is obvious that $(\mcr{\lng}w^{-1})w\lambda 
= \lng\lambda$ is antidominant. Hence it suffices to show that 
$\ell(x) \ge \ell( \mcr{\lng}w^{-1} )$ 
for all $x \in W$ such that $xw\lambda=\lng\lambda$. 
If $xw\lambda=\lng\lambda$, then $\lng xw \in W_{J}$, and hence 
$x=\lng zw^{-1}$ for some $z \in W_{J}$; note that 
$\ell( zw^{-1} ) = \ell(wz^{-1}) = \ell(w) + \ell(z^{-1})$ since 
$w \in W^{J}$ and $z \in W_{J}$. Therefore, 
\begin{equation*}
\ell(x) = \ell(\lng) - \ell(zw^{-1}) = 
\ell(\lng) - \ell(w) - \ell(z^{-1}). 
\end{equation*}
Here we remark that $\mcr{\lng} = \lng \lngJ$, 
where $\lngJ \in W_{J}$ is the longest element. 
Hence it follows from the computation above 
(with $z$ replaced by $\lngJ$) that 
\begin{equation*}
\ell(\mcr{\lng}w^{-1}) = 
\ell(\lng \lngJ w^{-1}) = 
\ell(\lng) - \ell(w) - \ell(\lngJ^{-1}). 
\end{equation*}
Since $\ell(z^{-1}) \le \ell(\lngJ^{-1})$, we obtain 
$\ell(x) \ge \ell(\mcr{\lng}w^{-1})$, as desired. 
\end{proof}

We fix an arbitrary $\mu \in X$, and apply the argument in \cite[\S3.3]{OS} 
to the case that $u=e$ (the identity element) and $w=m_{\mu}$; 
we generally follow the notation thereof. Let 
%
%
\begin{equation} \label{eq:redw}
m_{\mu} = \pi 
 \underbrace{ r_{i_1} r_{i_2} \cdots r_{i_{\ell}} }_{\in \ti{W}_{\af}}
\end{equation}
be a reduced expression for $m_{\mu}$, 
where $\pi$ is an (affine) Dynkin diagram 
automorphism of $\ti{\Fg}_{\af}$, and set
%
%
\begin{equation} \label{eq:betak}
\beta_{k}^{\OS}
 := r_{i_{\ell}} \cdots r_{i_{k+1}}\ti{\alpha}_{i_{k}}
\quad \text{for $1 \le k \le \ell$}, 
\end{equation}
which is a positive real root of $\ti{\Fg}_{\af}$ 
contained in $\BZ_{> 0} \ti{\delta} - \ti{\Phi}^{+}$ 
(see \cite[Remark 3.17]{OS}). Then we can write $\beta_{k}^{\OS}$ as: 
%
%
\begin{equation} \label{eq:ak}
\beta_{k}^{\OS}=a_{k}\ti{\delta}+\ol{\beta_{k}^{\OS}}
\quad \text{for $a_{k} \in \BZ_{> 0}$ and 
$\ol{\beta_{k}^{\OS}} \in \ti{\Phi}^{-}$, \quad $1 \le k \le \ell$}; 
\end{equation}
we think of $\ol{\beta_{k}^{\OS}}$ as an element of $\Phi^{\vee-}$
under the identification \eqref{eq:iden} 
of $\ti{\Phi}^{+}$ and $\Phi^{\vee+}$, and set 
$\gamma_{k}^{\OS}:=-(\ol{\beta_{k}^{\OS}})^{\vee} \in \Phi^{+}$. 

Let $A=\bigl\{j_{1} < j_{2} < \cdots < j_{r}\bigr\}$
be a subset of $\bigl\{1,\,2,\,\dots,\,\ell\bigr\}$. 
Following \cite[(3.16) and (3.17)]{OS} 
(recall that $u=e$ and $w = m_{\mu}$), 
we set 
\begin{equation*}
z_{0}:=m_{\mu}, \qquad
z_{k}:=z_{k-1}r_{\beta_{j_{k}}^{\OS}} \quad
  \text{for $1 \le k \le r$}; 
\end{equation*}
or equivalently, $z_{0} = m_{\mu}$, and 
$z_{k}$ is obtained from the reduced expression \eqref{eq:redw} 
by removing the $j_{1}$-th reflection, 
the $j_{2}$-th reflection, $\dots$, 
and the $j_{k}$-th reflection. 
We express these data as: 
%
%
\begin{equation} \label{eq:pJ}
p_{A}=\Bigl(
z_{0} \edge{\beta_{j_1}^{\OS}} 
z_{1} \edge{\beta_{j_2}^{\OS}} \cdots 
      \edge{\beta_{j_r}^{\OS}} z_{r}
\Bigr).
\end{equation}
\begin{dfn}[{\cite[\S4.2]{OS}}] \label{dfn:QBX}
Keep the notation and setting above. 
We say that $p_{A}$ is an element of $\QBX$ if 
\begin{equation*}
\dir(z_{0}) \edge{ \gamma_{j_{1}}^{\OS} } 
\dir(z_{1}) \edge{ \gamma_{j_{2}}^{\OS} } \cdots 
\edge{ \gamma_{j_{r}}^{\OS} } \dir(z_{r})
\end{equation*}
is a directed path in 
the quantum Bruhat graph $\QB(W)=\QB(W^{\emptyset})$ for $W$. 
\end{dfn}

For an element $p_{A} \in \QBX$, 
we set (see \cite[(3.19)]{OS})
%
%
\begin{equation} \label{eq:A-}
A^{-}:=
 \bigl\{j_{k} \in A \mid 
 \text{$\dir(z_{k-1}) \edge{\gamma_{j_k}^{\OS}} \dir(z_{k})$ 
     is a quantum edge}
 \bigr\} \subset A, 
\end{equation}
and then set (see \cite[(4.1)]{OS})
%
%
\begin{equation} \label{eq:qwt}
\qwt (p_{A}) : = \sum_{j \in A^{-}} \beta_{j}^{\OS},
\end{equation}
which is contained in 
$\BZ_{> 0}\ti{\delta} - \ti{Q}^{+}$ if $A^{-} \ne \emptyset$, 
where $\ti{Q}^{+}:=\sum_{i \in I} \BZ_{\ge 0} \ti{\alpha}_{i}$. 
Furthermore, in view of equation \eqref{eq:ak}, we set 
(in the notation of \cite[(2.4)]{OS})
%
%
\begin{equation} \label{eq:degr}
\degr(\qwt (p_{A})):=\sum_{j \in A^{-}} a_{j} \in \BZ_{\ge 0}.
\end{equation}
Also, if $p_{A} \in \QBX$ is of the form \eqref{eq:pJ}, then 
we set 
%
%
\begin{equation} \label{eq:wtp}
\enp{A}:=z_{r} \in \ti{W}_{\ext} = X \rtimes W \quad \text{and} \quad 
\wt(p_{A}): = \wt(\enp{A}).
\end{equation} 
%
%
\begin{thm}[{\cite[Corollary~4.4]{OS}}] \label{thm:OS}
Keep the notation and setting above. We have
\begin{equation*}
\Mac{\mu} = \sum_{p \in \QBX} e^{\wt(p)}q^{\degr(\qwt(p))}.
\end{equation*}
\end{thm}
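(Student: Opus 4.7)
The plan is to derive this formula from the Ram--Yip type alcove walk expansion of nonsymmetric Macdonald polynomials in the polynomial representation of the double affine Hecke algebra (DAHA) of $\ti{\Fg}_{\af}$, followed by a careful analysis of the specialization at $t=0$. First, $\Mact{\mu}$ is characterized as the unique simultaneous eigenfunction (normalized by a prescribed leading term) of the Cherednik operators in the polynomial representation $\BC[X]$. Using the Cherednik intertwiners $\tau_{i_k}$ of the DAHA, one builds $\Mact{\mu}$ from the constant $1$ by applying $\pi\, \tau_{i_1}\tau_{i_2}\cdots\tau_{i_\ell}$, where $\pi r_{i_1}\cdots r_{i_\ell}$ is the reduced expression \eqref{eq:redw} for $m_\mu$. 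Each intertwiner $\tau_{i_k}$ splits into two summands (one involving the Hecke generator $T_{i_k}$, the other a rational multiplication in $q,t$), so expanding the product yields a sum over subsets $A=\{j_1<\cdots<j_r\}\subset\{1,\dots,\ell\}$ of ``branch choices'', giving a Ram--Yip type formula with explicit $(q,t)$-rational coefficients attached to each $A$.

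Second, I specialize at $t=0$. The coefficient attached to a subset $A$ factorizes position by position, and each factor has a $t\to 0$ limit of exactly one of three types: it is identically zero, identically $1$, or a monomial $q^{a_j}$ with $a_j$ as in \eqref{eq:ak}. Which case occurs at step $k$ is controlled by (a) whether $k\in A$, and (b) the length change produced by right multiplication of $\dir(z_{k-1})$ by the finite reflection $r_{\gamma_{j_k}^{\OS}}$. The key combinatorial claim is that the surviving subsets are exactly those for which $\dir(z_0),\dir(z_1),\dots,\dir(z_r)$ forms a directed path in the quantum Bruhat graph $\QB(W)$, i.e.\ the elements $p_A$ of $\QBX$ in Definition~\ref{dfn:QBX}. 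Moreover, a retained monomial $q^{a_{j_k}}$ appears at position $j_k$ precisely when $\dir(z_{k-1})\to\dir(z_k)$ is a quantum edge, i.e.\ $j_k\in A^{-}$ in the sense of \eqref{eq:A-}; otherwise the surviving factor is $1$.

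Assembling these contributions yields
\[
\Mac{\mu}=\sum_{p_A\in\QBX} e^{\wt(p_A)}\prod_{j\in A^{-}} q^{a_j}
        =\sum_{p\in\QBX} e^{\wt(p)}\,q^{\degr(\qwt(p))},
\]
by \eqref{eq:wtp} and \eqref{eq:degr}, which is the stated identity. The main obstacle is the detailed case analysis of the $t\to 0$ limit: one must match, position by position, the length-based characterization of Bruhat and quantum edges in $\QB(W)$ from Definition~\ref{def:QB} with the precise conditions under which each factor of the Ram--Yip coefficient has a nonzero (and finite) limit. This relies on the structural fact that $\beta_{j_k}^{\OS}\in\BZ_{>0}\ti{\delta}-\ti{\Phi}^{+}$, so both its finite part $\ol{\beta_{j_k}^{\OS}}$ and its affine component $a_{j_k}>0$ are nontrivial, together with a direct computation relating $2\pair{(\gamma_{j_k}^{\OS})^{\vee}}{\rho}$ to the length change of $\dir(z_{k-1})r_{\gamma_{j_k}^{\OS}}$. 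This is what recovers the dichotomy of Bruhat edges (length increases by $1$) versus quantum edges (the coheight drop in Definition~\ref{def:QB}(ii)) and pins down exactly when the surviving $q$-power equals $a_{j_k}$, matching \eqref{eq:qwt}--\eqref{eq:degr}.
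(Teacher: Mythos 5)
This theorem is stated in the paper purely as a citation of \cite[Corollary~4.4]{OS}; the paper offers no proof of its own, so there is nothing internal to compare your attempt against. Your sketch is, however, a faithful outline of how the result is actually established in the cited reference: Orr and Shimozono do begin from the Ram--Yip type expansion of $\Mact{\mu}$ obtained by expanding $\pi\,\tau_{i_1}\cdots\tau_{i_\ell}\cdot 1$ in the polynomial representation of the DAHA, with each Cherednik intertwiner $\tau_{i_k}$ contributing a two-term branch, and they then take the $t\to 0$ limit term by term. Your identification of the three possible limiting behaviors of each factor (vanishing, unit, or a power $q^{a_j}$), and the claim that the surviving branch choices are precisely those encoding directed paths in $\QB(W)$ with quantum edges producing the $q$-powers, is exactly the mechanism they exploit. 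So the approach is correct and matches the source of the theorem.

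The main caveat is that your proposal leaves the decisive step --- matching the $t\to 0$ dichotomy for each Ram--Yip factor with the length conditions in Definition~\ref{def:QB} --- at the level of a plausible claim. This is genuinely the hard part: one must track the sign of the $t$-exponent in each coefficient, which depends on whether $\ell(\dir(z_{k-1}) r_{\gamma_{j_k}^{\OS}})$ goes up or down, and show that the ``down'' case survives precisely when the drop equals $2\pair{(\gamma_{j_k}^{\OS})^{\vee}}{\rho}-1$, not merely when the length decreases. A naive reading of ``length decreases'' would overcount and include terms whose coefficient actually diverges as $t\to 0$; it is the exact quantum-edge condition in Definition~\ref{def:QB}(ii) that guarantees the limit exists and equals $q^{a_{j_k}}$. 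Your sketch gestures at this (``a direct computation relating $2\pair{(\gamma_{j_k}^{\OS})^{\vee}}{\rho}$ to the length change''), but a complete proof would need to carry out that computation and verify that no other type of length change yields a finite nonzero limit, which is precisely the content of the technical lemmas in \cite{OS} that the present paper chooses not to reproduce.
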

%
%
\subsection{Bijective correspondence between $\QBM$ and $\CA(-\lamm)$.}
\label{subsec:QB-A}

First, we recall the quantum alcove model from \cite{LL} 
(see also \cite[\S5.1]{LNSSS2}). 
We set 
$H_{\alpha,\,n}:=\bigl\{\zeta \in \Fh^{\ast}_{\BR} \mid 
\pair{\alpha^{\vee}}{\zeta}=n \bigr\}$ for $\alpha \in \Phi$ and $n \in \BZ$, 
where $\Fh^{\ast}_{\BR}:=\BR \otimes_{\BZ} X=
\bigoplus_{i \in I} \BR\alpha_{i}$. 
An alcove is, by definition, a connected component 
(with respect to the usual topology on $\Fh^{\ast}_{\BR}$) of 
\begin{equation*}
\Fh^{\ast}_{\BR} \setminus 
 \bigcup_{\alpha \in \Phi^{+},\,n \in \BZ} H_{\alpha,\,n}.
\end{equation*}
We say that two alcoves are adjacent if they are distinct and 
have a common wall. For adjacent alcoves $A$ and $B$, 
we write $A \edge{\alpha} B$, with $\alpha \in \Phi$, 
if their common wall is contained in the hyperplane
$H_{\alpha,\,n}$ for some $n \in \BZ$, and if $\alpha$ points 
in the direction from $A$ to $B$. An alcove path is a sequence of 
alcoves $(A_{0},\,A_{1},\,\dots,\,A_{s})$ such that $A_{u-1}$ and $A_{u}$ 
are adjacent for each $u=1,\,2,\,\dots,\,s$. We say that 
$(A_{0},\,A_{1},\,\dots,\,A_{s})$ is reduced if it has minimal length
among all alcove paths from $A_{0}$ to $A_{s}$. 

Recall that $\ti{W}_{\ext} \cong X \rtimes W$ 
acts (as affine transformations) on $\Fh^{\ast}_{\BR}$ by
\begin{equation*}
(t_{\xi}w) \cdot \zeta = w\zeta + \xi \qquad 
\text{for $\xi \in X$, $w \in W$, and $\zeta \in \Fh^{\ast}_{\BR}$}. 
\end{equation*}
%
%
\begin{rem} \label{rem:refl}
For $\beta = \alpha^{\vee} + n\ti{\delta} \in \ti{\Phi}_{\af}^{+}$ with 
$\alpha \in \Phi^{+}$ and $n \in \BZ_{\ge 0}$ 
(here we identify $\ti{\Phi}^{+}$ with $\Phi^{\vee+}$ under \eqref{eq:iden}), 
we have
$r_{\alpha^{\vee}+n\ti{\delta}} \cdot \zeta = 
(t_{-n\alpha}r_{\alpha^{\vee}}) \cdot \zeta = 
r_{\alpha^{\vee}}\zeta-n\alpha = r_{\alpha}\zeta-n\alpha$ 
for $\zeta \in \Fh^{\ast}_{\BR}$. 
Hence $r_{\alpha^{\vee}+n\ti{\delta}} \in \ti{W}_{\ext}$ 
acts on $\Fh^{\ast}_{\BR}$ as 
the affine reflection with respect to the hyperplane 
$H_{\alpha,\,-n} = H_{-\alpha,\,n}$. 
\end{rem}

Now, let $\lambda \in X$ be a dominant integral weight; 
note that $\lng\lambda \in X$ is antidominant, 
where $\lng \in W$ denotes the longest element. We set 
\begin{equation*}
A_{\circ}:=\bigl\{
 \zeta \in \Fh^{\ast}_{\BR} \mid 
  \text{$0 < \pair{\alpha^{\vee}}{\zeta} < 1$ for all $\alpha \in \Phi^{+}$}
 \bigr\},
\end{equation*}
and $A_{\lamm}:=A_{\circ}+\lamm$. 
\begin{dfn}
The sequence of roots $(\gamma_{1},\,\gamma_{2},\,\dots,\,\gamma_{\ell})$ is 
called a $(-\lng\lambda)$-chain of roots if 
\begin{equation*}
A_{\circ}=A_{0} \edge{-\gamma_{1}} A_{1} 
\edge{-\gamma_{2}} \cdots \edge{-\gamma_{\ell}} A_{\ell}=A_{\lamm}
\end{equation*}
is a reduced alcove path. 
\end{dfn}

Here we note that $m_{\lamm}=t_{\lamm}$ by Lemma~\ref{lem:mxi}. 
It follows from \cite[Lemma~5.3]{LP1} that 
there exists a bijection: 
%
%
\begin{equation} \label{eq:1to1}
\bigl\{ \text{reduced expressions for $m_{\lamm}=t_{\lamm}$} \bigr\}
\quad \stackrel{\text{1:1}}{\longleftrightarrow} \quad 
\bigl\{ \text{$(-\lamm)$-chains of roots} \bigr\}.
\end{equation}
More precisely, let $m_{\lamm}=t_{\lamm}=\pi r_{i_1}r_{i_2} \cdots r_{i_{\ell}}$ 
be a reduced expression for $m_{\lamm}=t_{\lamm} \in \ti{W}_{\ext}$. We set
$A_{k}:=(\pi r_{i_1}r_{i_2} \cdots r_{i_k}) \cdot A_{\circ}$ 
for $0 \le k \le \ell$, and 
%
%
\begin{equation} \label{eq:betaL0}
\beta_{k}^{\Le}:=\pi r_{i_1} \cdots r_{i_{k-1}}(\ti{\alpha}_{i_k}) = 
r_{\p{1}} \cdots r_{\p{k-1}}(\ti{\alpha}_{\p{k}})
\quad 
\text{for $1 \le k \le \ell$}; 
\end{equation}
note that $\beta_{k}^{\Le}$ is a positive real root of 
$\ti{\Fg}_{\af}$ contained in $\BZ_{\ge 0}\ti{\delta}+\ti{\Phi}^{+}$.
In fact, by \cite[(2.4.7)]{Mac}, we have
\begin{align}
\bigl\{ \beta_{k}^{\Le} \mid 1 \le k \le \ell \bigr\} 
  & = \ti{\Phi}_{\af}^{+} \cap m_{\lamm}\ti{\Phi}_{\af}^{-} 
    = \ti{\Phi}_{\af}^{+} \cap t_{\lamm}\ti{\Phi}_{\af}^{-} \nonumber \\
  & = \bigl\{  b \ti{\delta} + \beta^{\vee} \mid 
      \beta \in \Phi^{+},\,0 \le b < -\pair{\beta^{\vee}}{\lamm} \bigr\} \label{eq:inv}
\end{align}
under the identification \eqref{eq:iden} 
of $\ti{\Phi}^{+}$ and $\Phi^{\vee+}$. 
Therefore, we can write $\beta_{k}^{\Le}$ in the form
%
%
\begin{equation} \label{eq:bk}
\beta_{k}^{\Le} = b_{k}\ti{\delta}+\ol{\beta_{k}^{\Le}}, \ 
\text{
  with $b_{k} \in \BZ_{\ge 0}$ and 
  $\ol{\beta_{k}^{\Le}} \in -\lng\bigl(\Phi^{\vee+} \setminus \Phi^{\vee+}_{J}\bigr)$}, 
\end{equation}
for each $1 \le k \le \ell$. If we set $\gamma_{k}^{\Le}:=
(\ol{\beta_{k}^{\Le}})^{\vee} \in -\lng\bigl(\Phi^{+} \setminus \Phi^{+}_{J}\bigr)$, then
%
%
\begin{equation} \label{eq:chain1}
A_{\circ}=A_{0} \edge{-\gamma_{1}^{\Le}} A_{1} 
\edge{-\gamma_{2}^{\Le}} \cdots \edge{-\gamma_{\ell}^{\Le}} A_{\ell}=A_{\lamm}
\end{equation}
is a $(-\lamm)$-chain of roots. 
%
%
\begin{rem}[{see \cite[\S6.1]{LNSSS2}}] \label{rem:bk}
Let $1 \le k \le \ell$. We see from Remark~\ref{rem:refl} that 
the action of $r_{\beta_{k}^{\Le}} \in \ti{W}_{\af}$ on $\Fh^{\ast}_{\BR}$ is 
the affine reflection with respect to 
the hyperplane $H_{\gamma_{k}^{\Le},\,-b_{k}}$.
Also, we know that 
%
%
\begin{equation} \label{eq:bk2}
0 \le b_{k}=\# \bigl\{ 1 \le p < k \mid \gamma_{p}^{\Le} = \gamma_{k}^{\Le} \bigr\} < 
\pair{\ol{\beta_{k}^{\Le}}}{-\lamm}; 
\end{equation}
the sequence $(b_{1},\,\dots,\,b_{\ell})$ is 
called the height sequence 
for the $(-\lamm)$-chain \eqref{eq:chain1}. 
\end{rem}
%
%
\begin{rem} \label{rem:betaLPOS}
Keep the notation and setting above. 
If we define $\beta_{k}^{\OS}$, $1 \le k \le \ell$, 
by \eqref{eq:betak} for the reduced expression 
$m_{\lamm}=t_{\lamm}=\pi r_{i_1}r_{i_2} \cdots r_{i_{\ell}}$, then we have
$\beta_{k}^{\Le}=-t_{\lamm}(\beta_{k}^{\OS})$ for all $1 \le k \le \ell$. 
In particular, $\ol{\beta_{k}^{\Le}} = - \ol{\beta_{k}^{\OS}}$
(see \eqref{eq:ak} and \eqref{eq:bk}), 
and hence $\gamma_{k}^{\Le}=\gamma_{k}^{\OS}=:\gamma_{k}$. 
Also, we have $b_{k}=\pair{\gamma_{k}^{\vee}}{-\lamm}-a_{k}$.
\end{rem}

Now, let 
%
%
\begin{equation} \label{eq:chain}
A_{\circ}=A_{0} \edge{-\gamma_{1}} A_{1} 
\edge{-\gamma_{2}} \cdots \edge{-\gamma_{\ell}} A_{\ell}=A_{\lamm}
\end{equation}
be a $(-\lamm)$-chain of roots. 
%
%
\begin{dfn} \label{dfn:QA}
Let $\CA(-\lamm)$ denote the set of all subsets 
$A=\bigl\{j_{1} < \cdots < j_{r}\bigr\}$ of 
$\bigl\{1,\,2,\,\dots,\,\ell\bigr\}$ such that 
\begin{equation}\label{deffinal}
e \edge{\gamma_{j_1}} r_{\gamma_{j_1}} 
\edge{\gamma_{j_2}} r_{\gamma_{j_1}}r_{\gamma_{j_2}}
\edge{\gamma_{j_3}} \cdots \edge{\gamma_{j_r}}
r_{\gamma_{j_1}}r_{\gamma_{j_2}} \cdots r_{\gamma_{j_r}}=:\phi(A)
\end{equation}
is a directed path in the quantum Bruhat graph $\QB(W)$ for $W$. 
The subsets $A$ are called admissible subsets, and 
$\phi(A)$ is called the final direction of $A$.
\end{dfn}

For $A=\bigl\{j_{1} < \cdots < j_{r}\bigr\} \in \CA(-\lamm)$, 
we define $\wt(A) \in X$, $\Ht(A) \in \BZ_{\ge 0}$ 
(see \cite[Definition~5.1 and (7.1)]{LNSSS2}), 
and $\cHt(A) \in \BZ_{\ge 0}$ as follows: 
%
%
\begin{equation} \label{eq:wtA}
\begin{split}
\wt(A)
 & :=-r_{\beta_{j_1}^{\Le}}r_{\beta_{j_2}^{\Le}} \cdots r_{\beta_{j_r}^{\Le}} \cdot (\lamm) \\
 & =-r_{\gamma_{j_1}^{\Le},\,-b_{j_1}}r_{\gamma_{j_2}^{\Le},\,-b_{j_2}} \cdots 
     r_{\gamma_{j_r}^{\Le},\,-b_{j_r}} \cdot (\lamm), 
\end{split}
\end{equation}
%
%
\begin{equation} \label{eq:height}
\Ht(A):= \sum_{j \in A_{-}} \Bigl(
 \pair{(\gamma_{j}^{\Le})^{\vee} }{ -\lamm } - b_{j} \Bigr), 
\end{equation}
%
%
\begin{equation}\label{defcoheight}
\cHt(A):= \sum_{j \in A_{-}}  b_{j}, 
\end{equation}
where 
%
%
\begin{equation} \label{eq:a-}
A_{-}:=
\bigl\{ j_{k} \in A \mid 
\text{
  $r_{\gamma_{j_1}} \cdots r_{\gamma_{j_{k-1}}} \edge{ \gamma_{j_k}^{\Le} } 
   r_{\gamma_{j_1}} \cdots r_{\gamma_{j_{k-1}}}r_{\gamma_{j_{k}}}$
is a quantum edge} \bigr\}.
\end{equation}

Let $m_{\lamm}=t_{\lamm}=\pi r_{i_1}r_{i_2} \cdots r_{i_{\ell}}$ be 
the reduced expression for $m_{\lamm}=t_{\lamm}$ corresponding 
to the $(-\lamm)$-chain of roots \eqref{eq:chain} under 
the correspondence \eqref{eq:1to1}. 
We define $\QBM$ by using this reduced expression 
for $m_{\lamm}=t_{\lamm}$. Note that 
%
%
\begin{equation} \label{eq:gamma}
\gamma_{k} = \gamma_{k}^{\Le} = \gamma_{k}^{\OS} \qquad \text{for $1 \le k \le \ell$}. 
\end{equation}
%
%
\begin{lem} \label{lem:A-QB}
Keep the notation and setting above. 
Then, 
\begin{equation*}
A \in \CA(-\lamm) \quad \text{\rm if and only if} \quad p_{A} \in \QBM.
\end{equation*}
Hence we have a bijection from $\CA(-\lamm)$ onto 
$\QBM$ that maps $A \in \CA(-\lamm)$ to $p_{A} \in \QBM$. 
Moreover, we have
%
%
\begin{equation} \label{eq:HtWt}
\wt(A) = - \wt(p_{A}) \quad \text{\rm and} \quad 
\Ht(A)=\deg(\qwt(p_{A})) \quad \text{\rm for all $A \in \CA(-\lamm)$}. 
\end{equation}
\end{lem}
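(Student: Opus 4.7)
The plan is to exploit the precise relationship between the Lenart--Postnikov and Orr--Shimozono sequences of roots provided by Remark~\ref{rem:betaLPOS}, namely $\beta_k^{\Le}=-t_{\lamm}(\beta_k^{\OS})$. This already records the equality $\gamma_k^{\Le}=\gamma_k^{\OS}=\gamma_k$ (see \eqref{eq:gamma}) and the decomposition identity $b_k=\pair{\gamma_k^\vee}{-\lamm}-a_k$. Since the underlying index sets of both $\CA(-\lamm)$ and $\QBM$ are subsets of $\{1,2,\dots,\ell\}$, the desired bijection should simply send $A$ to $p_A$, so the task reduces to matching the two admissibility conditions and verifying the weight and height identities.

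For the first, my plan is to prove by induction on $k$ that $\dir(z_k)=r_{\gamma_{j_1}}r_{\gamma_{j_2}}\cdots r_{\gamma_{j_k}}$ and $\wt(z_k)=\wt(z_{k-1})+a_{j_k}\dir(z_{k-1})\gamma_{j_k}$. The base case $\dir(z_0)=\dir(m_{\lamm})=\dir(t_{\lamm})=e$ is immediate from Lemma~\ref{lem:mxi}. For the inductive step, one writes $\beta_{j_k}^{\OS}=a_{j_k}\ti{\delta}-\gamma_{j_k}^\vee$ and applies Remark~\ref{rem:refl} to obtain $r_{\beta_{j_k}^{\OS}}=t_{a_{j_k}\gamma_{j_k}}r_{\gamma_{j_k}}$ in $\ti{W}_{\ext}$; a routine semidirect-product calculation then yields both recursions. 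With $\dir(z_k)$ so identified, Definitions~\ref{dfn:QA} and~\ref{dfn:QBX} impose exactly the same edge conditions (labels, sources, and targets all coincide), so $A\in\CA(-\lamm)$ if and only if $p_A\in\QBM$, and moreover $A_-=A^-$.

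For the weight identity, the relation $\beta_k^{\Le}=-t_{\lamm}(\beta_k^{\OS})$ yields the conjugation identity $r_{\beta_k^{\Le}}=t_{\lamm}r_{\beta_k^{\OS}}t_{-\lamm}$, and the resulting product telescopes to
\begin{equation*}
r_{\beta_{j_1}^{\Le}}\cdots r_{\beta_{j_r}^{\Le}}
= t_{\lamm}\bigl(r_{\beta_{j_1}^{\OS}}\cdots r_{\beta_{j_r}^{\OS}}\bigr)t_{-\lamm}
= t_{\lamm}\bigl(t_{-\lamm}z_r\bigr)t_{-\lamm}
= z_r\,t_{-\lamm},
\end{equation*}
using $z_0^{-1}z_r=t_{-\lamm}z_r$. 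Applying this element to $\lamm$ and noting $t_{-\lamm}\cdot\lamm=0$ gives $z_r\cdot 0=\wt(z_r)=\wt(p_A)$, so by \eqref{eq:wtA} we conclude $\wt(A)=-\wt(p_A)$. For the height, substituting $b_j=\pair{\gamma_j^\vee}{-\lamm}-a_j$ into \eqref{eq:height} immediately yields $\Ht(A)=\sum_{j\in A_-}a_j$, which equals $\degr(\qwt(p_A))$ by \eqref{eq:degr} since $A_-=A^-$. The only step requiring real care is the bookkeeping for the split $z_k=t_{\wt(z_k)}\dir(z_k)$ under successive right multiplications by affine reflections, where translation and direction parts interact; once $r_{\beta_{j_k}^{\OS}}=t_{a_{j_k}\gamma_{j_k}}r_{\gamma_{j_k}}$ is in hand, everything else is formal.
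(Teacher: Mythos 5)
Your proof is correct. The first part (the equivalence $A\in\CA(-\lamm)\iff p_A\in\QBM$) and the height identity follow the paper's own argument essentially verbatim: once one knows $\dir(z_0)=e$ (from $m_{\lamm}=t_{\lamm}$), $\gamma_k^{\Le}=\gamma_k^{\OS}$, and $b_k=\pair{\gamma_k^\vee}{-\lamm}-a_k$, both the matching of the two admissibility conditions (together with $A_-=A^-$) and the substitution giving $\Ht(A)=\sum_{j\in A_-}a_j=\degr(\qwt(p_A))$ are formal. Where you genuinely diverge is the weight identity. The paper proceeds by induction on $|A|$, proving the one-step recursion $\wt(A)=\wt(A')-\bigl(\pair{\gamma_{j_r}^\vee}{-\lamm}-b_{j_r}\bigr)r_{\gamma_{j_1}}\cdots r_{\gamma_{j_{r-1}}}(\gamma_{j_r})$ for $\wt(A)$ in parallel with the corresponding recursion for $\wt(p_A)$, and matching term by term. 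You instead observe that $\beta_k^{\Le}=-t_{\lamm}(\beta_k^{\OS})$ gives the conjugation $r_{\beta_k^{\Le}}=t_{\lamm}\,r_{\beta_k^{\OS}}\,t_{-\lamm}$ in $\ti{W}_{\ext}$, so the product telescopes to $r_{\beta_{j_1}^{\Le}}\cdots r_{\beta_{j_r}^{\Le}}=t_{\lamm}\,(t_{-\lamm}z_r)\,t_{-\lamm}=z_r\,t_{-\lamm}$; applying this to $\lamm$ and using $t_{-\lamm}\cdot\lamm=0$ and $z_r\cdot 0=\wt(z_r)$ gives $\wt(A)=-\wt(p_A)$ directly. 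This is a cleaner, noninductive argument that exposes the conjugation structure behind Remark~\ref{rem:betaLPOS}; what you give up is the paper's explicit one-step recursion, which the paper happens not to reuse elsewhere, so nothing is lost. The only minor caveat is the parenthetical ``$z_0^{-1}z_r=t_{-\lamm}z_r$'', which reads as an identity of elements but is really two separate facts ($z_0^{-1}z_r=r_{\beta_{j_1}^{\OS}}\cdots r_{\beta_{j_r}^{\OS}}$ and $z_0^{-1}=t_{-\lamm}$); the intent is clear and the computation is sound.
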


\begin{proof}
Let $A=\bigl\{j_{1} < \cdots < j_{r}\bigr\}$. Then, we have
\begin{align*}
& p_{A} \in \QBM \iff 
\underbrace{\dir(z_0)}_{=e} 
\edge{\gamma_{j_1}^{\OS}}
\dir(z_1) \edge{\gamma_{j_2}^{\OS}}
   \cdots \edge{\gamma_{j_r}^{\OS}}
\dir(z_r) \quad \text{in $\QB(W)$} \\[3mm]
& \quad \iff 
e \edge{\gamma_{j_1}} r_{\gamma_{j_1}} 
\edge{\gamma_{j_2}} \cdots \edge{\gamma_{j_r}}
r_{\gamma_{j_1}}r_{\gamma_{j_2}} \cdots r_{\gamma_{j_r}}
\quad \text{in $\QB(W)$ by \eqref{eq:gamma}} \\
& \quad \iff A \in \CA(-\lamm).
\end{align*}

Next, we prove that $\Ht(A)=\deg(\qwt(p_{A}))$ for all $A \in \CA(-\lamm)$.
Let $A=\bigl\{j_{1} < \cdots < j_{r}\bigr\} \in \CA(-\lamm)$; 
we see from the argument above that the set $A^{-}$ in \eqref{eq:A-} is 
identical to the set $A_{-}$ in \eqref{eq:a-}. Then, we see that
\begin{align*}
\Ht(A) & = \sum_{j \in A_{-}} \Bigl(
 \pair{(\gamma_{j}^{\Le})^{\vee} }{ -\lamm } - b_{j} \Bigr) 
 \quad \text{by definition \eqref{eq:height}} \\[3mm]
& = \sum_{j \in A^{-}} \Bigl(
 \underbrace{\pair{\gamma_{j}^{\vee} }{ -\lamm } - b_{j}}_{=a_{j}} \Bigr) 
 \quad \text{by Remark~\ref{rem:betaLPOS}} \\[3mm]
& = \sum_{j \in A^{-}} a_{j} = \deg(\qwt(p_{A}))
 \quad \text{by \eqref{eq:degr}}. 
\end{align*}

Finally, we show that $\wt(A) = - \wt(p_{A})$ 
for all $A \in \CA(-\lamm)$; 
we proceed by induction on 
the cardinality of $A \in \CA(-\lamm)$. 
First, observe that this equality is obvious if $A=\emptyset$. 
Now, let us take $A=\bigl\{j_1 < \cdots < j_{r-1} < j_{r} \bigr\} \in \CA(-\lamm)$, 
and set $A':=\bigl\{j_1 < \cdots < j_{r-1}\bigr\}$, 
which is also an element of $\CA(-\lamm)$.
By direct computation, together with definition \eqref{eq:wtA}, 
we can show that 
%
%
\begin{equation} \label{eq:wt1}
\wt (A) = \wt (A')-
\bigl( \pair{ \gamma_{j_{r}}^{\vee} }{-\lamm}-b_{j_{r}} \bigr) 
r_{\gamma_{j_1}} \cdots r_{\gamma_{j_{r-1}}} (\gamma_{j_{r}});
\end{equation}
or, we may refer the reader to the proof of \cite[Proposition~6.7]{LNSSS2}. 
Also, we have
\begin{equation*}
z_{r} = z_{r-1} r_{\beta_{j_{r}}^{\OS}} = 
  z_{r-1}r_{ a_{j_r}\ti{\delta}+\ol{\beta_{j_r}^{\OS}} } = 
  z_{r-1} \bigl( t_{ -a_{j_r}(\ol{\beta_{j_r}^{\OS}})^{\vee} } 
  r_{ \ol{\beta_{j_r}^{\OS}} } \bigr) = 
  z_{r-1} t_{ a_{j_r}\gamma_{j_{r}} }r_{ \gamma_{j_{r}} }.
\end{equation*}
Therefore, if we write $z_{r}=t_{\wt (z_{r})}\dir(z_{r})$ and 
$z_{r-1}=t_{\wt (z_{r-1})}\dir(z_{r-1})$, then we deduce that
\begin{align*}
t_{\wt(z_{r})}\dir(z_{r}) 
 & = t_{\wt(z_{r-1})}\dir(z_{r-1}) t_{ a_{j_r}\gamma_{j_{r}} }r_{ \gamma_{j_{r}} }
   = t_{\wt(z_{r-1})}t_{ a_{j_r}\dir(z_{r-1})\gamma_{j_{r}} } 
     \dir(z_{r-1}) r_{ \gamma_{j_{r}} } \\
 & = t_{\wt(z_{r-1}) + a_{j_r}\dir(z_{r-1})\gamma_{j_{r}} } 
     \bigl(\dir(z_{r-1}) r_{ \gamma_{j_{r}} }\bigr),
\end{align*}
and hence
\begin{equation*}
\wt(p_{A})=\wt(z_{r})=\wt(z_{r-1}) + a_{j_r}\dir(z_{r-1})\gamma_{j_{r}}.
\end{equation*}
Here, since $a_{j_{r}}=\pair{\gamma_{j_{r}}^{\vee}}{-\lamm}-b_{j_{r}}$ by 
Remark~\ref{rem:betaLPOS}, we obtain 
\begin{align*}
\wt(p_{A}) & = \wt(z_{r-1}) + 
 \bigl(\pair{\gamma_{j_{r}}^{\vee}}{-\lamm}-b_{j_{r}}\bigr)
 \dir(z_{r-1})\gamma_{j_{r}} \\
& = \wt(p_{A'})+  \bigl(\pair{\gamma_{j_{r}}^{\vee}}{-\lamm}-b_{j_{r}}\bigr)
 \dir(z_{r-1})\gamma_{j_{r}}; 
\end{align*}
note that $\dir(z_{r-1})=r_{\gamma_{j_{1}}} \cdots r_{\gamma_{j_{r-1}}}$ since 
$\dir(z_{0})=\dir(m_{\lamm})=e$. Hence it follows that
\begin{align*}
 \wt (p_{A}) & = 
 \wt(p_{A'})+  \bigl(\pair{\gamma_{j_{r}}^{\vee}}{-\lamm}-b_{j_{r}}\bigr)
 r_{\gamma_{j_{1}}} \cdots r_{\gamma_{j_{r-1}}}(\gamma_{j_{r}}) \\
& = -\wt (A') + \bigl(\pair{\gamma_{j_{r}}^{\vee}}{-\lamm}-b_{j_{r}}\bigr)
 r_{\gamma_{j_{1}}} \cdots r_{\gamma_{j_{r-1}}}(\gamma_{j_{r}}) \\
& \hspace*{50mm} \text{by our induction hypothesis} \\
& = -\wt (A) \quad \text{by \eqref{eq:wt1}},
\end{align*}
as desired. This completes the proof of the lemma.
\end{proof}
%
%
\subsection{Lexicographic (lex) $(-\lamm)$-chains of roots.}
\label{subsec:lex}

We keep the notation and setting of the previous subsection; 
we fix a dominant integral weight $\lambda \in X$, and set 
$J=J_{\lambda}=\bigl\{i \in I \mid \pair{\alpha_{i}^{\vee}}{\lambda}=0\bigr\}$. 
For $w \in W$, we simply write $\mcr{w}^{J}=\mcr{w}^{J_{\lambda}} \in W^{J}$ as 
$\mcr{w}$, unless stated otherwise explicitly. 

In \cite[\S4]{LP2} (see also \cite[Proposition~5.4]{LNSSS2}), 
we introduced a specific $(-\lamm)$-chain of roots, called 
a lexicographic (lex for short) $(-\lamm)$-chain of roots. 
We will frequently make use of the following property 
of a lex $(-\lamm)$-chain of roots: If $m_{\lamm}=\pi 
r_{i_1}r_{i_2} \cdots r_{i_{\ell}}$ is the reduced expression for 
$m_{\lamm}$ corresponding to a lex $(-\lamm)$-chain of roots 
(recall from \eqref{eq:1to1} the one-to-one correspondence between 
the reduced expressions for $m_{\lamm}=t_{\lamm}$ and 
the $(-\lamm)$-chains of roots), then we have 
%
%
\begin{equation} \label{eq:bkle}
0 \le \frac{b_{1}}{ \bpair{ \ol{\beta_{1}^{\Le}} }{-\lamm} } \le 
\frac{b_{2}}{ \bpair{ \ol{\beta_{2}^{\Le}} }{-\lamm} } \le \cdots 
\le \frac{b_{\ell}}{ \bpair{ \ol{\beta_{\ell}^{\Le}} }{-\lamm}} < 1, 
\end{equation} 
where $\beta_{k}^{\Le} = b_{k}\ti{\delta} + \ol{\beta_{k}^{\Le}}$ 
for $1 \le k \le \ell$ is given as in \eqref{eq:betaL0} and \eqref{eq:bk} 
(see also Remark~\ref{rem:bk}). 

We know from Lemma~\ref{lem:mxi} that 
$m_{\lamm} = t_{\lamm} = \mcr{\lng} (t_{\lambda} \mcr{\lng}^{-1})$ and 
$m_{\lambda} = t_{\lambda} \mcr{\lng}^{-1}$. It follows that 
$m_{\lamm} = \mcr{\lng}m_{\lambda}$. 
Also, since $\ell(t_{\lambda})= \ell(m_{\lambda} \mcr{\lng} ) = 
\ell(m_{\lambda}) + \ell(\mcr{\lng})$ by \cite[(2.4.5)]{Mac}, 
we have 
%
%
\begin{equation} \label{eq:elltmu}
\ell(m_{\lamm}) = \ell(t_{\lamm})= \ell(t_{\lambda}) = \ell(m_{\lambda})+\ell(\mcr{\lng});
\end{equation}
note that $\ell(t_{\lamm})=\ell(t_{\lambda})$ by \cite[(2.4.1)]{Mac}. 
This implies that the concatenation of a reduced expression for $\mcr{\lng}$ with
a reduced expression for $m_{\lambda}$ is 
a reduced expression for $m_{\lamm}=t_{\lamm}$. 
We set $M:=\ell(\mcr{\lng})$. 
%
%
\begin{lem} \label{lem:lex}
Let $m_{\lamm}=\pi r_{i_1}r_{i_2} \cdots r_{i_{\ell}}$ be 
the reduced expression for $m_{\lamm}$ corresponding to a lex $(-\lamm)$-chain of roots under 
the correspondence \eqref{eq:1to1}. Then, 
\begin{equation*} 
\mcr{\lng} = r_{\p{1}} \cdots r_{\p{M}} \quad \text{\rm and} \quad
m_{\lambda} = \pi r_{i_{M+1}} \cdots r_{i_{\ell}}.
\end{equation*}
Namely, 
\begin{equation*}
m_{\lamm}=\pi r_{i_1}r_{i_2} \cdots r_{i_{\ell}}=
( \underbrace{r_{\p{1}} \cdots r_{\p{M}}}_{=\mcr{\lng} \ \text{\rm (reduced)}} ) 
( \underbrace{\pi r_{i_{M+1}} \cdots r_{i_{\ell}}}_{=m_{\lambda} \ \text{\rm (reduced)}}). 
\end{equation*}
\end{lem}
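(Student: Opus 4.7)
The plan is to exploit the length-additive factorization $m_{\lamm}=\mcr{\lng}\,m_{\lambda}$ with $\ell(m_{\lamm})=\ell(\mcr{\lng})+\ell(m_{\lambda})=M+(\ell-M)$, already established in \eqref{eq:elltmu}. Any concatenation of a reduced word for $\mcr{\lng}$ with a reduced word for $m_{\lambda}$ produces a reduced expression for $m_{\lamm}$, so what needs to be shown is that the specific lex reduced expression realizes precisely this concatenation, with the first $M$ letters multiplying to $\mcr{\lng}$ inside $W\subset\ti{W}_{\ext}$. Once this is done, cancellation together with the length identity forces $\pi r_{i_{M+1}}\cdots r_{i_{\ell}}=\mcr{\lng}^{-1}m_{\lamm}=m_{\lambda}$ as reduced expressions.

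The first step is to argue that the lex condition \eqref{eq:bkle} forces $b_{k}=0$ for precisely $1\le k\le M$. By the enumeration \eqref{eq:inv}, the pairs $(k,b_{k})$ with $b_{k}=0$ correspond bijectively to the positive roots $\beta$ satisfying $\pair{\beta^{\vee}}{-\lamm}>0$. Since $\lamm=\lng\lambda$ has $W$-stabilizer $W_{\omega(J)}$ (because $\pair{\alpha_{i}^{\vee}}{\lng\lambda}=-\pair{\alpha_{\omega(i)}^{\vee}}{\lambda}$), this set is $\Phi^{+}\setminus\Phi^{+}_{\omega(J)}$, of cardinality $|\Phi^{+}|-|\Phi^{+}_{J}|=M$. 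The non-decreasing condition on the normalized heights in \eqref{eq:bkle} then pins the indices with $b_{k}=0$ to be exactly $\{1,\ldots,M\}$.

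Next, since $b_{k}=0$ for $k\le M$, each reflection $r_{\beta_{k}^{\Le}}$ is an ordinary reflection in a hyperplane $H_{\gamma_{k}^{\Le},0}$ through the origin. Using $\pi A_{\circ}=A_{\circ}$ and $\pi r_{i_{j}}=r_{\p{j}}\pi$, one can rewrite $A_{k}=(r_{\p{1}}\cdots r_{\p{k}})\cdot A_{\circ}$. An induction on $k\le M$ --- based on the observation that the affine simple root $\ti{\alpha}_{0}$ carries a nonzero $\ti{\delta}$-coefficient that no product of finite reflections can erase --- shows that $\p{j}\ne 0$ and $r_{\p{j}}\in W$ for every $j\le M$. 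Hence $w_{1}:=r_{\p{1}}\cdots r_{\p{M}}$ is a reduced word inside $W$ of length $M$ with $A_{M}=w_{1}\cdot A_{\circ}$. To identify $w_{1}=\mcr{\lng}$, I observe that $w_{1}\in W$ is determined by the set of positive roots $\alpha$ whose hyperplane $H_{\alpha,0}$ separates $A_{\circ}$ from $w_{1}A_{\circ}$, which by the previous step is $\Phi^{+}\setminus\Phi^{+}_{\omega(J)}$. A direct computation using $\mcr{\lng}=\lng\lngJ$, the identity $\lng\Phi^{+}_{J}=-\Phi^{+}_{\omega(J)}$, and the fact that $\lngJ$ permutes $\Phi^{+}\setminus\Phi^{+}_{J}$, yields the same separating set for $\mcr{\lng}\cdot A_{\circ}$, and hence $w_{1}=\mcr{\lng}$.

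The most delicate point of the plan is this final identification, which requires careful sign-tracking through the composition $\lng\lngJ$ together with the diagram involution $\omega$; all other steps are fairly routine consequences of the lex inequality \eqref{eq:bkle}, the enumeration \eqref{eq:inv}, and the action of $\ti{W}_{\ext}$ on alcoves.
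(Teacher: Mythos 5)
Your proposal is correct and tracks the paper's proof closely in its main structure: use \eqref{eq:bkle} to show the ratios $b_k/\pair{\ol{\beta_k^{\Le}}}{-\lamm}$ vanish precisely for $k\le M$ (the paper calls this index $K$ and shows $K=M$ by the cardinality count $\#(\Phi^{\vee+}\setminus\Phi^{\vee+}_J)=\ell(\mcr{\lng})=M$), deduce that $\p{1},\dots,\p{M}\in I$, identify $r_{\p{1}}\cdots r_{\p{M}}$ with $\mcr{\lng}$ by comparing inversion sets, and conclude with the length identity. Where you genuinely diverge is in the final identification $v:=r_{\p{1}}\cdots r_{\p{M}}=\mcr{\lng}$. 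The paper establishes $\{\alpha\in\Phi^{\vee+}\mid v^{-1}\alpha\in\Phi^{\vee-}\}=-\lng(\Phi^{\vee+}\setminus\Phi^{\vee+}_J)$ and then runs a stepwise argument via \eqref{eq:lexA} and \eqref{eq:lexB}, ending with $v^{-1}\lng\lngJ(\Phi^{\vee+})\subset\Phi^{\vee+}$ to force $v^{-1}\lng\lngJ=e$. You instead observe that $\text{Inv}(v^{-1})$ is the set of hyperplanes separating $A_\circ$ from $A_M=vA_\circ$, compute $\text{Inv}(\mcr{\lng}^{-1})=\text{Inv}(\lngJ\lng)=\Phi^+\setminus\Phi^+_{\omega(J)}$ directly (using $\lng\Phi^+_{\omega(J)}=-\Phi^+_J$ and the fact that $\lngJ$ permutes $\Phi^+\setminus\Phi^+_J$), and invoke the standard fact that a Weyl group element is determined by its inversion set. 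This saves the paper's intermediate cardinality count and the two auxiliary inclusions, at the price of the unspoken (but standard) "unique element with given inversion set" lemma. Both are valid; yours is a bit tighter, the paper's is more self-contained. One small observation: the hyperplane $H_{\alpha,0}$ separates $A_\circ$ from $wA_\circ$ exactly when $w^{-1}\alpha\in\Phi^-$, i.e.\ $\alpha\in\text{Inv}(w^{-1})$, so make sure the inverse is in the right place when you write up the identification carefully.
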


\begin{proof}
We make use of \eqref{eq:bkle}. 
Let $K$ be the maximal index such that 
$b_{K} / \pair{ \ol{\beta_{K}^{\Le}} }{-\lamm}  = 0$. 
Then we see that 
\begin{equation} \label{eq:lex1a}
\bigl\{ \beta_{k}^{\Le} \mid 1 \le k \le \ell \bigr\} \cap 
  \Bigl(-\lng(\Phi^{\vee+} \setminus \Phi^{\vee+}_{J})\Bigr) =
\bigl\{ \beta_{k}^{\Le} \mid 1 \le k \le K \bigr\}. 
\end{equation}
Also, we see from \eqref{eq:inv} that 
$-\lng(\Phi^{\vee+} \setminus \Phi^{\vee+}_{J}) \subset
\bigl\{\beta_{k}^{\Le} \mid 1 \le k \le \ell \bigr\}$. Hence
the left-hand side of \eqref{eq:lex1a} is identical to 
$-\lng(\Phi^{\vee+} \setminus \Phi^{\vee+}_{J})$. 
From these, by noting that 
$\#(\Phi^{\vee+} \setminus \Phi^{\vee+}_{J}) = 
\ell(\lng)-\ell(\lngJ)=\ell(\mcr{\lng}) = M$ 
(recall that $\lngJ$ is the longest element of $W_{J}$), 
we conclude that $K=M$, and hence that 
$\bigl\{ \beta_{k}^{\Le} \mid 1 \le k \le M \bigr\} = 
-\lng(\Phi^{\vee+} \setminus \Phi^{\vee+}_{J})$. 
In addition, since 
%
%
\begin{equation} \label{eq:btLe1}
\beta_{k}^{\Le} 
 = \pi r_{i_{1}} \cdots r_{i_{k-1}}(\ti{\alpha}_{i_{k}})
 = r_{\p{1}} \cdots r_{\p{k-1}}(\ti{\alpha}_{\p{k}}) \in
 -\lng(\Phi^{\vee+} \setminus \Phi^{\vee+}_{J})
\end{equation}
for all $1 \le k \le M$, we see easily that $\p{1},\,\dots,\,\p{M} \in I$. 

We will show that $v:=r_{\p{1}} \cdots r_{\p{M}} \in W$ 
is identical to $\mcr{\lng}$. By the argument above, we have
\begin{equation*}
\bigl\{\alpha \in \Phi^{\vee +} \mid 
v^{-1}\alpha \in \Phi^{\vee-}\bigr\} = 
\bigl\{ \beta_{k}^{\Le} \mid 1 \le k \le M \bigr\} = 
-\lng(\Phi^{\vee+} \setminus \Phi^{\vee+}_{J}). 
\end{equation*}
From this, we see that 
%
%
\begin{equation} \label{eq:lexA}
-v^{-1}\lng(\Phi^{\vee+} \setminus \Phi^{\vee+}_{J}) \subset \Phi^{\vee-}, 
   \quad \text{so that} \quad
v^{-1}\lng(\Phi^{\vee+} \setminus \Phi^{\vee+}_{J}) \subset \Phi^{\vee+}.
\end{equation}
Hence it follows that $\bigl\{\alpha \in \Phi^{\vee +} \mid 
v^{-1}\lng\alpha \in \Phi^{\vee-}\bigr\} \subset \Phi^{\vee+}_{J}$.
Since $v=r_{\p{1}} \cdots r_{\p{M}}$ is a reduced expression, 
we have $\ell(v)=M$, and hence 
\begin{equation*}
\# \bigl\{\alpha \in \Phi^{\vee +} \mid 
v^{-1}\lng\alpha \in \Phi^{\vee-}\bigr\} = \ell(v^{-1}\lng)=N-M.
\end{equation*}
Also, we have $\# \Phi^{\vee+}_{J} = \ell(\lngJ) = \ell(\lng)-\ell(\mcr{\lng})=N-M$. 
Therefore, we deduce that
%
%
\begin{equation} \label{eq:lexB}
\bigl\{\alpha \in \Phi^{\vee +} \mid 
v^{-1}\lng\alpha \in \Phi^{\vee-}\bigr\} = \Phi^{\vee+}_{J}.
\end{equation}
Since $\lngJ(\Phi^{\vee+} \setminus \Phi^{\vee+}_{J}) \subset
\Phi^{\vee+} \setminus \Phi^{\vee+}_{J}$ and 
$\lngJ(\Phi^{\vee+}_{J}) \subset \Phi^{\vee-}_{J}$, we have
\begin{align*}
& v^{-1}\lng \lngJ (\Phi^{\vee+} \setminus \Phi^{\vee+}_{J}) 
  \subset v^{-1}\lng (\Phi^{\vee+} \setminus \Phi^{\vee+}_{J}) 
  \subset \Phi^{\vee+} \qquad \text{by \eqref{eq:lexA}}, \\
& v^{-1}\lng \lngJ (\Phi^{\vee+}_{J}) 
  \subset v^{-1}\lng (\Phi^{\vee-}_{J}) 
  \subset \Phi^{\vee+}_{J} \qquad \text{by \eqref{eq:lexB}}. 
\end{align*}
From these, we obtain $v^{-1}\lng \lngJ (\Phi^{\vee+}) \subset \Phi^{\vee+}$, 
which implies that $v^{-1}\lng \lngJ = e$, and hence that 
$v=\lng\lngJ = \mcr{\lng}$, as desired. 
Finally, because $\ell(m_{\lambda})=\ell(m_{\lamm})-\ell(\mcr{\lng})=\ell-M$
and $m_{\lamm}=\mcr{\lng}m_{\lambda}$, it follows that
$m_{\lambda} = \pi r_{i_{M+1}} \cdots r_{i_{\ell}}$ is 
a reduced expression for $m_{\lambda}$. 
This proves the lemma.
\end{proof}

Fix a lex $(-\lamm)$-chain of roots. 
We construct $\QBM$ from the reduced expression 
$m_{\lamm}=\pi r_{i_1}r_{i_2} \cdots r_{i_{\ell}}$ 
corresponding to the lex $(-\lamm)$-chain of roots under \eqref{eq:1to1}, 
which we denote by $\QBM_{\lex}$; recall from 
\eqref{eq:betak}, \eqref{eq:ak}, and Remark~\ref{rem:betaLPOS} 
that for $1 \le k \le \ell$, 
\begin{equation*}
\begin{cases}
\beta_{k}^{\OS}
  = r_{i_{\ell}} \cdots r_{i_{k+1}}\ti{\alpha}_{i_{k}} 
  = a_{k}\ti{\delta} + \ol{\beta_{k}^{\OS}}, & 
  \text{with $a_{k} \in \BZ_{> 0}$ and $\ol{\beta_{k}^{\OS}} \in \ti{\Phi}^{-}$}, \\[1.5mm]
\gamma_{k}=\gamma_{k}^{\OS}=-(\ol{\beta_{k}^{\OS}})^{\vee} \in \Phi^{+}, \\[1.5mm]
b_{k}=\pair{\gamma_{k}^{\vee}}{-\lng\lambda}-a_{k}.
\end{cases}
\end{equation*}
We see from \eqref{eq:btLe1} that 
%
%
\begin{equation} \label{eq:gam}
\gamma_{k}=\gamma_{k}^{\Le} = (\beta_{k}^{\Le})^{\vee} = 
r_{\p{1}} \cdots r_{\p{k-1}}(\alpha_{\p{k}}) \qquad 
 \text{for $1 \le k \le M=\ell(\mcr{\lng})$}, 
\end{equation}
and hence 
%
%
\begin{equation} \label{eq:ga2}
\bigl\{\gamma_{1},\,\dots,\,\gamma_{M}\bigr\} = 
\bigl\{(\beta_{1}^{\Le})^{\vee},\,\dots,\,(\beta_{M}^{\Le})^{\vee}\bigr\} = 
 -\lng(\Phi^{+} \setminus \Phi^{+}_{J}) = \Phi^{+} \setminus \Phi^{+}_{\omega(J)}, 
\end{equation}
where $\omega:I \rightarrow I$ is the Dynkin diagram automorphism given by: 
$\lng \alpha_{i} = - \alpha_{\omega(i)}$ for $i \in I$. Also, it follows from 
the equality ``$K=M$'' (shown in the proof of Lemma~\ref{lem:lex}), 
together with \eqref{eq:bkle}, that 
%
%
\begin{equation} \label{eq:bkle2}
0 = \frac{b_{1}}{ \pair{ \gamma_{1}^{\vee} }{-\lamm} } = \cdots = 
    \frac{b_{M}}{ \pair{ \gamma_{M}^{\vee} }{-\lamm} } < 
    \frac{b_{M+1}}{ \pair{ \gamma_{M+1}^{\vee} }{-\lamm} } \le \cdots \le 
    \frac{b_{\ell}}{ \pair{ \gamma_{\ell}^{\vee} }{-\lamm}} < 1. 
\end{equation} 

Now, let $\mcr{\lng}=r_{p_1}r_{p_2} \cdots r_{p_M}$ be 
an (arbitrary) reduced expression for $\mcr{\lng}$, 
and set $i_{k}':=\pi^{-1}(p_{k})$ for $1 \le k \le M$. 
We see from Lemma~\ref{lem:lex} that 
%
%
\begin{equation} \label{eq:red-mlamm}
m_{\lamm}=
( \underbrace{r_{p_{1}} \cdots r_{p_{M}}}_{=\mcr{\lng}} ) 
( \underbrace{\pi r_{i_{M+1}} \cdots r_{i_{\ell}}}_{=m_{\lambda}})=
\pi r_{i_1'} \cdots r_{i_M'}r_{i_{M+1}} \cdots r_{i_{\ell}} 
\end{equation}
is a reduced expression for $m_{\lamm}$, which we denote by $R$. 
We construct $\QBM$ from this reduced expression $R$ of $m_{\lamm}$, 
and denote it by $\QBM_{R}$. Then, 
\begin{align*}
\beta_{k}^{\OS, R} & := 
\begin{cases}
\underbrace{r_{i_{\ell}} \cdots r_{i_{M+1}}}_{=m_{\lambda}^{-1}\pi} 
  r_{i_{M}'} \cdots r_{i_{k+1}'}\ti{\alpha}_{i_{k}'} = 
  m_{\lambda}^{-1}r_{p_{M}} \cdots r_{p_{k+1}}\ti{\alpha}_{p_{k}} & 
\text{\rm for $1 \le k \le M$}, \\[5mm]
r_{i_{\ell}} \cdots r_{i_{k+1}}\ti{\alpha}_{i_{k}} = \beta_{k}^{\OS} & 
\text{\rm for $M+1 \le k \le \ell$}, 
\end{cases} \\[5mm]
& = a_{k}^{R}\ti{\delta} + \ol{\beta_{k}^{\OS, R}} \quad 
 \text{for some $a_{k}^{R} \in \BZ_{> 0}$ and $\ol{\beta_{k}^{\OS, R}} \in \ti{\Phi}^{-}$}, 
\end{align*}
\begin{equation*}
\gamma_{k}^{\OS, R} : = 
 - (\ol{\beta_{k}^{\OS, R}})^{\vee} \in \Phi^{+}. 
\end{equation*}
Also, for the reduced expression $R$ of $m_{\lamm}$ in \eqref{eq:red-mlamm}, 
we define $\beta_{k}^{\Le, R}$, $1 \le k \le \ell$, as in \eqref{eq:betaL0}, 
and write it as: 
$\beta_{k}^{\Le, R} = b_{k}^{R}\ti{\delta} + \ol{\beta_{k}^{\Le, R}}$, 
with $b_{k}^{R} \in \BZ_{\ge 0}$ and $\ol{\beta_{k}^{\Le, R}} \in 
-\lng(\Phi^{\vee+} \setminus \Phi^{\vee+}_{J})$ (see \eqref{eq:bk}). 
Then we set $\gamma_{k}^{\Le, R} : = (\ol{\beta_{k}^{\Le, R}})^{\vee}$ 
for $1 \le k \le \ell$. By Remark~\ref{rem:betaLPOS}, we have
\begin{equation*}
\gamma_{k}^{\Le, R} = \gamma_{k}^{\OS, R}=:\gamma_{k}^{R}
\quad \text{and} \quad
b_{k}^{R}=\pair{(\gamma_{k}^{R})^{\vee}}{-\lng \lambda} - a_{k}^{R}
\quad \text{for $1 \le k \le \ell$}. 
\end{equation*}
Notice that $\beta_{k}^{\Le, R} = r_{p_{1}} \cdots r_{p_{k-1}}(\ti{\alpha}_{p_{k}})$
for $1 \le k \le M$. Since $p_{1},\,\dots,\,p_{M} \in I$, we see that 
$\beta_{k}^{\Le, R} \in \Phi^{\vee+}$ for all $1 \le k \le M$, 
which implies that $b_{k}^{R} = 0$ and 
%
%
\begin{equation} \label{eq:gamR}
\gamma_{k}^{R} =
\gamma_{k}^{\Le, R} = 
  (\ol{\beta_{k}^{\Le, R}})^{\vee} = 
  (\beta_{k}^{\Le, R})^{\vee}=
  r_{p_1} \cdots r_{p_{k-1}}(\alpha_{p_{k}})
\end{equation}
for all $1 \le k \le M$.

%
\begin{lem} \label{lem:R}
Keep the notation and setting above. We have
\begin{align}
& \bigl\{\beta^{\OS,R}_{k} \mid 1 \le k \le M\bigr\} = \bigl\{\beta^{\OS}_{k} \mid 1 \le k \le M\bigr\}, 
  \label{eq:R1} \\
& \beta^{\OS,R}_{k} = \beta^{\OS}_{k} \qquad \text{\rm for all $M+1 \le k \le \ell$}.
  \label{eq:R2}
\end{align}
Hence
\begin{align}
& \bigl\{\gamma^{R}_{k} \mid 1 \le k \le M\bigr\} 
  = \bigl\{\gamma_{k} \mid 1 \le k \le M\bigr\} 
  = \Phi^{+} \setminus \Phi^{+}_{\omega(J)}, \label{eq:R3} \\
& \gamma^{R}_{k} = \gamma_{k} \quad \text{\rm for all $M+1 \le k \le \ell$}, \label{eq:R4} \\[1.5mm]
& b_{k}^{R}=
  \begin{cases}
  0 & \text{\rm for $1 \le k \le M$}, \\[1.5mm]
  b_{k} > 0 & \text{\rm for $M+1 \le k \le \ell$}. 
  \end{cases} \label{eq:R5}
\end{align}
\end{lem}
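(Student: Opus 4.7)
The plan is to establish the five assertions in the natural order \eqref{eq:R2}, \eqref{eq:R1}, \eqref{eq:R4}, \eqref{eq:R3}, \eqref{eq:R5}, exploiting the fact that the reduced expression $R$ shares its final $\ell - M$ letters with the lex reduced expression. The only ingredients required are the definitions of $\beta_k^{\OS}$ and $\beta_k^{\Le}$, Remark~\ref{rem:betaLPOS}, the inversion-set identity \eqref{eq:inv}, and the observations \eqref{eq:gam}, \eqref{eq:ga2}, and \eqref{eq:gamR} already proved in the excerpt. No new combinatorial input is needed; the content of the lemma is a careful bookkeeping exercise.

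First, I would prove \eqref{eq:R2}. By conjugating $\pi$ past $r_{p_1}\cdots r_{p_M}$ (using $r_{p_k}\pi = \pi r_{i_k'}$) the expression $R$ rewrites as $m_{\lamm} = \pi r_{i_1'}\cdots r_{i_M'} r_{i_{M+1}}\cdots r_{i_\ell}$, so the letters in positions $M+1,\dots,\ell$ of $R$ coincide with those of the lex reduced expression. Since the definition $\beta_k^{\OS} = r_{i_\ell}\cdots r_{i_{k+1}}\ti{\alpha}_{i_k}$ depends only on the letters to the right of position $k$, equality $\beta_k^{\OS,R}=\beta_k^{\OS}$ for $M+1 \le k \le \ell$ is immediate. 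For \eqref{eq:R1}, the key observation is that the set $\{\beta_k^{\Le}\}_{k=1}^{\ell}$ depends only on $m_{\lamm}$, not on the reduced expression (this is \eqref{eq:inv}); applying Remark~\ref{rem:betaLPOS} (which reads $\beta_k^{\Le}=-m_{\lamm}(\beta_k^{\OS})$ and holds verbatim for the reduced expression $R$ as well) shows the same is true of $\{\beta_k^{\OS}\}_{k=1}^{\ell}$. Thus $\{\beta_k^{\OS,R}\}_{k=1}^{\ell} = \{\beta_k^{\OS}\}_{k=1}^{\ell}$, and cancelling off the common tail from \eqref{eq:R2} yields the desired set equality on the prefixes $1 \le k \le M$.

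The remaining three statements follow formally. Applying $\gamma = -(\overline{\beta})^{\vee}$ to \eqref{eq:R1} and \eqref{eq:R2} respectively, and using the already-established \eqref{eq:ga2}, gives \eqref{eq:R3} and \eqref{eq:R4}. For \eqref{eq:R5}, the case $1 \le k \le M$ is precisely the observation recorded just above the lemma statement: $\beta_k^{\Le,R} = r_{p_1}\cdots r_{p_{k-1}}(\ti{\alpha}_{p_k}) \in \Phi^{\vee+}$ has no $\ti{\delta}$-component, so $b_k^R = 0$. For $M+1 \le k \le \ell$, \eqref{eq:R2} gives $a_k^R = a_k$; combined with \eqref{eq:R4} and the formula $b_k^R = \langle (\gamma_k^R)^{\vee}, -\lamm\rangle - a_k^R$ (from Remark~\ref{rem:betaLPOS} applied to $R$) this yields $b_k^R = b_k$, and the strict positivity $b_k > 0$ in that range is exactly the content of \eqref{eq:bkle2}. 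The main (and only mild) obstacle is tracking the role of the diagram automorphism $\pi$ when comparing the two reduced expressions; once $\pi$ is moved to the left of $R$, everything reduces to manipulations involving identical tail letters, and the set-theoretic argument for \eqref{eq:R1} becomes transparent.
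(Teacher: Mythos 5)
Your proof is correct and follows essentially the same approach as the paper's. The one noteworthy variation is in establishing \eqref{eq:R1}: the paper argues locally, noting that the first $M$ factors of the inverted products $r_{p_M}\cdots r_{p_{k+1}}(\ti\alpha_{p_k})$ enumerate the inversion set $\Phi^{\vee+}\setminus\Phi^{\vee+}_{J}$ of $\mcr{\lng}^{-1}$ independently of the chosen reduced word for $\mcr{\lng}$, then hits that set with $m_\lambda^{-1}$; you instead argue globally, observing via \eqref{eq:inv} and Remark~\ref{rem:betaLPOS} that the entire set $\{\beta_k^{\OS}\}_{k=1}^{\ell}$ depends only on $m_{\lamm}$, and then subtracting the common tail identified in \eqref{eq:R2}. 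Both arguments rest on the same invariance of inversion sets; yours is a clean alternative packaging of the same idea, and the rest of the bookkeeping (including the use of \eqref{eq:bkle2} for the positivity in \eqref{eq:R5}) matches the paper.
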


\begin{proof}
It is obvious from the definitions that 
$\beta^{\OS,R}_{k} = \beta^{\OS}_{k}$ for all $M+1 \le k \le \ell$. 
We see from these equalities and \eqref{eq:bkle2} that 
\begin{equation*}
\gamma_{k}^{R} = \gamma_{k} \quad \text{and} \quad
b_{k}^{R}=b_{k} > 0 \quad \text{for all $M+1 \le k \le \ell$}.
\end{equation*}
Also, we have shown that $b_{k}^{R} = 0$ for all $1 \le k \le M$ 
(see the comment preceding this lemma). 

It remains to show \eqref{eq:R1} and \eqref{eq:R3}. 
Since $\mcr{\lng}=r_{p_1} \cdots r_{p_M} = r_{\p{1}} \cdots r_{\p{M}}$ 
are reduced expressions, it follows that 
\begin{align*}
\bigl\{r_{p_{M}} \cdots r_{p_{k+1}}(\ti{\alpha}_{p_{k}}) \mid 1 \le k \le M\bigr\} 
& = \bigl\{ \ti{\alpha} \in \Phi^{\vee+} \mid \mcr{\lng}\ti{\alpha} \in -\Phi^{\vee+} \bigr\} \\
& = \bigl\{r_{\p{M}} \cdots r_{\p{k+1}}(\ti{\alpha}_{\p{k}}) \mid 1 \le k \le M\bigr\};
\end{align*}
notice that 
%
%
\begin{equation} \label{eq:invw0}
\bigl\{ \ti{\alpha} \in \Phi^{\vee+} \mid \mcr{\lng}\ti{\alpha} \in -\Phi^{\vee+} \bigr\} = 
 \Phi^{\vee+} \setminus \Phi_{J}^{\vee+}.
\end{equation}
Indeed, we see from \eqref{eq:mcrs} that 
$\bigl\{ \ti{\alpha} \in \Phi^{\vee+} \mid \mcr{\lng}\ti{\alpha} \in -\Phi^{\vee+} \bigr\} 
 \subset \Phi^{\vee+} \setminus \Phi^{\vee+}_{J}$. Conversely, 
 if $\ti{\alpha} \in \Phi^{\vee+} \setminus \Phi^{\vee+}_{J}$, then 
$\lngJ \ti{\alpha} \in \Phi^{\vee+}$, and hence 
$\mcr{\lng} \ti{\alpha} = \lng \lngJ \ti{\alpha} \in -\Phi^{\vee+}$, as desired. 
Therefore, we deduce from the definitions that
\begin{align*}
\bigl\{\beta^{\OS,R}_{k} \mid 1 \le k \le M\bigr\} 
 & = m_{\lambda}^{-1}(\Phi^{\vee +} \setminus \Phi^{\vee+}_{J}) 
   = \bigl\{\beta^{\OS}_{k} \mid 1 \le k \le M\bigr\}, 
\end{align*}
and hence that
\begin{equation*}
 \bigl\{\gamma^{R}_{k} \mid 1 \le k \le M\bigr\} 
  = \bigl\{\gamma_{k} \mid 1 \le k \le M\bigr\} 
  \stackrel{\eqref{eq:ga2}}{=} \Phi^{+} \setminus \Phi_{\omega(J)}^{+}.
\end{equation*}
This proves the lemma. 
\end{proof}

We set $\ell(\lng):=N$; since $\lng = \mcr{\lng} \lngJ$, it follows that 
$\ell(\lngJ) = N-M$. Fix a reduced expression 
$\lngJ = r_{t_{M+1}} r_{t_{M+2}} \cdots r_{t_{N}}$ for $\lngJ$. Then,
\begin{equation*}
\lng = 
 \ub{r_{\p{1}} \cdots r_{\p{M}}}{=\mcr{\lng}} 
 \ub{r_{t_{M+1}} r_{t_{M+2}} \cdots r_{t_{N}}}{=\lngJ} =
 \ub{r_{p_1} \cdots r_{p_M}}{=\mcr{\lng}} 
 \ub{r_{t_{M+1}} r_{t_{M+2}} \cdots r_{t_{N}}}{=\lngJ}
\end{equation*}
are reduced expressions for $\lng$. Now we set
\begin{equation} \label{eq:gamM}
\xi_{k}=\xi_{k}^{R}:=\mcr{\lng}r_{t_{M+1}} \cdots r_{t_{k-1}}\alpha_{t_{k}} \in \Phi^{+}
\qquad \text{for $M+1 \le k \le N$}.
\end{equation}
Then, by \eqref{eq:gam} and \eqref{eq:gamR}, 
both of the sets 
$\bigl\{\gamma_{k} \mid 1 \le k \le M \bigr\} \cup 
\bigl\{\xi_{k} \mid M+1 \le k \le N \bigr\}$ and 
$\bigl\{\gamma_{k}^{R} \mid 1 \le k \le M \bigr\} \cup 
\bigl\{\xi_{k}^{R} \mid M+1 \le k \le N \bigr\}$ 
are identical to $\Phi^{+}$. 
Hence it follows from \eqref{eq:R3} that
%
%
\begin{equation} \label{eq:gamM2}
\bigl\{\xi^{R}_{k} \mid M+1 \le k \le N\bigr\} 
  = \bigl\{\xi_{k} \mid M+1 \le k \le N\bigr\} 
  = \Phi^{+}_{\omega(J)}. 
\end{equation}
If we define total orders $\prec$ and $\prec_{R}$ on $\Phi^{+}$ by: 
\begin{align}
 & \ub{\gamma_{1} \prec \cdots \prec \gamma_{M}}
      {\in \Phi^{+} \setminus \Phi_{\omega(J)}^{+}} \prec 
   \ub{\xi_{M+1} \prec \cdots \prec \xi_{N}}{\in \Phi_{\omega(J)}^{+}}, 
   \label{eq:prec} \\[3mm]
 & \ub{\gamma_{1}^{R} \prec_{R} \cdots \prec_{R} \gamma_{M}^{R}}
      {\in \Phi^{+} \setminus \Phi_{\omega(J)}^{+}} \prec 
   \ub{\xi_{M+1}^{R} \prec_{R} \cdots \prec_{R} \xi_{N}^{R}}{\in \Phi_{\omega(J)}^{+}}, 
   \label{eq:precR}
\end{align}
respectively, then these total orders are reflection orders 
(see, for example, \cite[Chap.\,5, Exerc.\,20]{BB}). 

Let $A=\bigl\{j_{1},\,j_{2},\,\dots,\,j_{r}\bigr\} \subset \bigl\{1,\,2,\,\dots,\,\ell\bigr\}$ 
be such that 
\begin{equation*}
\begin{split}
& p_{A}=\Bigl(
m_{\lamm} = t_{\lamm} = z_{0} \edge{\beta_{j_1}^{\OS}} \cdots 
      \edge{\beta_{j_r}^{\OS}} z_{r}
\Bigr) \in  \QBM_{\lex}, \\[3mm]
& \text{(resp.,\ }
p_{A}=\Bigl(
m_{\lamm} = t_{\lamm} = z_{0}^{R} \edge{\beta_{j_1}^{\OS,R}} \cdots 
      \edge{\beta_{j_r}^{\OS,R}} z_{r}^{R}
\Bigr) \in  \QBM_{R} \text{ )}; 
\end{split}
\end{equation*}
we set $j_{0}:=0$ by convention. 
By the definition (see Definition~\ref{dfn:QBX}), 
we have a directed path 
\begin{equation*}
\begin{split}
& 
e=\dir(z_{0}) \edge{ \gamma_{j_{1}} } \cdots 
\edge{ \gamma_{j_{r}} } \dir(z_{r}) \\[3mm]
& 
\text{(resp.,\ }
e=\dir(z_{0}^{R}) \edge{ \gamma_{j_{1}}^{R} } \cdots 
\edge{ \gamma_{j_{r}}^{R} } \dir(z_{r}^{R})
\text{)}
\end{split}
\end{equation*}
in the quantum Bruhat graph $\QB(W)$. Let us take 
$0 \le s \le r$ such that $j_{s} \le M$ and $j_{s+1} \ge M+1$, 
and set 
%
%
\begin{equation} \label{eq:tii}
\begin{split}
& \ti{\iota}(p_{A}):=\dir(z_{s}) =r _{\gamma_{j_1}} \cdots r _{\gamma_{j_s}} \in W \\
& \text{(resp., \ }
\ti{\iota}(p_{A}):=\dir(z_{s}^{R}) =r _{\gamma_{j_1}^{R}} \cdots r _{\gamma_{j_s}^{R}} \in W\text{)}.
\end{split}
\end{equation}
%
%
\begin{rem} \label{rem:dirz}
Because $\gamma_{j_1} \prec \gamma_{j_2} \prec \cdots \prec \gamma_{j_s}$ 
with respect to the reflection order $\prec$ on $\Phi^{+}$ (see \eqref{eq:prec}), 
we deduce from \cite[Theorem~6.4]{LNSSS2} that 
$e= \dir(z_{0}) \edge{ \gamma_{j_{1}} } \cdots \edge{ \gamma_{j_{s}} } \dir(z_{s})=\ti{\iota}(p_{A})$
is a shortest directed path from $e$ to $\ti{\iota}(p_{A})$ 
in the quantum Bruhat graph $\QB(W)$. Therefore, all the edges in this directed path 
are Bruhat edges by Remark~\ref{rem:QB}\,(3). 
We show by induction on $u$ that 
$\dir(z_{u}) \in W^{\omega(J)}$ for all $0 \le u \le s$. 
If $u=0$, then it is obvious that $\dir(z_{0}) = e \in W^{\omega(J)}$. 
Assume that $0 < u \le s$. Since $\dir(z_{u-1}) \in W^{\omega(J)}$ 
by our induction hypothesis, and 
since $\dir(z_{u-1}) \edge{ \gamma_{j_{u}} } 
\dir(z_{u}) = \dir(z_{u-1})r_{\gamma_{j_u}}$ is a Bruhat edge in $\QB(W)$, 
we see by \cite[Corollary~2.5.2]{BB} that 
$\dir(z_{u}) \in W^{\omega(J)}$ or 
$\dir(z_{u}) = \dir(z_{u-1})r_{i}$ for some $i \in \omega(J)$. 
Suppose that $\dir(z_{u}) = \dir(z_{u-1})r_{i}$ for some $i \in \omega(J)$. 
Since $\dir(z_{u-1})r_{\gamma_{j_u}} = \dir(z_{u}) = \dir(z_{u-1})r_{i}$, 
we have $r_{\gamma_{j_u}} = r_{i}$, and hence $\gamma_{j_u} = 
\alpha_{i} \in \Phi_{\omega(J)}^{+}$, which contradicts the fact that
$\gamma_{j_u} \in \Phi^{+} \setminus \Phi_{\omega(J)}^{+}$. 
Thus we obtain $\dir(z_{u}) \in W^{\omega(J)}$, as desired. 
In particular, $\ti{\iota}(p_{A}) = \dir (z_{s}) \in W^{\omega(J)}$. 
The same argument works also for the reduced expression $R$. 
\end{rem}

Here we define a map $\Theta_{R}^{\lex} : \QBM_{R} \rightarrow \QBM_{\lex}$. 
Let $A=\bigl\{j_{1},\,j_{2},\,\dots,\,j_{r}\bigr\} \subset \bigl\{1,\,2,\,\dots,\,\ell\bigr\}$ 
be such that 
%
%
\begin{equation} \label{eq:pA}
p_{A}=\Bigl(
m_{\lamm} = t_{\lamm} = 
  z_{0}^{R} 
      \edge{\beta_{j_1}^{\OS,R}} \cdots 
      \edge{\beta_{j_r}^{\OS,R}} 
  z_{r}^{R}
\Bigr) \in  \QBM_{R},
\end{equation}
that is, 
%
%
\begin{equation} \label{eq:Theta0}
e=\dir(z_{0}^{R}) \edge{ \gamma_{j_{1}}^{R} } 
\dir(z_{1}^{R}) \edge{ \gamma_{j_{2}}^{R} } \cdots 
\edge{ \gamma_{j_{r}}^{R} } \dir(z_{r}^{R})
\end{equation}
is a directed path in the quantum Bruhat graph $\QB(W)$. 
If we take $0 \le s \le r$ such that 
$j_{s} \le M$ and $j_{s+1} \ge M+1$, then we have a shortest directed path
\begin{equation*}
e=\dir(z_{0}^{R}) \edge{ \gamma_{j_{1}}^{R} } 
\dir(z_{1}^{R}) \edge{ \gamma_{j_{2}}^{R} } \cdots 
\edge{ \gamma_{j_{s}}^{R} } \dir(z_{s}^{R}) = \ti{\iota}(p_{A}) \in W^{\omega(J)}
\end{equation*}
in the quantum Bruhat graph $\QB(W)$; 
note that $\gamma_{j_{1}}^{R} \prec_{R} \cdots \prec_{R} \gamma_{j_{s}}^{R}$ 
with respect to the reflection order $\prec_{R}$ on $\Phi^{+}$ (see \eqref{eq:precR}).
We know from \cite[Theorem~6.4]{LNSSS2} that 
there exists a unique shortest directed path 
\begin{equation*}
e = x_{0} \edge{ \gamma_{q_1} } \cdots \edge{ \gamma_{q_u} } x_{u} 
\edge{ \xi_{q_{u+1}} } \cdots \edge{ \xi_{q_{s}} }x_{s} = \ti{\iota}(p_{A})
\end{equation*}
from $e$ to $\ti{\iota}(p_{A})$ in $\QB(W)$ such that 
$1 \le q_1 < \cdots < q_u \le M < q_{u+1} \le \cdots \le q_{s} \le N = \ell(\lng)$ 
(see \eqref{eq:gam} and \eqref{eq:gamM}) for some $0 \le u \le s$; note that 
all the edges in this directed path are Bruhat edges 
by Remark~\ref{rem:QB}\,(3). 
We claim that $u=s$. Indeed, suppose for a contradiction that $u < s$; 
in this case, $\xi_{q_{s}} \in \Phi_{\omega(J)}^{+}$ by \eqref{eq:gamM2}, 
and hence $r_{\xi_{q_s}} \in W_{\omega(J)}$. 
We write $x_{s-1} = \mcr{x_{s-1}}^{\omega(J)} z$ for some $z \in W_{\omega(J)}$; 
note that $\ell(x_{s-1}) = \ell(\mcr{x_{s-1}}^{\omega(J)}) + \ell(z)$. 
We see that 
\begin{equation*}
\ti{\iota}(p_{A}) = x_{s} = x_{s-1} r_{\xi_{q_s}} = 
\ub{\mcr{x_{s-1}}^{\omega(J)}}{\in W^{\omega(J)}}
\ub{zr_{\xi_{q_s}}}{\in W_{\omega(J)}},
\end{equation*}
and hence $\ell(x_{s}) = \ell(\mcr{x_{s-1}}^{\omega(J)}) + \ell(zr_{\xi_{q_s}})$. 
Because $x_{s-1} \edge{ \xi_{q_s} } x_{s}$ is a Bruhat edge in $\QB(W)$ as seen above, we have 
$\ell(x_{s})=\ell(x_{s-1})+1$. Combining these equalities, we obtain 
\begin{equation*} 
\ell(\mcr{x_{s-1}}^{\omega(J)}) + \ell(zr_{\xi_{q_s}}) = 
\ell(x_{s}) = \ell(x_{s-1})+1 = 
\ell(\mcr{x_{s-1}}^{\omega(J)}) + \ell(z) + 1, 
\end{equation*}
and hence $\ell(zr_{\xi_{q_s}}) = \ell(z) + 1 \ge 1$. 
Hence it follows that $zr_{\xi_{q_s}} \ne e$, 
which implies that $\ti{\iota}(p_{A}) = x_{s} =
\mcr{x_{s-1}}^{\omega(J)} zr_{\xi_{q_s}} \notin W^{\omega(J)}$. 
However, this contradicts the fact that $\ti{\iota}(p_{A}) \in W^{\omega(J)}$ 
(see Remark~\ref{rem:dirz}). Thus, we obtain $u=s$, and hence a directed path
%
%
\begin{equation} \label{eq:Theta1}
e = x_{0} \edge{ \gamma_{q_1} } \cdots \edge{ \gamma_{q_s} } x_{s} 
  = \ti{\iota}(p_{A})
\end{equation}
such that $1 \le q_1 < \cdots < q_s \le M$. 

Now, we set $B:=\bigl\{q_{1},\,\dots,\,q_{s},\,j_{s+1},\,\dots,\,j_{r}\bigr\}$, 
and consider 
%
%
\begin{equation} \label{eq:pB}
p_{B}=\Bigl(
m_{\lamm} = t_{\lamm} = z_{0} 
\edge{\beta_{q_1}^{\OS}} \cdots \edge{\beta_{q_s}^{\OS}} 
z_{s} \edge{\beta_{j_{s+1}}^{\OS}} \cdots 
      \edge{\beta_{j_r}^{\OS}} z_{r}
\Bigr).
\end{equation}
Since $M+1 \le j_{s+1} < \cdots < j_{r} \le \ell$, 
we see from \eqref{eq:R4} that $\gamma_{j_{u}} = \gamma_{j_{u}}^{R}$ 
for all $s+1 \le u \le r$. Therefore, by replacing the first $s$ edges in 
\eqref{eq:Theta0} with \eqref{eq:Theta1}, we obtain a directed path 
\begin{equation*}
e=\dir(z_{0}) \edge{ \gamma_{q_{1}} } \cdots 
\edge{ \gamma_{q_{s}} } \dir(z_{s}) = \ti{\iota}(p_{A}) 
\edge{ \gamma_{j_{s+1}} } \cdots \edge{ \gamma_{j_{r}} } \dir(z_{r})
\end{equation*}
in the quantum Bruhat graph $\QB(W)$. 
Hence we conclude that $p_{B} \in \QBM_{\lex}$; 
we set $\Theta_{R}^{\lex}(p_{A}):=p_{B}$. 
%
%
\begin{prop} \label{prop:Theta}
The map $\Theta_{R}^{\lex} : \QBM_{R} \rightarrow \QBM_{\lex}$ is bijective. 
Moreover, for every $p \in \QBM_{R}$, 
\begin{equation*}
\wt (\Theta_{R}^{\lex}(p)) = \wt (p), \qquad
\qwt (\Theta_{R}^{\lex}(p)) = \qwt (p), \qquad
\ti{\iota}(\Theta_{R}^{\lex}(p)) = \ti{\iota}(p).
\end{equation*}
\end{prop}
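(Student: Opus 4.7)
The strategy is to observe that $\Theta_R^{\lex}$ modifies only the initial segment (positions $1,\ldots,s$, whose indices $\leq M$ label Bruhat edges) of $p_A$, replacing it by another shortest directed path from $e$ to $\ti{\iota}(p_A)$ in $\QB(W)$, while the terminal segment (positions $s+1,\ldots,r$ with indices $\geq M+1$) is preserved verbatim. Since the split index $s$ and the endpoint $\ti{\iota}(p_A)=\dir(z_s)$ are unchanged, the equality $\ti{\iota}(\Theta_R^{\lex}(p_A))=\ti{\iota}(p_A)$ holds by construction. For bijectivity, I would construct the inverse $\QBM_{\lex}\to\QBM_R$ by the mirror procedure: given $p\in\QBM_{\lex}$, apply \cite[Theorem~6.4]{LNSSS2} with the reflection order $\prec_R$ of~\eqref{eq:precR} to obtain a unique shortest directed path from $e$ to $\ti{\iota}(p)$ in $\QB(W)$ whose labels appear in $\prec_R$-increasing order; the argument preceding the proposition (that $\ti{\iota}(p)\in W^{\omega(J)}$ combined with $\xi_k^R\in\Phi_{\omega(J)}^+$ forces $u=s$) guarantees that all labels come from $\{\gamma_1^R,\ldots,\gamma_M^R\}$, so the inverse is well-defined, and both compositions collapse to the identity.

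For the preservation of $\qwt$, the first $s$ edges in both $p_A$ and $\Theta_R^{\lex}(p_A)$ are Bruhat edges, so the quantum-edge set $A^-$ is contained in the set of terminal-segment indices. For each such $j\geq M+1$, equations~\eqref{eq:R2} and~\eqref{eq:R4} of Lemma~\ref{lem:R} give $\beta_j^{\OS,R}=\beta_j^{\OS}$, yielding $\qwt(\Theta_R^{\lex}(p_A))=\qwt(p_A)$ immediately from~\eqref{eq:qwt}.

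For the preservation of $\wt$, I would iterate the one-step formula derived in the proof of Lemma~\ref{lem:A-QB} to get
\begin{equation*}
\wt(z_r) \;=\; \lamm + \sum_{k=1}^r a_{j_k}\,\dir(z_{k-1})\,\gamma_{j_k},
\end{equation*}
and split the sum at $k=s$. The terminal-segment contributions ($k>s$) coincide because $\gamma_{j_k}$, $a_{j_k}$, and the starting direction $\dir(z_s)=\ti{\iota}(p_A)$ are common by Lemma~\ref{lem:R}. For the initial-segment contributions ($k\leq s$), equation~\eqref{eq:R5} of Lemma~\ref{lem:R} gives $b_{j_k}^R=0$ for $p_A$, and~\eqref{eq:bkle2} gives the analogous vanishing for $\Theta_R^{\lex}(p_A)$; Remark~\ref{rem:betaLPOS} then identifies the relevant $a_{j_k}$ with $\pair{\gamma_{j_k}^{\vee}}{-\lamm}$ in both cases. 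The key algebraic input is the telescoping identity
\begin{equation*}
\sum_{k=1}^s \pair{\delta_k^{\vee}}{-\lamm}\,r_{\delta_1}\cdots r_{\delta_{k-1}}(\delta_k) \;=\; (r_{\delta_1}\cdots r_{\delta_s})\lamm-\lamm,
\end{equation*}
valid for any sequence of roots $\delta_1,\ldots,\delta_s$, which follows by summing the one-step relations $r_{\delta_1}\cdots r_{\delta_{k-1}}(r_{\delta_k}-1)\lamm=\pair{\delta_k^{\vee}}{-\lamm}\,r_{\delta_1}\cdots r_{\delta_{k-1}}(\delta_k)$. Applied to the two initial-segment root sequences, both satisfying $r_{\delta_1}\cdots r_{\delta_s}=\ti{\iota}(p_A)$, this shows that the initial-segment contributions both equal $\ti{\iota}(p_A)\lamm-\lamm$, whence $\wt(\Theta_R^{\lex}(p_A))=\wt(p_A)$. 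The main obstacle is precisely isolating this telescoping identity in order to reduce path-dependence to endpoint-dependence; the $\ti{\iota}$ and $\qwt$ preservations and the bijectivity follow from bookkeeping using Lemma~\ref{lem:R} and \cite[Theorem~6.4]{LNSSS2}.
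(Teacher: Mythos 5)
Your proposal is correct, and the bijectivity, $\qwt$-, and $\ti{\iota}$-preservation arguments are essentially the ones in the paper: inverse constructed by the mirror procedure and the uniqueness in \cite[Theorem~6.4]{LNSSS2}; $A^{-}=B^{-}$ lying entirely in the terminal segment plus $\beta_{j}^{\OS,R}=\beta_{j}^{\OS}$ for $j\ge M+1$ from Lemma~\ref{lem:R}; and the split index argument for $\ti{\iota}$. The one place where you deviate is the $\wt$-preservation. The paper proves the stronger statement that $z_{s}^{R}=z_{s}$ by showing inductively that $z_{u}^{R}=t_{\dir(z_{u}^{R})\lng\lambda}\dir(z_{u}^{R})$ and $z_{u}=t_{\dir(z_{u})\lng\lambda}\dir(z_{u})$ for $0\le u\le s$ (which uses the same facts $b_{k}^{R}=b_{k}=0$ that you invoke), then concludes $\enp{A}=\enp{B}$ because the remaining reflections are literally identical. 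You instead iterate the one-step weight formula from the proof of Lemma~\ref{lem:A-QB}, split the sum at $s$, and reduce the initial-segment contributions to $\ti{\iota}(p_{A})\lamm-\lamm$ by a correct telescoping identity. Your telescoping is really just a repackaging of the paper's inductive step $\wt(z_u)=\dir(z_u)\lamm$; the paper's version yields the stronger conclusion that the $\ti{W}_{\ext}$-elements $z_s^R$ and $z_s$ coincide, whereas yours directly targets the weight. Both are valid, and the choice is essentially a matter of bookkeeping. One small point of rigor: in the telescoping you need the lex-version increments $a_{q_k}$ for $k\le s$, not $a_{j_k}$; your phrasing is slightly loose on the subscripts, but the intent is clear from your reference to ``the two initial-segment root sequences.''
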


\begin{proof}
Let $A=\bigl\{j_{1},\,j_{2},\,\dots,\,j_{r}\bigr\}$ and 
$B = \bigl\{q_{1},\,\dots,\,q_{s},\,j_{s+1},\,\dots,\,j_{r}\bigr\}$ 
be as in the definition above of the map $\Theta_{R}^{\lex}$. 
Recall that 
\begin{equation*}
\begin{split}
& p_{A} =\Bigl(
m_{\lamm}=z_{0}^{R} \edge{\beta_{j_1}^{\OS,R}} \cdots 
\edge{\beta_{j_s}^{\OS,R}} z_{s}^{R} \edge{\beta_{j_{s+1}}^{\OS,R}}
\cdots \edge{\beta_{j_r}^{\OS,R}} z_{r}^{R} = \enp{A}
\Bigr), \\[3mm]
&
e=\dir(z_{0}^{R}) \edge{ \gamma_{j_{1}}^{R} } \cdots 
\edge{ \gamma_{j_{s}}^{R} } \dir(z_{s}^{R}) = \ti{\iota}(p_{A}) 
\edge{ \gamma_{j_{s+1}}^{R} } \cdots \edge{ \gamma_{j_{r}}^{R} } \dir(z_{r}^{R}),
\end{split}
\end{equation*}
and that
\begin{equation*}
\begin{split}
& p_{B} =\Bigl(
m_{\lamm}=z_{0} \edge{\beta_{q_1}^{\OS}} \cdots 
\edge{\beta_{q_s}^{\OS}} z_{s} \edge{\beta_{j_{s+1}}^{\OS}} \cdots 
      \edge{\beta_{j_r}^{\OS}} z_{r}^{R} = \enp{B}
\Bigr), \\[3mm]
& 
e=\dir(z_{0}) \edge{ \gamma_{q_{1}} } \cdots 
\edge{ \gamma_{q_{s}} } \dir(z_{s}) = \ti{\iota}(p_{A}) 
\edge{ \gamma_{j_{s+1}} } \cdots \edge{ \gamma_{j_{r}} } \dir(z_{r}). 
\end{split}
\end{equation*}
Since $q_{s} \le M$ and $j_{s+1} \ge M+1$, it follows that 
$\ti{\iota}(p_{B})=\dir(z_{s})=\ti{\iota}(p_{A})$. 

Next, we prove that 
%
%
\begin{equation} \label{eq:Theta}
\wt (p_{B}) = \wt (p_{A}) \quad \text{and} \quad
\qwt (p_{B}) = \qwt (p_{A}). 
\end{equation}
Recall from \eqref{eq:A-} and \eqref{eq:qwt} that 
\begin{equation*}
\qwt (p_{B}) = \sum_{j \in B^{-}} \beta_{j}^{\OS} 
\quad \text{and} \quad
\qwt (p_{A}) = \sum_{j \in A^{-}} \beta_{j}^{\OS,R}. 
\end{equation*}
We know from Remark~\ref{rem:dirz} that 
all the edges in 
$e=\dir(z_{0}^{R}) \edge{ \gamma_{j_{1}}^{R} } \cdots 
\edge{ \gamma_{j_{s}}^{R} } \dir(z_{s}^{R}) = \ti{\iota}(p_{A})$ and 
$e=\dir(z_{0}) \edge{ \gamma_{q_{1}} } \cdots 
\edge{ \gamma_{q_{s}} } \dir(z_{s}) = \ti{\iota}(p_{A})$ 
are Bruhat edges, which implies that 
$A^{-},\,B^{-} \subset \bigl\{j_{s+1},\,\dots,\,j_{r}\bigr\}$. 
Since $M+1 \le j_{s+1} < \cdots < j_{r} \le \ell$, 
we see from \eqref{eq:R4} that 
$\gamma_{j_{u}}^{R} = \gamma_{j_{u}}$ for all $s+1 \le u \le r$. 
Therefore, the directed paths 
$\dir(z_{s}^{R}) = \ti{\iota}(p_{A}) 
\edge{ \gamma_{j_{s+1}}^{R} } \cdots \edge{ \gamma_{j_{r}}^{R} } \dir(z_{r}^{R})$ 
and $\dir(z_{s}) = \ti{\iota}(p_{A}) 
\edge{ \gamma_{j_{s+1}} } \cdots \edge{ \gamma_{j_{r}} } \dir(z_{r})$ are identical, 
which implies that $A^{-}=B^{-}$. Since $\beta_{j_{u}}^{\OS, R} = \beta_{j_{u}}^{\OS}$ 
for all $s+1 \le u \le r$ by \eqref{eq:R2}, we obtain $\qwt (p_{B}) = \qwt (p_{A})$. 

Finally, we prove that $\wt (p_{B}) = \wt (p_{A})$; it suffices to show that 
$\enp{B}=\enp{A}$ (see \eqref{eq:wtp}). Since $b_{k}^{R}=b_{k}=0$ 
for all $1 \le k \le M$ by \eqref{eq:bkle2} and \eqref{eq:R5}, we see that
\begin{equation*}
\beta_{k}^{\OS,R}= 
  \pair{(\gamma_{k}^{R})^{\vee}}{-\lng \lambda} \ti{\delta} - 
  (\gamma_{k}^{R})^{\vee}, \qquad
\beta_{k}^{\OS}= 
  \pair{\gamma_{k}^{\vee}}{-\lng \lambda} \ti{\delta} - 
  \gamma_{k}^{\vee}
\end{equation*}
for $1 \le k \le M$, which implies that
\begin{equation*}
r_{\beta_{k}^{\OS,R}} 
  = \bigl(t_{ \pair{(\gamma_{k}^{R})^{\vee}}{-\lng \lambda} \gamma_{k}^{R} }\bigr)
    r_{\gamma_{k}^{R}}, \qquad
r_{\beta_{k}^{\OS}} 
  = \bigl(t_{ \pair{\gamma_{k}^{\vee}}{-\lng \lambda} \gamma_{k}}\bigr)
    r_{\gamma_{k}}.
\end{equation*}
Using these equalities, together with $z_{0}=z_{0}^{R}=m_{\lamm} = t_{\lamm}$, 
we can show by induction on $0 \le u \le s$ that 
\begin{equation*}
z_{u}^{R}= t_{ \dir (z_{u}^{R}) \lng \lambda} \dir (z_{u}^{R}), \qquad
z_{u}=t_{ \dir (z_{u}) \lng \lambda} \dir (z_{u}). 
\end{equation*}
Since $\dir (z_{s}^{R}) = \ti{\iota}(p_{A}) = \dir (z_{s})$, we deduce that
\begin{equation*}
z_{s}^{R}= 
t_{ \dir (z_{s}^{R}) \lng \lambda} \dir (z_{s}^{R}) = 
t_{ \dir (z_{s}) \lng \lambda} \dir (z_{s}) = z_{s}. 
\end{equation*}
Since $\beta_{j_{u}}^{\OS, R} = \beta_{j_{u}}^{\OS}$ 
for all $s+1 \le u \le r$ as seen above, we obtain
\begin{equation*}
\enp{A} = z_{s}^{R}r_{\beta_{j_{s+1}}^{\OS,R}} \cdots r_{\beta_{j_{r}}^{\OS,R}} 
= z_{s}r_{\beta_{j_{s+1}}^{\OS}} \cdots r_{\beta_{j_{r}}^{\OS}} = \enp{B}. 
\end{equation*}
This proves \eqref{eq:Theta}. 

If we define a map $\Theta_{\lex}^{R} : \QBM_{\lex} \rightarrow \QBM_{R}$ 
in exactly the same manner as for the map $\Theta_{R}^{\lex}:\QBM_{R} \rightarrow \QBM_{\lex}$, 
then from the uniqueness of a directed path in $\QB(W)$
whose labels are increasing in a reflection order (see \cite[Theorem~6.4]{LNSSS2}), 
we deduce that both of the composites 
$\Theta_{\lex}^{R} \circ \Theta_{R}^{\lex}$ and 
$\Theta_{R}^{\lex} \circ \Theta_{\lex}^{R}$ are the identity maps. 
This proves the bijectivity of the map $\Theta_{R}^{\lex}$, 
and hence completes the proof of the proposition. 
\end{proof}
%
%
\subsection{Embedding of $\QBw$ into $\QBM_{\lex}$.}
\label{subsec:QBL-QBM}

We keep the notation and setting of the previous subsection. 
Recall that $m_{\lamm} = t_{\lamm}= \pi r_{i_{1}}r_{i_{2}} \cdots r_{i_{\ell}}$ 
is the reduced expression for $m_{\lamm} = t_{\lamm}$ corresponding to 
the (fixed) lex $(-\lamm)$-chain of roots; we know from Lemma~\ref{lem:lex} that 
%
%
\begin{equation} \label{eq:mlamm2}
m_{\lamm}=t_{\lamm}=\pi r_{i_{1}}r_{i_{2}} \cdots r_{i_{\ell}} = 
 \bigl( \ub{ r_{\p{1}} \cdots r_{\p{M}} }{=\mcr{\lng}} \bigr)
 \bigl( \ub{ \pi r_{i_{M+1}} \cdots r_{i_{\ell}} }{=m_{\lambda}} \bigr),
\end{equation}
where $M=\ell(\mcr{\lng})$. 

Let $w \in W^{J}$, and set $L:=\ell(w) \le M$. 
We can take a reduced expression 
$\mcr{\lng} = r_{p_1}r_{p_2} \cdots r_{p_M}$ for $\mcr{\lng}$ 
such that $w=r_{p_{M-L+1}} \cdots r_{p_{M}}$;
%
%
\begin{equation} \label{eq:re}
\mcr{\lng} = \ub{r_{p_1} \cdots r_{p_{M-L}}}{=\mcr{\lng}w^{-1}}
\ub{r_{p_{M-L+1}} \cdots r_{p_{M}}}{=w}.
\end{equation}
Indeed, recall that $\lng = \mcr{\lng} \lngJ$, 
with $\ell(\lng) = \ell(\mcr{\lng}) + \ell(\lngJ)$. 
Since $w \in W^{J}$, we have 
$\ell(w \lngJ) = \ell(w) + \ell(\lngJ)$. Hence it follows that
\begin{align*}
\ell(\mcr{\lng}) + \ell(\lngJ)  
& = \ell(\lng) = \ell(\lng (w\lngJ)^{-1}) + \ell(w \lngJ) \\
& = \ell(\mcr{\lng}w^{-1}) + \ell(w) + \ell(\lngJ),
\end{align*}
so that $\ell(\mcr{\lng}) = \ell(\mcr{\lng}w^{-1}) + \ell(w)$, 
which implies that $\ell(\mcr{\lng}w^{-1}) = M-L$. 
Therefore, if $\mcr{\lng}w^{-1} = r_{p_1} \cdots r_{p_{M-L}}$ is 
a reduced expression for $\mcr{\lng}w^{-1}$, and 
$w= r_{p_{M-L+1}} \cdots r_{p_{M}}$ is a reduced expression for $w$, 
then $\mcr{\lng} = r_{p_1} \cdots r_{p_{M-L}}
r_{p_{M-L+1}} \cdots r_{p_{M}}$ is a reduced expression for $\mcr{\lng}$. 
Now, we set $i_{k}':=\pi^{-1}(p_{k})$ for $1 \le k \le M$; 
we see that 
%
%
\begin{equation} \label{eq:mlamm3}
m_{\lamm} = 
 \bigl( \underbrace{ r_{p_1}r_{p_2} \cdots r_{p_M} }_{=\mcr{\lng}} \bigr)
 \bigl( \underbrace{ \pi r_{i_{M+1}} \cdots r_{i_{\ell}} }_{=m_{\lambda}} \bigr) 
 = \pi r_{i_{1}'} \cdots r_{i_{M}'} r_{i_{M+1}} \cdots r_{i_{\ell}}
\end{equation}
is a reduced expression for $m_{\lamm}$. As in \S\ref{subsec:lex}, 
we construct $\QBM$ from this reduced expression $R$ for $m_{\lamm}$, and 
denote it by $\QBM_{R}$; recall from Proposition~\ref{prop:Theta} the bijection 
$\Theta_{R}^{\lex}:\QBM_{R} \rightarrow \QBM_{\lex}$. 
We set $A_{0}:=\bigl\{1,\,2,\,\dots,\,M-L\bigr\} \subset 
\bigl\{1,\,2,\,\dots,\,\ell\bigr\}$, and consider $p_{A_0}$. 
Using Lemma~\ref{lem:mxi}, we see by direct computation that
\begin{align*}
& z_{0}^{R}=m_{\lamm}= 
  \pi r_{i_{1}'} \cdots r_{i_{M}'} r_{i_{M+1}} \cdots r_{i_{\ell}} =t_{\lamm}, \\
& z_{1}^{R}=
  \pi r_{i_{2}'} \cdots r_{i_{M}'} r_{i_{M+1}} \cdots r_{i_{\ell}} = r_{p_1} t_{\lamm}, \\
& z_{2}^{R}=
  \pi r_{i_{3}'} \cdots r_{i_{M}'} r_{i_{M+1}} \cdots r_{i_{\ell}} = r_{p_2} r_{p_1} t_{\lamm}, \\
& \qquad \vdots\\
& z_{M-L-1}^{R}=
  \pi r_{i_{M-L}'} \cdots r_{i_{M}'} r_{i_{M+1}} \cdots r_{i_{\ell}} = 
  r_{p_{M-L-1}} \cdots r_{p_2} r_{p_1} t_{\lamm}, \\
& z_{M-L}^{R}=
  \pi r_{i_{M-L+1}'} \cdots r_{i_{M}'} r_{i_{M+1}} \cdots r_{i_{\ell}} = 
  \underbrace{r_{p_{M-L}} \cdots r_{p_2} r_{p_1}}_{=w\mcr{\lng}^{-1}} t_{\lamm} = m_{w\lambda}. 
\end{align*}
From these, we deduce that $\dir(z_{K}^{R})= r_{p_{K}} \cdots r_{p_{2}}r_{p_1}$ 
for $0 \le K \le M-L$, and that
%
%
\begin{equation} \label{eq:dp5}
e \edge{\gamma_{1}^{R}} r_{p_1}
\edge{\gamma_{2}^{R}} 
\cdots  
\edge{\gamma_{M-L-1}^{R}} r_{p_{M-L-1}} \cdots r_{p_{2}}r_{p_{1}}
\edge{\gamma_{M-L}^{R}} w\mcr{\lng}^{-1}
\end{equation}
is a directed path from $e$ to $w\mcr{\lng}^{-1}$ 
in the quantum Bruhat graph $\QB(W)$; 
since $\ell(\dir(z_{K})) = \ell(\dir(z_{K-1})) + 1$ 
for $1 \le K \le M-L$, all the edges in this directed path 
are Bruhat edges. Hence we obtain
%
%
\begin{equation} \label{eq:pA0}
p_{A_{0}}=
\Bigl(
m_{\lamm} = 
  z_{0}^{R} \edge{\beta_{1}^{\OS,R}} 
  z_{1}^{R} \edge{\beta_{2}^{\OS,R}} \cdots 
      \edge{\beta_{M-L}^{\OS,R}} z_{M-L}^{R}=m_{w\lambda}
\Bigr) \in \QBM_{R}.
\end{equation}

Since $m_{w\lambda}=w \mcr{\lng}^{-1} t_{\lng \lambda} = w m_{\lambda}$ 
by Lemma~\ref{lem:mxi}, we have 
%
%
\begin{equation} \label{eq:mw}
m_{w\lambda} = 
 \bigl( \underbrace{ r_{p_{M-L+1}} \cdots r_{p_M} }_{=w} \bigr)
 \bigl( \underbrace{ \pi r_{i_{M+1}} \cdots r_{i_{\ell}} }_{=m_{\lambda}} \bigr) 
 = \pi r_{i_{M-L+1}'} \cdots r_{i_{M}'} r_{i_{M+1}} \cdots r_{i_{\ell}}; 
\end{equation}
since \eqref{eq:mlamm3} is a reduced expression (for $m_{\lamm}$), 
we see that \eqref{eq:mw} is also a reduced expression (for $m_{w\lambda}$). 
Let us construct $\QBw$ from this reduced expression. 
Namely, for a subset $B=\bigl\{j_{1} < j_{2} < \cdots < j_{r}\bigr\}
\subset \bigl\{M-L+1,\,M-L+2,\,\dots,\,\ell\bigr\}$, we define
\begin{equation*}
y_{0}^{R} = m_{w\lambda}, \qquad
y_{k}^{R} = y_{0}r_{\beta_{j_1}^{\OS,R}} \cdots r_{\beta_{j_{k}}^{\OS,R}} \quad 
 \text{for $1 \le k\le r$}, 
\end{equation*}
where $\beta_{k}^{\OS,R}$, $M-L+1 \le k \le \ell$, are 
those used in the definition of $\QBM_{R}$, and set 
\begin{equation*}
p_{B}:= \Bigl(
m_{w\lambda}=y_{0}^{R} \edge{\beta_{j_1}^{\OS,R}} y_{1}^{R}
\edge{\beta_{j_2}^{\OS,R}} y_{2}^{R} \edge{\beta_{j_3}^{\OS,R}}
\cdots  
\edge{\beta_{j_r}^{\OS,R}} y_{r}^{R}\Bigr).
\end{equation*}
Then, $p_{B} \in \QBw$ if 
\begin{equation*}
w \mcr{\lng}^{-1}=\dir(y_{0}^{R}) \edge{ \gamma_{j_1}^{R} }
\dir(y_{1}^{R}) \edge{ \gamma_{j_2}^{R} } 
\cdots \edge{ \gamma_{j_r}^{R} } \dir(y_{r}^{R})
\end{equation*}
is a directed path in the quantum Bruhat graph $\QB(W)$. 

Since $\enp{A_{0}}=m_{w\lambda}$, 
we can ``concatenate'' $p_{A_{0}}$ with an arbitrary $p_{B} \in \QBw$, 
which is just $p_{A_{0} \sqcup B}$; 
we see easily that $p_{A_{0} \sqcup B} \in \QBM_{R}$. 
%
%
\begin{lem} \label{lem:embed}
There exists an embedding 
$\QBw \hookrightarrow \QBM_{R}$, which maps 
$p_{B} \in \QBw$ to $p_{A_{0} \sqcup B} \in \QBM_{R}$. 
Moreover, 
$\wt(p_{A_{0} \sqcup B}) = \wt (p_{B})$, and 
$\qwt(p_{A_{0} \sqcup B}) = \qwt (p_{B})$ {\rm(}and hence
$\degr(\qwt(p_{A_{0} \sqcup B})) = 
 \degr(\qwt (p_{B}))${\rm)}. 
\end{lem}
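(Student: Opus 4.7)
The plan is to verify four properties in sequence: (i) that $p_{A_0 \sqcup B}$ actually lies in $\QBM_{R}$, (ii) that the assignment $p_B \mapsto p_{A_0 \sqcup B}$ is injective, (iii) weight preservation, and (iv) quantum weight preservation (from which the degree equality follows immediately). The essential content has already been packaged into the construction preceding the lemma — in particular the explicit path \eqref{eq:dp5} and the identity $\enp{A_0} = m_{w\lambda}$ — so the argument should be a short verification rather than a genuinely new computation.

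For (i) I would simply concatenate. By construction $\enp{A_{0}} = z_{M-L}^{R} = m_{w\lambda}$, with $\dir$-part $w\mcr{\lng}^{-1}$, so the $z$-sequence of $p_{A_0 \sqcup B}$ is obtained by extending the $z$-sequence of $p_{A_0}$ via exactly the reflections $r_{\beta_{j_k}^{\OS,R}}$ defining $p_B$. Consequently its $\dir$-sequence is the concatenation of \eqref{eq:dp5} with the $\dir$-path of $p_B$, and the latter is a directed path in $\QB(W)$ by the hypothesis $p_B \in \QBw$. This is exactly the condition Definition~\ref{dfn:QBX} requires for membership in $\QBM_{R}$. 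Injectivity in (ii) is immediate because $A_0$ is fixed and disjoint from the index set $\{M-L+1,\ldots,\ell\}$ in which $B$ lives, so $B$ can be recovered from $A_0 \sqcup B$.

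For (iii) the endpoint $\enp{A_0 \sqcup B}$ equals $\enp{B}$, since after the common initial segment ending at $m_{w\lambda}$ the remaining reflections are those used in $p_B$; hence by \eqref{eq:wtp} we get $\wt(p_{A_0 \sqcup B}) = \wt(\enp{A_0 \sqcup B}) = \wt(\enp{B}) = \wt(p_B)$.

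For (iv) the key observation, already noted just after \eqref{eq:pA0}, is that every edge in \eqref{eq:dp5} is a Bruhat edge because $\ell(\dir(z_{k}^{R}))$ increases by $1$ at each step. Therefore no index in $A_0$ contributes to the ``minus'' set, which gives $(A_0 \sqcup B)^{-} = B^{-}$. Combined with $\beta_{j}^{\OS,R}$ being the same in $p_{A_0 \sqcup B}$ and $p_B$ for $j \in B$, definition \eqref{eq:qwt} yields $\qwt(p_{A_0 \sqcup B}) = \qwt(p_B)$, and the degree equality follows from \eqref{eq:degr}. I do not expect any genuine obstacle: the only non-cosmetic point is that the initial segment \eqref{eq:dp5} is made entirely of Bruhat edges, and this was already established at the construction stage.
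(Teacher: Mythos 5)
Your proposal is correct and follows essentially the same route as the paper's proof: membership in $\QBM_{R}$ by concatenation (which the paper treats as part of the setup preceding the lemma), injectivity since $A_0$ is fixed and disjoint from the range of $B$, weight preservation via $\enp{A_0 \sqcup B} = \enp{B}$, and quantum-weight preservation because all edges of \eqref{eq:dp5} are Bruhat so $(A_0 \sqcup B)^- = B^-$.
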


\begin{proof}
The injectivity of the map is obvious. 
Since $\enp{A_{0} \sqcup B} = \enp{B}$ by the definition, 
we have $\wt(p_{A_{0} \sqcup B}) = \wt (p_{B})$. 
Because all the edges in the directed path \eqref{eq:dp5} 
are Bruhat edges, we see from the definition \eqref{eq:A-} 
that $(A_{0} \sqcup B)^{-}= B^{-}$. Hence we obtain
$\qwt(p_{A_{0} \sqcup B}) = \qwt (p_{B})$ by the definition 
\eqref{eq:qwt} of $\qwt$. This proves the lemma. 
\end{proof}

We set
\begin{equation*}
\QBM_{R, w}:=
 \bigl\{ p_{A} \in \QBM_{R} \mid 
  \bigl\{1,\,2,\,\dots,\,M-L\bigr\} \subset A
 \bigr\}.
\end{equation*}
We see from the argument above that 
$\QBM_{R, w}$ is identical to the image of 
the embedding $\QBw \hookrightarrow \QBM_{R}$ of Lemma~\ref{lem:embed}. 
%
%
\begin{lem} \label{lem:init}
Let $p_{A} \in \QBM_{R}$. Then, $p_{A} \in \QBM_{R, w}$ if and only if
$\ti{\iota}(p_{A}) \ge w \mcr{\lng}^{-1}$ with respect to 
the Bruhat order $\ge$ on $W$. 
\end{lem}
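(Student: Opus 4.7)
The plan is to split the equivalence into its two directions, both anchored in the explicit reduced expression \eqref{eq:re} of $\mcr{\lng}$ that places a reduced expression for $w$ at the end. The cornerstone identification is the inversion-set formula
\begin{equation*}
\bigl\{\beta \in \Phi^{+} \mid w\mcr{\lng}^{-1}(\beta) \in \Phi^{-}\bigr\} = \bigl\{\gamma_{1}^{R}, \gamma_{2}^{R}, \ldots, \gamma_{M-L}^{R}\bigr\},
\end{equation*}
which is obtained by reading off the reduced expression $w\mcr{\lng}^{-1} = r_{p_{M-L}} r_{p_{M-L-1}} \cdots r_{p_{1}}$ from \eqref{eq:re}, combining it with the standard parametrization of the inversions of a Weyl group element via a reduced word, and matching the resulting roots with \eqref{eq:gamR}.

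For the ``if'' direction, assume $\{1,\ldots, M-L\} \subset A$. The computation leading to \eqref{eq:pA0} and the directed path \eqref{eq:dp5} show that the initial segment of the path of $p_{A}$ arrives at $\dir(z_{M-L}^{R}) = w\mcr{\lng}^{-1}$ after exactly $M-L$ steps. By Remark~\ref{rem:dirz}, every subsequent edge on the subpath ending at $\ti{\iota}(p_{A}) = \dir(z_{s}^{R})$ is a Bruhat edge, hence increases Bruhat position by one, and therefore $\ti{\iota}(p_{A}) \ge w\mcr{\lng}^{-1}$.

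For the ``only if'' direction, suppose $\ti{\iota}(p_{A}) \ge w\mcr{\lng}^{-1}$. By \cite[Theorem~6.4]{LNSSS2} there is a unique directed path in $\QB(W)$ from $w\mcr{\lng}^{-1}$ to $\ti{\iota}(p_{A})$ whose labels are strictly increasing in the reflection order $\prec_{R}$ of \eqref{eq:precR}, and by Remark~\ref{rem:QB}\,(3) this path consists entirely of Bruhat edges. Its first label $\delta$ realizes a Bruhat ascent $w\mcr{\lng}^{-1} \to w\mcr{\lng}^{-1} r_{\delta}$, forcing $w\mcr{\lng}^{-1}(\delta) \in \Phi^{+}$; by the inversion-set formula above, $\delta \notin \{\gamma_{1}^{R},\ldots,\gamma_{M-L}^{R}\}$, and since these are exactly the first $M-L$ elements of the total order $\prec_{R}$, one has $\gamma_{M-L}^{R} \prec_{R} \delta$. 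By $\prec_{R}$-monotonicity all labels of this path then satisfy $\gamma_{M-L}^{R} \prec_{R} \cdot$.

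Concatenating the inc-label Bruhat path $e \to w\mcr{\lng}^{-1}$ from \eqref{eq:dp5} (with labels $\gamma_{1}^{R} \prec_{R} \cdots \prec_{R} \gamma_{M-L}^{R}$) with the inc-label Bruhat path from $w\mcr{\lng}^{-1}$ to $\ti{\iota}(p_{A})$ just produced yields a $\prec_{R}$-increasing Bruhat path from $e$ to $\ti{\iota}(p_{A})$ of length $\ell(\ti{\iota}(p_{A}))$. The uniqueness statement in \cite[Theorem~6.4]{LNSSS2} then forces this concatenation to coincide with the initial segment $e = \dir(z_{0}^{R}) \to \cdots \to \dir(z_{s}^{R}) = \ti{\iota}(p_{A})$ of the path underlying $p_{A}$; hence its first $M-L$ labels are $\gamma_{1}^{R},\ldots,\gamma_{M-L}^{R}$, giving $\{1,\ldots, M-L\} \subset A$. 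The main obstacle is the inversion-set identity displayed above, which is the precise mechanism by which the tailored form \eqref{eq:re} of the reduced expression for $\mcr{\lng}$ (placing the subword for $w$ at the end) interacts with the reflection order $\prec_{R}$ to convert Bruhat dominance by $w\mcr{\lng}^{-1}$ into the combinatorial condition $\{1,\ldots,M-L\} \subset A$.
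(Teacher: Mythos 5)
Your proof is correct, and the interesting difference is in the harder implication (your ``only if'' direction, which the paper calls the ``if'' direction). For the easy direction both arguments are identical: they read $\dir(z_{M-L}^R)=w\mcr{\lng}^{-1}$ off \eqref{eq:dp5} and appeal to Remark~\ref{rem:dirz} to see that all subsequent edges up to $\ti{\iota}(p_A)$ are Bruhat. For the hard direction, the paper invokes the chain property of the Bruhat order (\cite[Theorem~2.2.6]{BB}) to produce some shortest Bruhat path from $w\mcr{\lng}^{-1}$ to $\ti{\iota}(p_A)$, concatenates it with \eqref{eq:dp5} to obtain a shortest path \eqref{eq:dp6} from $e$ to $\ti{\iota}(p_A)$, and then uses only the \emph{lexicographic minimality} aspect of \cite[Theorem~6.4]{LNSSS2} to force the initial segment \eqref{eq:dp7} of $p_A$ to start with $\gamma_1^R,\dots,\gamma_{M-L}^R$. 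Your argument replaces the chain property with an explicit inversion-set computation: from the reduced expression \eqref{eq:re} and \eqref{eq:gamR}, the inversion set of $w\mcr{\lng}^{-1}$ is exactly $\{\gamma_1^R,\dots,\gamma_{M-L}^R\}$, which forces the first label of the $\prec_R$-increasing shortest path from $w\mcr{\lng}^{-1}$ to $\ti{\iota}(p_A)$ to lie strictly after $\gamma_{M-L}^R$ in $\prec_R$. This lets you verify directly that the concatenated path is $\prec_R$-increasing, so you can invoke the \emph{uniqueness} aspect of \cite[Theorem~6.4]{LNSSS2} rather than lex-minimality. The paper's route is a little shorter (no need to check increase of labels across the concatenation, since lex-minimality is applied to the increasing path \eqref{eq:dp7} against an arbitrary competitor \eqref{eq:dp6}); yours is more constructive and isolates the useful identity $\{\beta\in\Phi^+\mid w\mcr{\lng}^{-1}\beta\in\Phi^-\}=\{\gamma_1^R,\dots,\gamma_{M-L}^R\}$ as the real reason the tailored reduced expression \eqref{eq:re} does its job. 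Both are correct; the logical content is the same.
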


\begin{proof}
First, we prove the ``only if'' part. 
Since $p_{A} \in \QBM_{R, w}$, it follows that 
$A$ is of the form: $A=\bigl\{1,\,2,\,\dots,\,M-L,\,j_{1},\,\dots,\,j_{r}\bigr\}$
for $M-L+1 \le j_{1} < \cdots < j_{r} \le \ell$; 
we set $j_{0}=0$ by convention. Take $0 \le s \le r$ such that 
$j_{s} \le M$ and $j_{s+1} \ge M+1$. Then, by \eqref{eq:dp5} and 
the definition of $\ti{\iota}(p_{A})$, we have a directed path 
\begin{equation*}
e \edge{\gamma_{1}^{R}} r_{p_1}
\edge{\gamma_{2}^{R}} \cdots \edge{\gamma_{M-L}^{R}} w\mcr{\lng}^{-1} 
\edge{ \gamma_{j_{1}}^{R} } \cdots \edge{ \gamma_{j_{s}}^{R} } \ti{\iota}(p_{A})
\end{equation*}
in the quantum Bruhat graph $\QB(W)$. Since all the edges in this directed path 
are Bruhat edges (see Remark~\ref{rem:dirz}), we obtain 
$\ti{\iota}(p_{A}) \ge w\mcr{\lng}^{-1}$, as desired. 

Next, we prove the ``if'' part. 
Assume that $\ti{\iota}(p_{A}) \ge w \mcr{\lng}^{-1}$, 
with $A=\bigl\{j_{1},\,\dots,\,j_{r}\bigr\} \subset 
\bigl\{1,\,2,\,\dots,\,\ell\bigr\}$. If we take $0 \le s \le \ell$ such that 
$j_{s} \le M$ and $j_{s+1} \ge M+1$, then we have a shortest directed path 
%
%
\begin{equation} \label{eq:dp7}
e=\dir(z_{0}^{R}) \edge{\gamma_{j_1}^{R}} 
  \dir(z_{1}^{R}) \edge{\gamma_{j_2}^{R}} \cdots \edge{\gamma_{j_s}^{R}} 
  \dir(z_{s}^{R}) = \ti{\iota}(p_{A}) 
\end{equation}
in the quantum Bruhat graph $\QB(W)$ all of whose edges are Bruhat edges 
(see Remark~\ref{rem:dirz}); note that $s=\ell(\ti{\iota}(p_{A}))$. 
Here, because $\ti{\iota}(p_{A}) \ge w \mcr{\lng}^{-1}$ 
with respect to the Bruhat order on $W$, we deduce 
by the chain property of the Bruhat order (see \cite[Theorem~2.2.6]{BB}) that 
there exists a directed path $w \mcr{\lng}^{-1} = x_{0} \edge{ \xi_1 } \cdots 
 \edge{ \xi_{s-M+L} } x_{s-M+L} = \ti{\iota}(p_{A})$ 
of length $\ell(\ti{\iota}(p_{A})) - \ell(w \mcr{\lng}^{-1}) = s-(M-L)$ 
from $w \mcr{\lng}^{-1}$ to $\ti{\iota}(p_{A})$ 
in the quantum Bruhat graph $\QB(W)$ all of whose edges are Bruhat edges. 
Concatenating this directed path with the directed path \eqref{eq:dp5}, 
we get the directed path
%
%
\begin{equation} \label{eq:dp6}
e \edge{\gamma_{1}^{R}} r_{p_1}
\edge{\gamma_{2}^{R}} \cdots \edge{\gamma_{M-L}^{R}} w\mcr{\lng}^{-1} 
= x_{0} \edge{ \xi_1 } \cdots \edge{ \xi_{s-M+L} } x_{s-M+L} = \ti{\iota}(p_{A}). 
\end{equation}
Since the length of this directed path is equal to 
$s=\ell(\ti{\iota}(p_{A})) - \ell(e)$, 
this directed path is also a shortest directed path from 
$e$ to $\ti{\iota}(p_{A})$ in the quantum Bruhat graph $\QB(W)$. 
Because the labels in the directed path \eqref{eq:dp7} 
are strictly increasing with respect to the reflection 
order $\prec_{R}$ (see \eqref{eq:precR}), 
that is, $\gamma_{j_1}^{R} \prec_{R} \cdots \prec_{R} \gamma_{j_s}^{R}$, 
it follows from \cite[Theorem~6.4]{LNSSS2} that 
the directed path \eqref{eq:dp7} is lexicographically minimal among
all shortest directed paths from $e$ to $\ti{\iota}(p_{A})$; 
in particular, the directed path \eqref{eq:dp7} is less than 
or equal to the directed path \eqref{eq:dp6}, 
which implies that $j_{1}=1,\,j_{2}=2,\,\dots,\,j_{M-L}=M-L$. 
Thus, we obtain $\bigl\{1,\,2,\,\dots,\,M-L\bigr\} \subset A$, 
and hence $p_{A} \in \QBM_{R, w}$. This completes the proof of the lemma. 
\end{proof}

From Lemma~\ref{lem:init} (together with the comment preceding it), 
Lemma~\ref{lem:embed}, and Proposition~\ref{prop:Theta}, 
we obtain the following proposition. 
%
%
\begin{prop} \label{prop:embed}
The image of $\QBw$ under the composite
\begin{equation*}
\QBw \stackrel{\text{\rm Lemma~\ref{lem:embed}}}{\hookrightarrow} 
\QBM_{R} \stackrel{\Theta_{R}^{\lex}}{\longrightarrow} \QBM_{\lex}
\end{equation*}
is identical to
%
%
\begin{equation} \label{eq:QBMlexw}
\QBM_{\lex, w}:=\bigl\{ p \in \QBM_{\lex} \mid \ti{\iota}(p) \ge w \mcr{\lng}^{-1} \bigr\}.
\end{equation}
Hence we have
%
%
\begin{equation} \label{eq:prf1}
\sum_{p \in \QBM_{\lex,w}} e^{\wt(p)} q^{\degr(\qwt(p))} = 
\sum_{p \in \QBw} e^{\wt(p)} q^{\degr(\qwt(p))} = \Mac{w\lambda}.
\end{equation}
\end{prop}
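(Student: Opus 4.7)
The plan is to assemble three already-established ingredients into a short deduction, with no new combinatorial work required.

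First, I would make explicit the image of the embedding $\QBw \hookrightarrow \QBM_{R}$ from Lemma~\ref{lem:embed}. By construction, this map sends $p_{B}$ to $p_{A_{0} \sqcup B}$ with $A_{0} = \{1,2,\dots,M-L\}$, so its image consists precisely of those $p_{A} \in \QBM_{R}$ with $\{1,2,\dots,M-L\} \subset A$, i.e., the set $\QBM_{R,w}$. By Lemma~\ref{lem:init}, $\QBM_{R,w}$ equals $\{p_{A} \in \QBM_{R} \mid \ti{\iota}(p_{A}) \ge w\mcr{\lng}^{-1}\}$.

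Next, I would apply the bijection $\Theta_{R}^{\lex}:\QBM_{R} \rightarrow \QBM_{\lex}$ of Proposition~\ref{prop:Theta}, using the crucial fact established there that $\ti{\iota}(\Theta_{R}^{\lex}(p)) = \ti{\iota}(p)$. Hence the image of $\QBM_{R,w}$ under $\Theta_{R}^{\lex}$ is
\begin{equation*}
\bigl\{ \Theta_{R}^{\lex}(p) \mid p \in \QBM_{R},\ \ti{\iota}(p) \ge w\mcr{\lng}^{-1} \bigr\}
= \bigl\{ q \in \QBM_{\lex} \mid \ti{\iota}(q) \ge w\mcr{\lng}^{-1} \bigr\}
= \QBM_{\lex, w},
\end{equation*}
which is the first assertion of the proposition.

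For \eqref{eq:prf1}, I would combine the weight/quantum-weight preservation statements: Lemma~\ref{lem:embed} gives $\wt(p_{A_{0} \sqcup B}) = \wt(p_{B})$ and $\qwt(p_{A_{0} \sqcup B}) = \qwt(p_{B})$, while Proposition~\ref{prop:Theta} gives $\wt(\Theta_{R}^{\lex}(p)) = \wt(p)$ and $\qwt(\Theta_{R}^{\lex}(p)) = \qwt(p)$. Chaining these, the bijection between $\QBw$ and $\QBM_{\lex,w}$ preserves both $\wt$ and $\degr \circ \qwt$, so the generating functions over the two sets coincide. The final equality with $\Mac{w\lambda}$ is then immediate from the Orr--Shimozono formula (Theorem~\ref{thm:OS}) applied to $\mu = w\lambda$.

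There is no real obstacle: all of the hard work has been carried out in Lemma~\ref{lem:init} (characterizing $\QBM_{R,w}$ by the initial direction) and in Proposition~\ref{prop:Theta} (showing that $\Theta_{R}^{\lex}$ preserves the invariants $\wt$, $\qwt$, and $\ti{\iota}$). The proof is essentially a bookkeeping exercise chaining these results together with the Orr--Shimozono formula.
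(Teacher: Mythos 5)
Your proposal is correct and follows exactly the route the paper takes: the paper's own proof is simply the one-line statement that the proposition follows from Lemma~\ref{lem:init} (together with the remark preceding it that $\QBM_{R,w}$ is the image of the embedding), Lemma~\ref{lem:embed}, and Proposition~\ref{prop:Theta}, combined with the Orr--Shimozono formula (Theorem~\ref{thm:OS}). You have simply spelled out that chain in full detail, which is a faithful expansion of the argument.
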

%
%
\subsection{Bijection between $\QBM_{\lex}$ and $\QLS(\lambda)$.}
\label{subsec:QB-QLS}

As in the previous subsection, we fix a lex $(-\lamm)$-chain of roots
%
%
\begin{equation} \label{eq:lex1}
A_{\circ}=A_{0} \edge{-\gamma_{1}} A_{1} 
\edge{-\gamma_{2}} \cdots \edge{-\gamma_{\ell}} A_{\ell}=A_{\lamm}, 
\end{equation}
and let $m_{\lamm}=\pi r_{i_1}r_{i_2} \cdots r_{i_{\ell}}$ be 
the corresponding reduced expression for $m_{\lamm}$ under \eqref{eq:1to1}. 
We construct $\CA(-\lamm)$ from this reduced expression 
$m_{\lamm}=\pi r_{i_1}r_{i_2} \cdots r_{i_{\ell}}$, 
which we denote by $\CA(-\lamm)_{\lex}$;
recall from Remark~\ref{rem:betaLPOS} and \eqref{eq:bk} that 
$\gamma_{k}=\gamma_{k}^{\OS}=
 \gamma_{k}^{\Le}=(\ol{\beta_{k}^{\Le}})^{\vee} \in -\lng(\Phi^{+} \setminus \Phi^{+}_{J})$ 
 for all $1 \le k \le \ell$. We set (see Remark~\ref{rem:betaLPOS})
%
%
\begin{equation} \label{eq:dk}
d_{k}:=\frac{b_{k}}{ \bpair{ \ol{\beta_{k}^{\Le}} }{-\lamm} } 
  = 1 - \frac{a_{k}}{ \bpair{ \ol{\beta_{k}^{\OS}} }{\lamm} }
\qquad \text{for $1 \le k \le \ell$}. 
\end{equation}
Because $m_{\lamm}=\pi r_{i_1}r_{i_2} \cdots r_{i_{\ell}}$ is the reduced 
expression corresponding to the lex $(-\lamm)$-chain of roots, 
it follows from \eqref{eq:bkle} that
\begin{equation*}
0 \le d_{1} \le d_{2} \le \cdots \le d_{\ell} < 1. 
\end{equation*}
%
%
\begin{rem} \label{rem:prec}
Let $1 \le k < p \le \ell$ be such that $d_{k}=d_{p}$. 
Then we know from \cite[Remark~6.5]{LNSSS2} that 
$\gamma_{k} \prec \gamma_{p}$ 
in the reflection order $\prec$ (see \eqref{eq:prec}). 
\end{rem}

In the following, 
we define a map $\Xi:\QBM_{\lex} \rightarrow \QLS(\lambda)$ 
(resp., $\Pi:\CA(-\lamm)_{\lex} \rightarrow \QLS(\lambda)$; see \cite[\S6.1]{LNSSS2}).
Let $A=\bigl\{j_{1} < \cdots < j_{r}\bigr\} \subset 
\bigl\{1,\,\dots,\,\ell\bigr\}$ be such that 
\begin{equation*}
p_{A}=\Bigl(
m_{\lamm} = z_{0} \edge{\beta_{j_1}^{\OS}} 
z_{1} \edge{\beta_{j_2}^{\OS}} \cdots \edge{\beta_{j_r}^{\OS}} z_{r}
\Bigr)
\end{equation*}
is an element of $\QBM_{\lex}$ (resp., $A \in \CA(-\lamm)_{\lex}$); 
if we set $x_{k}:=r_{\gamma_{j_1}} \cdots r_{\gamma_{j_k}} = \dir(z_{k}) \in W$ 
for $0 \le k \le r$, then 
\begin{equation*}
e=x_{0} \edge{\gamma_{j_1}} x_{1} \edge{\gamma_{j_2}} \cdots 
\edge{\gamma_{j_r}} x_{r}
\end{equation*}
is a directed path in the quantum Bruhat graph $\QB(W)$. 
Take $0=u_{0} \le u_{1} < u_{2} < \cdots < u_{s-1} < u_{s}=r$ (with $s \ge 1$) 
in such a way that
%
%
\begin{equation} \label{eq:ap}
\begin{split}
& \underbrace{0 = d_{j_1} = \cdots =d_{j_{u_1}}}_{=:\sigma_{0}} < 
  \underbrace{d_{j_{u_1+1}}= \cdots = d_{j_{u_2}}}_{=:\sigma_{1}} < \\[1.5mm]
& \hspace{20mm}
  \underbrace{d_{j_{u_2+1}}= \cdots = d_{j_{u_3}}}_{=:\sigma_{2}} < 
  \cdots \cdots \cdots 
  < \underbrace{d_{j_{u_{s-1}+1}}= \cdots =d_{j_{r}}}_{=:\sigma_{s-1}} < 1=:\sigma_{s};
\end{split}
\end{equation}
note that $u_{1} = 0$ if $d_{j_1} > 0$. 
We set $w_{p}':=x_{u_p}$ for $1 \le p \le s-1$, and 
$w_{s}':=x_{r}$. For each $1 \le p \le s-1$, we have 
a directed path 
\begin{equation*}
w_{p}'=x_{u_p} \edge{\gamma_{j_{u_p+1}}} 
x_{u_p+1} \edge{\gamma_{j_{u_p+2}}} \cdots 
\edge{\gamma_{j_{u_{p+1}}}} x_{u_{p+1}}=w_{p+1}'
\end{equation*}
in the quantum Bruhat graph $\QB(W)$. 
We claim that this directed path is a shortest directed path 
from $w_{p}'$ to $w_{p+1}'$. Indeed, 
since $d_{j_{u_p+1}}= \cdots = d_{j_{u_{p+1}}}$ by \eqref{eq:ap}, 
it follows from Remark~\ref{rem:prec} that 
$\gamma_{j_{u_p+1}} \prec \cdots \prec \gamma_{j_{u_{p+1}}}$ 
in the reflection order $\prec$ (see \eqref{eq:prec}). 
Therefore, we deduce from \cite[Theorem~6.4]{LNSSS2} that 
the directed path above is a shortest directed path 
from $w_{p}'$ to $w_{p+1}'$, as desired. Hence it follows that 
%
%
\begin{equation} \label{eq:achain}
\begin{split}
& w_{p}:=w_{p}'\lng=x_{u_p}\lng 
  \edger{-\lng\gamma_{j_{u_p+1}}} x_{u_p+1}\lng 
  \edger{-\lng\gamma_{j_{u_p+2}}} \\[1.5mm]
& \hspace{50mm} \cdots \cdots 
  \edger{-\lng\gamma_{j_{u_{p+1}}}} 
  x_{u_{p+1}}\lng=w_{p+1}'\lng=:w_{p+1}
\end{split}
\end{equation}
is also a shortest directed path in the quantum Bruhat graph $\QB(W)$, 
where $-\lng\gamma_{j_{u}} \in \Phi^{+} \setminus \Phi^{+}_{J}$ 
for all $u_{p}+1 \le u \le u_{p+1}$ 
since $\gamma_{j_{u}} \in -\lng\bigl(\Phi^{+} \setminus \Phi^{+}_{J}\bigr)$ 
as mentioned at the beginning of this subsection.
Moreover, for $u_p+1 \le u \le u_{p+1}$, we have 
\begin{equation*}
\sigma_{p}\pair{ -\lng\gamma_{j_{u}}^{\vee} }{\lambda} = 
d_{j_{u}}\pair{ \gamma_{j_{u}}^{\vee} }{-\lamm} = 
\frac{b_{j_u}}{ \bpair{ \ol{\beta_{j_u}^{\Le}} }{-\lamm} } 
\times \bpair{ \ol{ \beta_{j_u}^{\Le} } }{-\lamm} = b_{j_u} \in \BZ.
\end{equation*}
Hence the directed path \eqref{eq:achain} is 
a directed path in $\QB_{\sigma_p\lambda}(W)$. 
We deduce from \cite[Lemma~6.1]{LNSSS1} that 
there exists a directed path from $\mcr{w_{p+1}} = \mcr{w_{p+1}}^{J}$ to 
$\mcr{w_{p}} = \mcr{w_{p}}^{J}$ in $\QB_{\sigma_{p}\lambda}(W^{J})$. 
Therefore, we conclude that 
\begin{equation*}
\eta:=
(\mcr{w_{1}},\,\mcr{w_{2}},\,\dots,\,\mcr{w_{s}} \,;\, 
 \sigma_{0},\,\sigma_{1},\,\dots,\,\sigma_{s}) 
\in \QLS(\lambda);
\end{equation*}
we set $\Xi(p_{A}):=\eta$. 

\begin{rem} \label{rem:iota}
Keep the setting above. Because
$0 = d_{j_1} = \cdots =d_{j_{u_1}} < d_{j_{u_1 + 1}}$ by the definition of $u_{1}$, 
we see from \eqref{eq:bkle2} that $j_{u_{1}} \le M$ and $j_{u_1 +1} \ge M+1$, 
where $\ell(\mcr{\lng})=M$. Therefore, by definition \eqref{eq:tii}, 
$\ti{\iota}(p_{A})$ is just $\dir (z_{u_1}) = x_{u_{1}} = w_{1}'$. 
Hence we obtain 
%
%
\begin{equation} \label{eq:iota}
\iota(\Xi(p_{A})) = \iota(\eta) = \mcr{w_{1}} = \mcr{w_{1}'\lng} = 
\mcr{\ti{\iota}(p_{A})\lng}.
\end{equation}
\end{rem}
%
%
%
\begin{prop}[{\cite[Proposition~6.8 and Theorem~7.3]{LNSSS2}}] \label{prop:Pi}
The map $\Pi:\CA(-\lamm)_{\lex} \rightarrow \QLS(\lambda)$ is bijective. 
Moreover, for every $A \in \CA(-\lamm)_{\lex}$, 
\begin{equation} \label{eq:Pi-wtht}
\wt(\Pi(A)) = - \wt(A) \qquad \text{\rm and} \qquad 
\Deg (\Pi(A)) = - \Ht(A). 
\end{equation}
\end{prop}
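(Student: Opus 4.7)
The plan is to verify four properties of $\Pi$: well-definedness as a QLS path, the weight identity, the degree identity, and bijectivity. Since the construction of $\Pi$ has effectively been spelled out above for the twin map $\Xi$ (the two agreeing under the bijection of Lemma~\ref{lem:A-QB}), what remains is the verification.

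For well-definedness, the directed path \eqref{eq:achain} sits in $\QB(W)$ with labels $-\lng\gamma_{j_u} \in \Phi^+ \setminus \Phi^+_J$, and the integrality condition $\sigma_p \pair{(-\lng\gamma_{j_u})^\vee}{\lambda} = b_{j_u} \in \BZ$ places it in $\QB_{\sigma_p\lambda}(W)$. Projection via \cite[Lemma~6.1]{LNSSS1} then yields a directed path in $\QB_{\sigma_p\lambda}(W^J)$ from $\mcr{w_{p+1}}$ to $\mcr{w_p}$, so the QLS axiom holds; strict increase of the $\sigma_p$ and distinctness of consecutive directions are immediate from construction~\eqref{eq:ap} together with the shortest-path property.

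For the weight identity, since $\mcr{w_p}\lambda = w_p'\lng\lambda$ (as $W_J$ stabilizes $\lambda$ and $\lng = \mcr{\lng}\lngJ$), one has $\wt(\Pi(A)) = \sum_p (\sigma_p - \sigma_{p-1}) w_p'\lng\lambda$. Expanding $-\wt(A) = -r_{\beta_{j_1}^{\Le}}\cdots r_{\beta_{j_r}^{\Le}}\cdot\lamm$ via Remark~\ref{rem:refl} into a telescoping sum, and regrouping contributions according to the blocks of constant $d_{j_k}$, reproduces the same expression. For the degree identity, I would use \cite[Proposition~4.1]{LNSSS2} to evaluate $\wt_\lambda(\mcr{w_{p+1}} \Rightarrow \mcr{w_p})$ on (the projection of) the canonical path~\eqref{eq:achain}: the quantum edges therein correspond precisely to indices $j_u \in A_-$ with $d_{j_u} = \sigma_p$, each contributing $\pair{(-\lng\gamma_{j_u})^\vee}{\lambda}$, and the weighting $(1-\sigma_p)$ converts this into $\pair{\gamma_{j_u}^\vee}{-\lamm} - b_{j_u}$; summing over $p$ then matches $\Ht(A)$.

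The main obstacle is bijectivity, where the lex property of the chain is essential. For the inverse, given $\eta = (y_1,\ldots,y_s;\sigma_0,\ldots,\sigma_s) \in \QLS(\lambda)$ I would, for each $p \in \{1,\ldots,s-1\}$, lift the shortest path $y_{p+1} \Rightarrow y_p$ in $\QB_{\sigma_p\lambda}(W^J)$ to a shortest path $w_{p+1} \Rightarrow w_p$ in $\QB(W)$ whose labels lie in $\{-\lng\gamma_j : d_j = \sigma_p\}$ and are strictly increasing under the reflection order $\prec$; uniqueness of such a lex-minimal lift is \cite[Theorem~6.4]{LNSSS2}. Reading off the $j$'s block by block gives a candidate $A = \{j_1 < \cdots < j_r\}$. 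Admissibility, i.e., $A \in \CA(-\lamm)_{\lex}$, follows because the concatenated sequence of labels is strictly increasing in $\prec$ end-to-end: Remark~\ref{rem:prec} supplies the within-block comparisons, while the strict height jumps $\sigma_1 < \cdots < \sigma_{s-1}$ together with the lex property of the chain supply the across-block ones. Both composites $\Pi \circ \Pi^{-1}$ and $\Pi^{-1} \circ \Pi$ then equal the identity by uniqueness. The delicate point is precisely the across-block coherence, where the alignment of $\prec$ with the height sequence $d_1 \le \cdots \le d_\ell$ imposed by the lex condition is what makes the concatenation yield a genuine directed path in $\QB(W)$.
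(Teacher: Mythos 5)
The paper does not prove this proposition; it is quoted verbatim from \cite[Proposition~6.8 and Theorem~7.3]{LNSSS2}, so there is no in-paper proof to compare against. Your outline (verify well-definedness, prove the weight and degree identities by direct telescoping/quantum-edge bookkeeping, invert $\Pi$ by uniqueness of $\prec$-increasing lifts from \cite[Theorem~6.4]{LNSSS2}) does reflect the expected structure of the argument, and the well-definedness and weight/degree parts are reasonable sketches.

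There is, however, a genuine confusion in the bijectivity step. You write that admissibility ``follows because the concatenated sequence of labels is strictly increasing in $\prec$ end-to-end,'' crediting the lex property with the across-block comparisons. This is false as stated: the lex condition $d_1 \le \cdots \le d_\ell$ orders \emph{heights} and hence \emph{indices} across blocks, but it imposes no relation in the reflection order $\prec$ between a root $\gamma_j$ occurring at height $d_j = \sigma_p$ and a root $\gamma_{j'}$ at a larger height $d_{j'} = \sigma_{p'} > \sigma_p$; the two may compare either way in $\prec$, and the same root can even reappear in a later block with a strictly larger height. Moreover, ``labels increasing in $\prec$'' is not what admissibility means; admissibility of $A$ is the assertion that the path built from the indices of $A$ in increasing order $j_1 < \cdots < j_r$ is a directed path in $\QB(W)$. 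The correct argument is: within a block, Remark~\ref{rem:prec} identifies the $\prec$-increasing lift with the increasing index order; across blocks, $\sigma_p < \sigma_{p'}$ together with monotonicity of $d_k$ forces indices in the earlier block to be smaller; thus the concatenation of the block-wise lifted shortest paths already lists the edges in increasing index order, and since each block is by construction a directed path in $\QB(W)$ sharing endpoints with the next, the whole concatenation is a directed path, which is exactly admissibility. Your conclusion is right, but you should replace the ``labels increasing in $\prec$'' assertion by this index-monotonicity argument; as written, the key step rests on a claim that fails.
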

%
%
%
\begin{prop} \label{prop:Xi}
The map $\Xi:\QBM_{\lex} \rightarrow \QLS(\lambda)$ is bijective. 
Moreover, for every $p_{A} \in \QBM_{\lex}$, 
\begin{equation*}
\wt(\Xi(p_{A}))= \wt(p_{A}) \qquad \text{\rm and} \qquad 
\Deg (\Xi(p_{A})) = - \deg(\qwt(p_{A})). 
\end{equation*}
\end{prop}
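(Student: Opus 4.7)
My plan is to reduce Proposition~\ref{prop:Xi} to the analogous result for $\Pi$, namely Proposition~\ref{prop:Pi}, by matching the two constructions through the bijection of Lemma~\ref{lem:A-QB}.

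First I would observe that the recipe defining $\Xi(p_{A})$ in the paragraphs preceding the proposition depends only on the combinatorial data $\{j_{1}<\cdots<j_{r}\}$, the roots $\gamma_{j_{1}},\dots,\gamma_{j_{r}}$, and the rationals $d_{j_{1}},\dots,d_{j_{r}}$: indeed the Weyl group elements that appear are $x_{k}=\dir(z_{k})$, and I would verify by induction on $k$ that $\dir(z_{k})=r_{\gamma_{j_{1}}}\cdots r_{\gamma_{j_{k}}}$, starting from $\dir(z_{0})=\dir(m_{\lamm})=\dir(t_{\lamm})=e$ and using the decomposition $r_{a\ti{\delta}+\alpha^{\vee}}=t_{-a\alpha}r_{\alpha}$ recorded in Remark~\ref{rem:refl}. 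But this is exactly the data and recipe used to construct $\Pi(A)$ from $A\in\CA(-\lamm)_{\lex}$ in \cite[\S8.1]{LNSSS2}, which is what Proposition~\ref{prop:Pi} refers to. Combined with Lemma~\ref{lem:A-QB}, which identifies $\QBM_{\lex}$ with $\CA(-\lamm)_{\lex}$ via $p_{A}\leftrightarrow A$, this shows that $\Xi$ factors as
\[
\QBM_{\lex}\ \xrightarrow{\ \sim\ }\ \CA(-\lamm)_{\lex}\ \xrightarrow{\ \Pi\ }\ \QLS(\lambda),
\]
that is, $\Xi(p_{A})=\Pi(A)$ for every $p_{A}\in\QBM_{\lex}$.

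Given this factorization, the bijectivity of $\Xi$ is immediate from that of $\Pi$ (Proposition~\ref{prop:Pi}) and of the map $A\mapsto p_{A}$ (Lemma~\ref{lem:A-QB}). The weight and degree assertions then follow by chaining the formulas supplied by these two results:
\begin{align*}
\wt(\Xi(p_{A}))&=\wt(\Pi(A))=-\wt(A)=\wt(p_{A}),\\
\Deg(\Xi(p_{A}))&=\Deg(\Pi(A))=-\Ht(A)=-\deg(\qwt(p_{A})),
\end{align*}
where the middle equalities are \eqref{eq:Pi-wtht} and the outer ones are \eqref{eq:HtWt}.

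The only actual work lies in the inductive identification $\dir(z_{k})=r_{\gamma_{j_{1}}}\cdots r_{\gamma_{j_{k}}}$ and the check that the common partition $0=\sigma_{0}<\sigma_{1}<\cdots<\sigma_{s}=1$ extracted from the rationals $d_{j_{u}}$ is produced identically by the two constructions; both require the hypothesis that our reduced expression for $m_{\lamm}$ comes from a lex $(-\lamm)$-chain, which is precisely the hypothesis needed in Proposition~\ref{prop:Pi}. I do not expect a genuine obstacle here: the whole content of the proposition is that the two parallel constructions agree under the dictionary of Lemma~\ref{lem:A-QB}, and everything else is inherited from Proposition~\ref{prop:Pi}.
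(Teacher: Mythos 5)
Your proposal is correct and takes essentially the same route as the paper: both identify $\Xi$ with the composite of the Lemma~\ref{lem:A-QB} bijection and $\Pi$, then chain the weight/degree formulas \eqref{eq:HtWt} and \eqref{eq:Pi-wtht}. The paper states the identification of the two constructions more briefly (``From the constructions, we see\ldots''), whereas you spell out the inductive check $\dir(z_k)=r_{\gamma_{j_1}}\cdots r_{\gamma_{j_k}}$, but this is the same argument.
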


\begin{proof}
From the constructions, we see that 
the map $\Xi:\QBM_{\lex} \rightarrow \QLS(\lambda)$ above
is identical to the composite of the bijection 
$\QBM_{\lex} \stackrel{\sim}{\rightarrow} \CA(-\lamm)_{\lex}$ of Lemma~\ref{lem:A-QB} and 
the bijection $\Pi:\CA(-\lamm)_{\lex} \stackrel{\sim}{\rightarrow} \QLS(\lambda)$ 
in Proposition~\ref{prop:Pi}. Hence the map 
$\Xi:\QBM_{\lex} \rightarrow \QLS(\lambda)$ is also bijective.
\begin{equation*}
\begin{diagram}
\node{\quad \QBM_{\lex} \quad} 
\arrow{e,b}{\begin{subarray}{c}
  \text{Bijection} \\
  \text{in Lemma~\ref{lem:A-QB}}
  \end{subarray}
  }
\arrow{se,b}{\Xi}
\node{\CA(-\lamm)_{\lex}} 
\arrow{s,r}{\Pi} \\
\node{}
\node{\QLS(\lambda)}
\end{diagram}
\end{equation*}

We know from \eqref{eq:HtWt} that 
$\wt (A) = - \wt (p_{A})$ and $\Ht (A) = \deg (\qwt(p_{A}))$ 
for all $A \in \CA(-\lamm)$. 
Combining this equality and \eqref{eq:Pi-wtht}, 
we obtain the equalities 
$\wt(\Xi(p_{A}))= \wt(p_{A})$ and 
$\Deg(\Xi(p_{A}))= - \degr(\qwt(p_{A}))$
for all $p_{A} \in \QBM_{\lex}$, as desired.
\end{proof}
%
%
\begin{lem} \label{lem:Xi}
The image of $\QBM_{\lex,w}$ {\rm(}see Proposition~\ref{prop:embed}{\rm)}
under the bijection $\Xi:\QBM_{\lex} \rightarrow \QLS(\lambda)$ 
of Proposition~\ref{prop:Xi} is identical to $\QLS_{w}(\lambda)$. 
\end{lem}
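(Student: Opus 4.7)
The plan is to reduce the lemma to a purely Weyl-group combinatorial identity. Since $\Xi:\QBM_{\lex}\to\QLS(\lambda)$ is a bijection (Proposition~\ref{prop:Xi}), it suffices to prove the equivalence $p\in\QBM_{\lex,w} \Leftrightarrow \Xi(p)\in\QLS_{w}(\lambda)$ for each $p\in\QBM_{\lex}$. Writing $y := \ti{\iota}(p)$ and combining the definition \eqref{eq:QBMlexw} of $\QBM_{\lex,w}$, the definition of $\QLS_{w}(\lambda)$, and the formula $\iota(\Xi(p)) = \mcr{\ti{\iota}(p)\lng}$ of Remark~\ref{rem:iota}, this amounts to the statement
\[
y \ge w\mcr{\lng}^{-1} \quad\Longleftrightarrow\quad \mcr{y\lng} \le w,
\]
which I would prove under the hypothesis $y\in W^{\omega(J)}$ (automatic by Remark~\ref{rem:dirz}).

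The core step is to show that $y\mapsto \mcr{y\lng}$ is a bijection $W^{\omega(J)}\to W^{J}$ with explicit inverse $u\mapsto u\mcr{\lng}^{-1}$. For this, observe that if $y\in W^{\omega(J)}$, then for each $j\in J$ one has $y\lng\alpha_{j} = -y\alpha_{\omega(j)} \in -\Phi^{+}$, because $\alpha_{\omega(j)}\in\Phi^{+}_{\omega(J)}$ and $y$ sends $\Phi^{+}_{\omega(J)}$ into $\Phi^{+}$. Hence $\ell(y\lng r_{j}) < \ell(y\lng)$ for every $j\in J$, which forces $y\lng$ to be the (unique) longest element of its $W_{J}$-coset. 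Thus $y\lng = \mcr{y\lng}\lngJ$ with lengths adding, and since $\mcr{\lng}^{-1}=\lngJ\lng$, this yields $y = \mcr{y\lng}\lngJ\lng = \mcr{y\lng}\mcr{\lng}^{-1}$, producing the inverse map.

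Now set $u := \mcr{y\lng}\in W^{J}$, so $y = u\mcr{\lng}^{-1}$. The condition $y \ge w\mcr{\lng}^{-1}$ then becomes $u\mcr{\lng}^{-1} \ge w\mcr{\lng}^{-1}$; right-multiplying by $\lng$ (a Bruhat anti-automorphism of $W$ since $\ell(v\lng)=\ell(\lng)-\ell(v)$) and using $\mcr{\lng}^{-1}\lng = \lngJ$ transforms it into $u\lngJ \le w\lngJ$. For $u,w\in W^{J}$, the products $u\lngJ$ and $w\lngJ$ are the longest representatives of the cosets $uW_{J}$ and $wW_{J}$ respectively (lengths add by the characterization of $W^{J}$), and a standard application of the subword/lifting property gives $u\lngJ \le w\lngJ \Leftrightarrow u \le w$. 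Chaining these equivalences produces the desired statement.

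The argument is essentially bookkeeping; the only delicate point is to keep careful track of the direction reversal induced by right multiplication by $\lng$ and to invoke the lifting property for $W^{J}$ in the right form. I do not anticipate any deeper obstacle: once the bijection $W^{\omega(J)}\leftrightarrow W^{J}$, $y\leftrightarrow u$, is identified, the equivalence follows from two standard facts about Bruhat order.
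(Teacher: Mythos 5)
Your proof is correct and follows essentially the same route as the paper's: both reduce to showing that for $y=\ti{\iota}(p)\in W^{\omega(J)}$, the element $y\lng$ equals $\mcr{y\lng}\lngJ$ (you derive this by checking $\ell(y\lng r_j)<\ell(y\lng)$ for $j\in J$, while the paper checks $y\lng\lngJ\in W^J$ via root conditions — the same fact from two angles), and then both apply the Bruhat-order reversal under right multiplication by $\lng$ and the standard equivalence $u\lngJ\le w\lngJ\iff u\le w$ for $u,w\in W^J$, finishing via Remark~\ref{rem:iota}.
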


\begin{proof}
Let $p \in \QBM_{\lex}$. Then,
\begin{align*}
p \in \QBM_{\lex,w} 
  & \stackrel{\eqref{eq:QBMlexw}}{\iff} \ti{\iota}(p) \ge w \mcr{\lng}^{-1} 
    \iff \ti{\iota}(p) \lng \le w \mcr{\lng}^{-1} \lng.
\end{align*}
Since $\ti{\iota}(p) \in W^{\omega(J)}$ (see Remark~\ref{rem:dirz}), 
it follows by \eqref{eq:mcrs} that
\begin{equation*}
\ti{\iota}(p) \lng \lngJ (\Phi^{+}_{J}) =
\ti{\iota}(p) \lng (- \Phi^{+}_{J}) =
\ti{\iota}(p) (\Phi^{+}_{\omega(J)}) \subset \Phi^{+}.
\end{equation*}
From this, we deduce that 
$\ti{\iota}(p) \lng \lngJ \in W^{J}$ again by \eqref{eq:mcrs}, 
which implies that 
$\mcr{\ti{\iota}(p) \lng} \lngJ = 
\mcr{\ti{\iota}(p) \lng \lngJ} \lngJ = 
(\ti{\iota}(p) \lng \lngJ) \lngJ = 
\ti{\iota}(p) \lng$. 
Therefore, 
\begin{equation*}
\ti{\iota}(p) \lng \le w \mcr{\lng}^{-1} \lng 
\iff \mcr{\ti{\iota}(p) \lng} \lngJ \le w \lngJ.
\end{equation*}
Here we have
\begin{equation*}
\mcr{\ti{\iota}(p) \lng} \lngJ \le w \lngJ \quad \iff \quad 
\mcr{\ti{\iota}(p) \lng} \le w.
\end{equation*}
Indeed, the ``only if'' part ($\Rightarrow$) follows immediately from 
\cite[Proposition~2.5.1]{BB}. Let us show the ``if'' part ($\Leftarrow$). 
Fix reduced expressions for $\lngJ \in W_{J}$ and $w \in W^{J}$, respectively,
and then take a reduced expression of $\mcr{\ti{\iota}(p) \lng} \in W^{J}$ 
that is a ``subword'' of the fixed reduced expression of $w$ 
(see \cite[Theorem~2.2.2]{BB}). By \cite[Proposition~2.4.4]{BB}, 
the concatenation of this reduced expression for $\mcr{\ti{\iota}(p) \lng}$ 
(resp., $w \in W^{J}$) with a reduced expression for $\lngJ$ is 
a reduced expression for $\mcr{\ti{\iota}(p) \lng} \lngJ$ 
(resp., $w \lngJ$); observe that the obtained reduced expression for
$\mcr{\ti{\iota}(p) \lng} \lngJ$ is a subword of 
the obtained reduced expression for $w\lngJ$. 
Therefore, by \cite[Theorem~2.2.2]{BB}, we see that 
$\mcr{\ti{\iota}(p) \lng} \lngJ \le w \lngJ$, as desired. 
Finally, we have 
\begin{align*}
\mcr{\ti{\iota}(p) \lng} \le w
 & \iff \iota(\Xi(p)) \le w \qquad \text{by \eqref{eq:iota}} \\
 & \iff \Xi(p) \in \QLS_{w}(\lambda). 
\end{align*}
This proves the lemma. 
\end{proof}

\begin{proof}[Proof of Theorem~\ref{thm:Mac0}]
We compute: 
\begin{align*}
\sum_{\eta \in \QLS_{w}(\lambda)} 
 e^{\wt(\eta)}q^{-\Deg(\eta)}
& = \sum_{p \in \QBM_{\lex,w}} 
 e^{\wt(p)}q^{\degr(\qwt(p))} \\
& \hspace{30mm} 
  \text{by Lemma~\ref{lem:Xi} and Proposition~\ref{prop:Xi}} \\[3mm]
& =\Mac{w\lambda} \quad \text{by \eqref{eq:prf1}}.
\end{align*}
This completes the proof of Theorem~\ref{thm:Mac0}. 
\end{proof}
%
%
\subsection{The formula in terms of the quantum alcove model.}
\label{subsec:qam}

We start with some review from \cite{LNSSS2}. 
Recall the Dynkin diagram automorphism $\omega:I \rightarrow I$ 
induced by $\lng \alpha_{j}=-\alpha_{\omega(j)}$ for $j \in I$. 
Note that $\omega$ acts as $-\lng$ 
on the integral weight lattice $X$. There exists a group automorphism, 
denoted also by $\omega$, of the Weyl group $W$ such that 
$\omega(r_{j})=r_{\omega(j)}$ for all $j \in I$.

Now, fix $\lambda \in X$ be a dominant integral weight 
with $J=\bigl\{ i \in I \mid \pair{\alpha_{i}^{\vee}}{\lambda} = 0 \bigr\}$, 
and let
%
%
\begin{equation} \label{eq:se1}
\eta=(x_{1},\,\dots,\,x_{s}\,;\,
\sigma_{0},\,\sigma_{1},\,\dots,\,\sigma_{s}) 
\in \QLS(\lambda),
\end{equation}
with $x_{1},\,\dots,\,x_{s} \in W^{J}$ and 
rational numbers $0=\sigma_{0} < \cdots < \sigma_{s}=1$. 
Then we define
%
%
\begin{equation} \label{eq:dual}
\eta^{\ast}:=
(\mcr{x_{s}w_\circ}^{\omega(J)},\,\dots,\,\mcr{x_{1}w_\circ}^{\omega(J)}\,;\,
 1-\sigma_{s},\,1-\sigma_{s-1},\,\dots,\,1-\sigma_{0})\,.
\end{equation}
We also define $\omega(\eta)$ by
%
%
\begin{equation} \label{eq:se2}
\omega(\eta):=
(\omega(x_{1}),\,\dots,\,\omega(x_{s})\,;\,
\sigma_{0},\,\sigma_{1},\,\dots,\,\sigma_{s}). 
\end{equation}
Both maps, $*$ and $\omega$, are 
bijections between $\QLS(\lambda)$ and $\QLS(-w_{\circ}\lambda)$, 
and they change the weight of a path by a negative sign and $\omega$, respectively. 
Finally, we set $S(\eta):=\omega(\eta^{\ast})=(\omega(\eta))^{\ast}$, 
which turns out to be the Lusztig involution on $\QLS(\lambda)$. 

Replacing $\lambda$ by $-\lamm$ 
in \S\ref{subsec:QB-A} and \S\ref{subsec:lex}, 
let us consider a lex $\lambda$-chain of roots, and 
the quantum alcove model $\CA(\lambda)_{\lex}$ associated to it.
Recall the map $\Pi$ (in Proposition~\ref{prop:Pi} 
with $\lambda$ replaced by $-\lamm$) and 
the corresponding commutative diagram:
\begin{equation}
\begin{diagram}
\node{\CA(\lambda)_{\lex} } \arrow{e,t}{\Pi} \arrow{se,b}{\Pi^{\ast}} 
\node{\QLS(-w_{\circ}\lambda)} \arrow{s,r}{\ast} \\
\node{} \node{\QLS(\lambda).}
\end{diagram}
\end{equation}

We need an analogue of \cite[Theorem~7.3]{LNSSS2} for the coheight statistic, 
which was defined in~\eqref{defcoheight}. This is stated as follows, 
and is proved in a completely similar way, based on the results in \cite{LNSSS2}.
%
%
\begin{thm} \label{energy-transl} 
Consider an admissible subset $A \in \CA(\lambda)_{\lex}$, and 
the corresponding QLS path $\Pi(A) \in \QLS(-\lamm)$. 
Write $\Pi(A)$ as follows {\rm(}cf. Definition~\ref{dfn:QLS}\rm{)}:
\begin{equation} \label{qlsp}
x_1 
\blarrl{-\sigma_1w_{\circ}\lambda} x_2
\blarrl{-\sigma_2w_{\circ}\lambda} \ldots 
\blarrl{-\sigma_{s-1}w_{\circ}\lambda} x_s,
\end{equation}
with $x_i \in W^{\omega(J)}$ and 
$0=\sigma_{0} < \sigma_{1} < \cdots < \sigma_{s} = 1$. Then, we have
\begin{equation}
 \cHt(A)=\sum_{i=1}^{s-1} \sigma_i\wt_{-w_\circ\lambda}(x_{i+1}\Rightarrow x_{i}), 
\end{equation}
where $\wt_{-w_\circ\lambda}(x_{i+1}\Rightarrow x_{i})$ 
was defined in \eqref{defwtlam}. 
\end{thm}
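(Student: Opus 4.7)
The plan is to imitate the proof of Proposition~\ref{prop:Pi} (equivalently, \cite[Theorem~7.3]{LNSSS2}), with the coheight $\cHt$ in place of the height $\Ht$; the structural framework is unchanged, and only the closing algebraic identity differs.

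First, I would unpack $\Pi(A)$ via the block decomposition used to construct it (cf.~\eqref{eq:ap} in the substituted setup): partition the indices of $A=\{j_1<\cdots<j_r\}$ into maximal runs $\{j_{u_p+1},\dots,j_{u_{p+1}}\}$ on which the fractional heights $d_{j_u}$ take a common value $\sigma_p$, where $0=\sigma_0<\sigma_1<\cdots<\sigma_{s-1}<\sigma_s=1$. The essential input from \cite[\S6--7]{LNSSS2}, already exploited in the construction of $\Xi$ in \S\ref{subsec:QB-QLS}, is that within each block the elements of $A_-$ correspond bijectively to the quantum edges of a shortest directed path from $x_{p+1}$ to $x_p$ in $\QB(W^{\omega(J)})$, the $\QLS$-side label being $-\lng\gamma_{j_u}$ for the corresponding $j_u\in A_-$. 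Next, rearranging \eqref{eq:dk} in the substituted setup gives
\begin{equation*}
b_{j_u}\;=\;d_{j_u}\,\bpair{(\gamma_{j_u}^{\Le})^{\vee}}{\lambda}\;=\;\sigma_p\,\bpair{\gamma_{j_u}^{\vee}}{\lambda},
\end{equation*}
and combining with $\pair{(-\lng\gamma)^{\vee}}{-w_\circ\lambda}=\pair{\gamma^{\vee}}{\lambda}$ (which uses $\lng^2=e$ and that $-\lng$ preserves the root-coroot correspondence), the block-level contribution to $\cHt(A)$ becomes
\begin{equation*}
\sum_{\substack{u_p<u\le u_{p+1}\\ j_u\in A_-}}b_{j_u}
\;=\;\sigma_p\sum_{\substack{u_p<u\le u_{p+1}\\ j_u\in A_-}}\bpair{(-\lng\gamma_{j_u})^{\vee}}{-w_\circ\lambda}
\;=\;\sigma_p\,\wt_{-w_\circ\lambda}(x_{p+1}\Rightarrow x_p),
\end{equation*}
the last equality coming from the bijection above and the definition of $\wt_{-w_\circ\lambda}$. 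Summing over $p$ and discarding the $p=0$ term (since $\sigma_0=0$) yields the desired formula.

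The main obstacle is verifying that the block-level identification of $A_-$ with the quantum edges of the shortest path --- the engine of \cite[Theorem~7.3]{LNSSS2} --- transfers verbatim after the substitution used in this subsection. This is essentially formal: the shortest-path, lex-chain, and reflection-order machinery is symmetric under the replacement $\lambda\leftrightarrow -w_\circ\lambda$, so the adaptation amounts to swapping $a_j\mapsto b_j$ and $(1-\sigma_p)\mapsto\sigma_p$ in the final assembly of the degree formula, with no new geometric or combinatorial input required.
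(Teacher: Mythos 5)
Your proof is correct and carries out exactly what the paper intends when it says the theorem ``is proved in a completely similar way, based on the results in \cite{LNSSS2}'': the paper's proof of $\Deg(\Pi(A))=-\Ht(A)$ (i.e.\ \cite[Theorem~7.3]{LNSSS2}) works block-by-block through the decomposition \eqref{eq:ap}, identifying $A_-$ within the $p$-th block with the quantum edges of the shortest path from $x_{p+1}$ to $x_p$ and using $a_{j_u}=(1-\sigma_p)\pair{\gamma_{j_u}^{\vee}}{\lambda}$; you correctly observe that replacing this last identity by $b_{j_u}=\sigma_p\pair{\gamma_{j_u}^{\vee}}{\lambda}$ (both coming from \eqref{eq:dk}) converts the $\Ht$-formula into the asserted $\cHt$-formula with no further geometric input.
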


We will now express the nonsymmetric Macdonald polynomial 
in terms of the quantum alcove model. 
Recall that the final direction $\phi(A)$ of 
an admissible subset $A$ was defined in \eqref{deffinal}.
%
%
\begin{thm}\label{thmalc}
We have
\begin{equation} \label{e3}	
 E_{w\lambda}(x;q,0)=
  \sum_{ 
    \begin{subarray}{c}
     A \in \CA(\lambda) \\ 
     \mcr{\phi(A)}^{J} \le w
    \end{subarray}
  } q^{\cHt(A)} x^{\wt(A)}.
\end{equation}
\end{thm}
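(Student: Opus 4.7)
The plan is to combine Theorem~\ref{thm:Mac0} with a dual version of the bijection $\Pi$ from Proposition~\ref{prop:Pi}. Running the setup of \S\ref{subsec:QB-A}--\S\ref{subsec:QB-QLS} with $\lambda$ replaced throughout by $-\lamm$ produces a lex $\lambda$-chain of roots, the quantum alcove model $\CA(\lambda)_{\lex}$ associated to it, and a bijection $\Pi : \CA(\lambda)_{\lex} \rightarrow \QLS(-\lamm)$. Proposition~\ref{prop:Pi} gives $\wt(\Pi(A)) = -\wt(A)$ and $\Deg(\Pi(A)) = -\Ht(A)$, while Theorem~\ref{energy-transl} supplies the coheight-side identity $\cHt(A) = \sum_{i=1}^{s-1}\sigma_{i}\,\wt_{-\lamm}(x_{i+1} \Rightarrow x_{i})$ whenever $\Pi(A) = (x_{1},\dots,x_{s};\sigma_{0},\dots,\sigma_{s})$. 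Composing $\Pi$ with the Lusztig-type involution $\ast$ of \eqref{eq:dual} yields a bijection $\Pi^{\ast} : \CA(\lambda)_{\lex} \rightarrow \QLS(\lambda)$, and the goal is to show that $\Pi^{\ast}$ matches the right-hand side of \eqref{e3}, restricted to $\CA(\lambda)_{\lex}$, with $\gch \QLS_{w}(\lambda)$; then Theorem~\ref{thm:Mac0} closes the argument.

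Three identifications are required. First, since $\ast$ negates weight, $\wt(\Pi^{\ast}(A)) = -\wt(\Pi(A)) = \wt(A)$. Second, writing $\Pi^{\ast}(A) = (\mcr{x_{s}\lng}^{J},\dots,\mcr{x_{1}\lng}^{J}; 1-\sigma_{s},\dots,1-\sigma_{0})$ and reindexing the defining sum of $\Deg$ via $u \mapsto s-u$ yields
\begin{equation*}
\Deg(\Pi^{\ast}(A)) = -\sum_{i=1}^{s-1}\sigma_{i}\,\wt_{\lambda}\bigl(\mcr{x_{i}\lng}^{J} \Rightarrow \mcr{x_{i+1}\lng}^{J}\bigr).
\end{equation*}
Matching this with the formula from Theorem~\ref{energy-transl}, the equality $\Deg(\Pi^{\ast}(A)) = -\cHt(A)$ reduces to the parabolic-dual identity $\wt_{\lambda}(\mcr{x\lng}^{J} \Rightarrow \mcr{y\lng}^{J}) = \wt_{-\lamm}(y \Rightarrow x)$ for shortest paths in the parabolic quantum Bruhat graph, which expresses the standard anti-isomorphism $\QB(W^{\omega(J)}) \to \QB(W^{J})$, $x \mapsto \mcr{x\lng}^{J}$, developed in the LNSSS series.

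Third, by the explicit form of $\Pi$ (parallel to that of $\Xi$ in \S\ref{subsec:QB-QLS}), the last entry of $\Pi(A) \in \QLS(-\lamm)$ is $\mcr{\phi(A)\lng}^{\omega(J)}$, so
\begin{equation*}
\iota(\Pi^{\ast}(A)) = \mcr{\mcr{\phi(A)\lng}^{\omega(J)}\lng}^{J}.
\end{equation*}
Using $\lng W_{\omega(J)} \lng = W_{J}$ (a consequence of $\lng\alpha_{i} = -\alpha_{\omega(i)}$) together with $\lng^{2} = e$, one checks that $\mcr{\phi(A)\lng}^{\omega(J)}\lng$ already belongs to the coset $\phi(A) W_{J}$, so this iterated minimal representative collapses to $\mcr{\phi(A)}^{J}$. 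Consequently $\Pi^{\ast}(A) \in \QLS_{w}(\lambda)$ if and only if $\mcr{\phi(A)}^{J} \le w$.

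Combining the three, the restriction of the right-hand side of \eqref{e3} to $\CA(\lambda)_{\lex}$ equals $\sum_{\eta \in \QLS_{w}(\lambda)} q^{-\Deg(\eta)} e^{\wt(\eta)} = \gch \QLS_{w}(\lambda) = \Mac{w\lambda}$ by Theorem~\ref{thm:Mac0}. It remains to pass from $\CA(\lambda)_{\lex}$ to an arbitrary $\lambda$-chain, which follows from the Yang--Baxter-type moves on $\lambda$-chains (from \cite{LP1,LL}) that preserve $\wt$, $\cHt$, and $\phi$. The main obstacle I anticipate is the parabolic-dual identity for $\wt_{\lambda}$ in the second step: aligning the reindexing produced by $\ast$ with the shortest-path interpretation in $\QB(W^{J})$ versus $\QB(W^{\omega(J)})$, and ensuring that the $\lng$-twist is handled uniformly across all root systems.
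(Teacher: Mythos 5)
Your proposal follows essentially the same route as the paper: both use the weight-preserving bijection $\Pi^\ast:\CA(\lambda)_{\lex}\to\QLS(\lambda)$, both match $\wt(A)=\wt(\Pi^\ast(A))$, $\cHt(A)=-\Deg(\Pi^\ast(A))$, and $\iota(\Pi^\ast(A))=\mcr{\phi(A)}^J$, and both then invoke Theorem~\ref{thm:Mac0}. The one point of genuine divergence is the coheight--degree matching. You try to prove $\cHt(A)=-\Deg(\Pi^\ast(A))$ by reindexing the defining sum of $\Deg$ and reducing to a parabolic-dual identity $\wt_{\lambda}(\mcr{x\lng}^J\Rightarrow\mcr{y\lng}^J)=\wt_{-\lamm}(y\Rightarrow x)$ for shortest paths in the two parabolic quantum Bruhat graphs, citing it as ``developed in the LNSSS series'' without pinning it down. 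The paper instead routes through the Lusztig involution $S$, chaining $\cHt(A)=-\Deg(S(\Pi(A)))$ (from [LNSSS2, Cor.~4.7] plus Theorem~\ref{energy-transl}), $S(\Pi(A))=\omega(\Pi^\ast(A))$ (by definition of $S$), and $\Deg(\omega(\Pi^\ast(A)))=\Deg(\Pi^\ast(A))$ (from [LNSSS2, Cor.~7.4]); this sidesteps having to prove your parabolic-dual identity directly. The identity you want is true and in effect is what those corollaries encode, so your proof works, but as written it leaves its crucial step as a pointer rather than a citation; if you want a clean reference, it is easiest to substitute the paper's chain of equalities through $S$ and $\omega$. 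Your direct group-theoretic argument for $\iota(\Pi^\ast(A))=\mcr{\phi(A)}^J$ (via $\lng W_{\omega(J)}\lng=W_J$ and collapsing the iterated minimal representative) is correct and a little more self-contained than the paper's appeal to the explicit description of $\Pi^\ast$ in [LNSSS2, \S6.1]. Finally, your last paragraph about Yang--Baxter moves is not needed for Theorem~\ref{thmalc}, which is stated for the lex chain; that passage is the content of Theorem~\ref{thmalcgen}.
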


\begin{proof}
We derive this formula directly from Theorem~\ref{thm:Mac0}, based on the map $\Pi^\ast$, 
which is known to be a weight-preserving bijection, by \cite[Proposition~6.7]{LNSSS2}. 
Using the very explicit description of the map $\Pi^\ast$ in \cite[\S6.1]{LNSSS2}, 
we can see that it switches initial and final directions, i.e., 
for $A \in \CA(\lambda)$ we have
\begin{equation*}
\iota(\Pi^\ast(A))= \mcr{\phi(A)}^{J}.
\end{equation*}
Finally, by using the notation \eqref{qlsp} for $\Pi(A)$, 
we deduce:
\begin{align*}
\cHt(A)
 & = \sum_{u=1}^{s-1} \sigma_{u} \wt_{-\lamm}(x_{u+1}\Rightarrow x_{u}) 
   = - \Deg(S(\Pi(A))) \\[1.5mm]
 & = - \Deg(\omega(\Pi^\ast(A))) 
   = - \Deg(\Pi^\ast(A)).
\end{align*}
Here the first equality is based on Theorem~\ref{energy-transl}, 
the second one on \cite[Corollary~4.7]{LNSSS2}, 
the third one on the above definition of the Lusztig involution $S$, and 
the last one on \cite[Corollary~7.4]{LNSSS2}. 
\end{proof}

\begin{rem}
In \cite{LNSSS2}, we realized an appropriate tensor product of 
Kirillov--Reshetikhin crystals $\BB$ in terms of $\QLS(\lambda)$. 
Based on this, we expressed the so-called ``right'' energy function on $\BB$ as 
$\Deg(\eta)$ for $\eta\in\QLS(\lambda)$. In these terms, 
$\Deg(S(\eta))$ expresses the corresponding ``left'' energy function, 
see \cite[Remark~4.9]{LNSSS2}. We also realized  $\BB$ in terms of 
the quantum alcove model, and in this setup the two energy functions are 
expressed by the height and coheight statistics.
\end{rem}

When $\Gamma$ is an (arbitrary) $\lambda$-chain of roots, 
we denote by $\CA(\lambda)_{\Gamma}$ the quantum alcove model 
associated to $\Gamma$.
In \cite{LL2}, we defined certain combinatorial moves 
(called quantum Yang-Baxter moves) in the quantum alcove model, 
namely on the collection of $\CA(\lambda)_{\Gamma}$, 
where $\Gamma$ is any $\lambda$-chain (of roots).
We showed that these define an affine crystal isomorphism 
between $\CA(\lambda)_{\Gamma}$ and $\CA(\lambda)_{\Gamma'}$ 
for any two $\lambda$-chains $\Gamma$ and $\Gamma'$. 
We also showed that the moves preserve the weight, 
the height and coheight, as well as the final direction of 
(the path in $\QB(W)$ associated with) an admissible subset. 
Based on these facts, we can generalize Theorem~\ref{thmalc}. 

\begin{thm} \label{thmalcgen} 
Theorem~\ref{thmalc} still holds if we replace the admissible subsets 
$\CA(\lambda)_{\lex}$ for a lex $\lambda$-chain with 
the ones for an arbitrary $\lambda$-chain $\Gamma$, 
namely $\CA(\lambda)_{\Gamma}$.
\end{thm}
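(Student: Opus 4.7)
The plan is to deduce Theorem~\ref{thmalcgen} directly from Theorem~\ref{thmalc} by transporting the formula along the quantum Yang--Baxter moves of \cite{LL2}. These moves provide, for any two $\lambda$-chains $\Gamma$ and $\Gamma'$, a bijection $\CA(\lambda)_{\Gamma} \stackrel{\sim}{\longrightarrow} \CA(\lambda)_{\Gamma'}$ (in fact an affine crystal isomorphism) that, as recalled in the paragraph preceding the statement of the theorem, preserves the weight, the height, the coheight, and the final direction of an admissible subset.

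The first step is to identify the statistics on the right-hand side of \eqref{e3} and match them with what the Yang--Baxter moves preserve. The summand $q^{\cHt(A)}\,x^{\wt(A)}$ depends only on $\wt(A)$ and $\cHt(A)$, and the range of summation is cut out by the condition $\mcr{\phi(A)}^{J} \le w$, which depends only on $\phi(A)$. All three of these statistics are invariant under quantum Yang--Baxter moves, so the entire sum
\begin{equation*}
\sum_{\substack{A \in \CA(\lambda)_{\Gamma} \\ \mcr{\phi(A)}^{J} \le w}} q^{\cHt(A)}\,x^{\wt(A)}
\end{equation*}
is independent of the choice of $\lambda$-chain $\Gamma$.

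The second step is simply to specialize: choose a lex $\lambda$-chain $\Gamma_{\lex}$ and invoke Theorem~\ref{thmalc}, which identifies the above sum for $\Gamma_{\lex}$ with $E_{w\lambda}(x;q,0)$. Combined with the invariance from the first step, this yields the same identity for $\Gamma$.

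The only potential obstacle is the bookkeeping verification that the condition $\mcr{\phi(A)}^J \le w$ (rather than just $\phi(A)$ itself) really is an invariant of the moves; but since $\phi(A)$ is preserved as an element of $W$, taking its minimal coset representative is automatic, so no real difficulty arises. Thus the proof reduces to a clean transport of structure argument, with all the substantive work already carried out in Theorem~\ref{thmalc} and in the invariance statements of \cite{LL2}.
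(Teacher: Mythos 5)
Your proposal is correct and matches the paper's intended argument: the paper itself derives Theorem~\ref{thmalcgen} precisely by transporting Theorem~\ref{thmalc} along the quantum Yang--Baxter moves of \cite{LL2}, which give a bijection $\CA(\lambda)_{\Gamma}\to\CA(\lambda)_{\Gamma'}$ preserving weight, height, coheight, and final direction. Your observation that the summand and the indexing condition $\mcr{\phi(A)}^J\le w$ depend only on these preserved statistics is exactly the bookkeeping the paper relies on.
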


\begin{rem}
The formulas in Theorems \ref{thm:Mac0} and \ref{thmalc} 
(in fact, the latter can be replaced with the mentioned generalization) specialize, 
upon setting $q=0$, to the formulas for Demazure characters in terms of LS paths 
\cite[Theorem~5.2]{L-Inv} and the alcove model \cite[Theorem~6.3]{Le}.
\end{rem}
%
%
\section{Graded characters of quotients of Demazure modules.}
\label{sec:gc}
%
%
\subsection{Additional setting.}
\label{subsec:notation}

The untwisted affine Lie algebra $\Fg_{\af}$ is written as: 
$\Fg_{\af}=\Fg \otimes \BC[t,\,t^{-1}] \oplus \BC c \oplus \BC D$, 
where $c=\sum_{j \in I_{\af}} a^{\vee}_{j} \alpha_{j}^{\vee}$ 
is the canonical central element, and 
$D$ is the scaling element (or the degree operator); note that 
the Cartan subalgebra $\Fh_{\af}$ of $\Fg_{\af}$ is 
$\Fh \oplus \BC c \oplus \BC D$, 
where $\Fh$ is the Cartan subalgebra of $\Fg$. 
%

Let us denote by $\bigl\{\alpha_{i}\bigr\}_{i \in I_{\af}}$ and 
$\bigl\{\alpha_{i}^{\vee}\bigr\}_{i \in I_{\af}}$ 
the simple roots and simple coroots of $\Fg_{\af}$, respectively, and 
by $\Lambda_{j} \in \Fh_{\af}^{\ast}$, $j \in I_{\af}$, 
the fundamental weights for $\Fg_{\af}$; 
note that $\pair{D}{\alpha_{j}}=\delta_{j,0}$ and 
$\pair{D}{\Lambda_{j}}=0$ for $j \in I_{\af}$. 
We take a weight lattice $X_{\af}$ for $\Fg_{\af}$ as follows:
%
%
\begin{equation} \label{eq:lattices}
X_{\af} = 
\left(\bigoplus_{j \in I_{\af}} \BZ \Lambda_{j}\right) \oplus 
   \BZ \delta \subset \Fh_{\af}^{\ast}, 
\end{equation}
where $\delta \in \Fh_{\af}^{\ast}$ denotes the null root of $\Fg_{\af}$.
We think of a weight $\mu \in \Fh^{\ast}$ for $\Fg$ 
as a weight ($\in \Fh_{\af}^{\ast}$) for $\Fg_{\af}$ by: 
$\pair{c}{\mu}=\pair{D}{\mu}=0$. Then, for each $i \in I$, 
the fundamental weight $\vpi_{i}$ for $\Fg$ is identical to 
$\Lambda_{i}-a_{i}^{\vee}\Lambda_{0} \in \Fh_{\af}^{\ast}$; 
we call the weights $\vpi_{i}=\Lambda_{i}-a_{i}^{\vee}\Lambda_{0} 
\in \Fh_{\af}^{\ast}$, $i \in I$, the level-zero fundamental weights.

The (affine) Weyl group $W_{\af}$ of $\Fg_{\af}$ is the subgroup
$\langle r_{j} \mid j \in I_{\af} \rangle \subset \GL(\Fh_{\af}^{\ast})$ 
generated by the simple reflections $r_{j}$
associated to $\alpha_{j}$ for $j \in I_{\af}$, 
with length function $\ell:W_{\af} \rightarrow \BZ_{\ge 0}$ and 
unit element $e \in W_{\af}$; recall that $W_{\af} \cong W \ltimes Q^{\vee}$. 
We denote by $\rr$ the set of real roots, 
and by $\prr \subset \rr$ the set of positive real roots. 
%
%
%
\begin{dfn}[{\cite{Pet97}}] \label{dfn:sell}
Let $x \in W_{\af} \cong W \ltimes Q^{\vee}$, and 
write it as $x = w t_{\xi}$ for $w \in W$ and $\xi \in Q^{\vee}$. 
Then we define the semi-infinite length $\sell(x)$ of $x$ by
$\sell (x) := \ell (w) + 2 \pair{\xi}{\rho}$. 
\end{dfn}

Now, let $J$ be a subset of $I$. Following \cite{Pet97} 
(see also \cite[\S10]{LS10}), we define
\begin{align}
(\Phi_J)_{\af}^{+}
  &:= \biggl(\bigoplus_{i \in J} \BZ \alpha_{i} + \BZ\delta \biggr) \cap \prr, \\
\label{eq:stabilizer}
(W^J)_{\af}
 &:= \bigl\{ x \in W_{\af} \mid 
 \text{$x\beta \in \prr$ for all $\beta \in (\Phi_J)_{\af}^+$} \bigr\}. 
\end{align}
%
%
\begin{dfn}\label{def:SiB}
(1) The (parabolic) semi-infinite Bruhat graph $\SB$ is 
the $\prr$-labeled, directed graph with vertex set $(W^J)_{\af}$ 
and $\prr$-labeled, directed edges of the following form:
$x \edge{\beta} r_{\beta} x$ for $x \in (W^J)_{\af}$ and $\beta \in \prr$, 
where $r_{\beta } x \in (W^J)_{\af}$ and 
$\sell (r_{\beta} x) = \sell (x) + 1$.

(2)
The semi-infinite Bruhat order is a partial order 
$\sile$ on $(W^J)_{\af}$ defined as follows: 
for $x,\,y \in (W^J)_{\af}$, we write $x \sile y$ 
if there exists a directed path from $x$ to $y$ in $\SB$; 
also, we write $x \sil y$ if $x \sile y$ and $x \ne y$. 
\end{dfn}

Finally, let $U_{q}(\Fg_{\af})$ denote the quantum affine algebra associated to $\Fg_{\af}$ 
with integral weight lattice $X_{\af}$, and $E_{j},\,F_{j},\,j \in I_{\af}$, 
the Chevalley generators of $U_{q}(\Fg_{\af})$. 
Also, let $U_{q}^{+}(\Fg_{\af})$ denote
the subalgebra of $U_{q}(\Fg_{\af})$ generated by $E_{j}$, $j \in I_{\af}$.

%
%
\subsection{Extremal weight modules and Demazure modules.}
\label{subsec:ext}

For an arbitrary integral weight $\lambda \in X_{\af}$ of $\Fg_{\af}$, 
let $V(\lambda)$ denote the extremal weight module of 
extremal weight $\lambda$ over $U_{q}(\Fg_{\af})$, which is 
an integrable $U_{q}(\Fg_{\af})$-module generated 
by a single element $v_{\lambda}$ with 
the defining relation that $v_{\lambda}$ is 
an ``extremal weight vector'' of weight $\lambda$
(for details, see \cite[\S8]{K-mod} and \cite[\S3]{K-lv0}).
We know from \cite[Proposition~8.2.2]{K-mod} that $V(\lambda)$ has 
a crystal basis $(\CL(\lambda),\,\CB(\lambda))$ with corresponding global basis 
$\bigl\{G(b) \mid b \in \CB(\lambda)\bigr\}$; 
we denote by $u_{\lambda}$ the element of $\CB(\lambda)$ 
such that $G(u_{\lambda})=v_{\lambda} \in V(\lambda)$. 

Now, let $\lambda$ be a dominant integral weight for $\Fg$, and set 
$J=J_{\lambda}=\bigl\{i \in I \mid \pair{\alpha_{i}^{\vee}}{\lambda}=0\bigr\}$; 
note that $\lambda$ is regarded as an element of $X_{\af}$ by 
$\pair{c}{\lambda}=\pair{D}{\lambda}=0$. 
For each $x \in W_{\af}$, we set
%
%
\begin{equation} \label{eq:dem}
V_{x}^{+}(\lambda):=U_{q}^{+}(\Fg_{\af})S_{x}^{\norm}v_{\lambda} 
\subset V(\lambda), 
\end{equation}
where $S^{\norm}_{x}$ denotes the action of the (affine) Weyl group $W_{\af}$ on 
the set of extremal weight vectors (see \cite[(3.2.1)]{NS}). 
We know from \cite[\S2.8]{K-rims} (see also \cite[\S4.1]{NS}) that 
there exists a subset $\CB_{x}^{+}(\lambda)$ of 
the crystal basis $\CB(\lambda)$ such that 
$\bigl\{G(b) \mid b \in \CB_{x}^{+}(\lambda)\bigr\}$ is 
the global basis of $V_{x}^{+}(\lambda)$.

%
\subsection{Quotients of Demazure modules and their graded characters.}
\label{subsec:gch}

We fix a dominant integral weight $\lambda$ for $\Fg$. 
As in \cite[\S7.2]{NS}, we set 
\begin{equation*}
Z_{\lng}^{+}(\lambda) :=
\sum_{
 \begin{subarray}{c}
  \bc \in \ol{\Par(\lambda)} \\[1.5mm]
  \bc \ne (\emptyset)_{i \in I}
 \end{subarray}
 } U_{q}^{+}(\Fg_{\af}) S_{\bc}S_{\lng}^{\norm}v_{\lambda};
\end{equation*}
notice that our notation differs slightly from that in~\cite{NS}.
Here, $\ol{\Par(\lambda)}$ denotes a certain set of multi-partitions 
indexed by $I$ (see \cite[(2.5.1)]{NS}), and 
$S_{\bc} \in U_{q}^{+}(\Fg_{\af})$ denotes the PBW-type basis element 
of weight $|\bc|\delta$ corresponding to 
the ``purely imaginary part'' (see \cite[page 352]{BN}), 
where $|\bc|$ is the sum of all parts 
in the multi-partition $\bc$. 
Notice that $Z_{\lng}^{+}(\lambda) \subset V_{\lng}^{+}(\lambda)=
U_{q}^{+}(\Fg_{\af})S_{\lng}^{\norm}v_{\lambda}$ since 
$S_{\bc} \in U_{q}^{+}(\Fg_{\af})$ for all $\bc \in \ol{\Par(\lambda)}$. 

Now, let $w \in W$; in what follows, we may assume that 
$w \in W^{J} \subset (W^{J})_{\af}$ since 
$V_{w}^{+}(\lambda) = V_{\mcr{w}}^{+}(\lambda)$ 
for $w \in W$ by \cite[Lemma~4.1.2]{NS}. 
Then, noting that 
$V_{w}^{+}(\lambda)=V_{\mcr{w}}^{+}(\lambda) 
 \subset V_{\mcr{\lng}}^{+}(\lambda) = V_{\lng}^{+}(\lambda)$ 
by \cite[Corollary~5.2.5]{NS} since $\mcr{w} \sile \mcr{\lng}$, 
we define $U_{w}^{+}(\lambda)$ to be the image of 
$V_{w}^{+}(\lambda)$ under the canonical projection 
$V_{\lng}^{+}(\lambda) \twoheadrightarrow 
 V_{\lng}^{+}(\lambda)/Z_{\lng}^{+}(\lambda)$.
We write the weight space decomposition of $U_{w}^{+}(\lambda)$
with respect to $\Fh_{\af}$ as:
\begin{equation*}
U_{w}^{+}(\lambda) = \bigoplus_{\gamma \in Q,\,k \in \BZ} 
U_{w}^{+}(\lambda)_{\lambda-\gamma+k\delta}, 
\end{equation*}
and define the graded character
$\gch U_{w}^{+}(\lambda)$ of $U_{w}^{+}(\lambda)$ to be 
\begin{equation*}
\gch U_{w}^{+}(\lambda) := \sum_{\gamma \in Q,\,k \in \BZ} 
\dim U_{w}^{+}(\lambda)_{\lambda-\gamma+k\delta}\,x^{\lambda-\gamma}q^{k}, 
\qquad \text{where $q=x^{\delta}$}.
\end{equation*}
The following is the main result of this section.
%
%
\begin{thm} \label{thm:gch}
Keep the notation and setting above. We have
\begin{equation*}
\gch U_{w}^{+}(\lambda) = \Mac{w\lambda}.
\end{equation*}
\end{thm}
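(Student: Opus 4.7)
The plan is to reduce Theorem~\ref{thm:gch} to Theorem~\ref{thm:Mac0} by establishing the intermediate identity $\gch U_w^+(\lambda) = \gch \QLS_w(\lambda)$. This naturally splits into two pieces: (i) identify the crystal basis of the ambient quotient $V_{\lng}^+(\lambda)/Z_{\lng}^+(\lambda)$ with $\QLS(\lambda)$ in a manner compatible with both the $X$-weight and the $\delta$-grading; and (ii) show that under this identification the image of $V_w^+(\lambda)$ corresponds exactly to the subcrystal $\QLS_w(\lambda)$.

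For piece (i), I would invoke the Naito--Sagaki framework recalled in \S\ref{subsec:ext}, together with the identification of the projected LS-path model $\BB(\lambda)_{\cl}$ with $\QLS(\lambda)$ from \cite[Theorem~3.3]{LNSSS2} and the energy/degree analysis of \cite[\S7]{LNSSS2}. These yield an isomorphism of $U_q(\Fg)$-crystals between the crystal basis of $V_{\lng}^+(\lambda)/Z_{\lng}^+(\lambda)$ and $\QLS(\lambda)$, under which the $X$-component of the $\Fh_{\af}$-weight of a global basis element $G(b)$ corresponds to $\wt(\eta)$ and the coefficient of $\delta$ matches $\Deg(\eta)$; in particular, $\gch\bigl(V_{\lng}^+(\lambda)/Z_{\lng}^+(\lambda)\bigr) = \gch \QLS(\lambda)$. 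For $w \in W^J$, the extremal weight vector $S_w^{\norm} v_\lambda$ should project to the straight-line QLS path $\eta_w := (w\,;\,0,\,1)$, whose initial direction is $w$.

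For piece (ii), since $V_w^+(\lambda) = U_q^+(\Fg_{\af})\,S_w^{\norm} v_\lambda$, the image of $\CB_w^+(\lambda)$ in $\QLS(\lambda)$ is the sub-$\BB$-set generated from $\eta_w$ by the Kashiwara operators $\ti{e}_j$ for $j \in I_{\af}$. I would prove that this subset coincides with $\QLS_w(\lambda) = \{\eta \in \QLS(\lambda) : \iota(\eta) \le w\}$ by induction on $\ell(\lng) - \ell(w)$: the base case $w = \mcr{\lng}$ is contained in \cite[\S7]{LNSSS2}, and the inductive step would match the Demazure-type recursion for $\CB_w^+(\lambda)$ coming from the semi-infinite Bruhat covering relations (via \cite[\S4.1]{NS} and the containment $V_w^+(\lambda) \subset V_{w'}^+(\lambda)$ for $\mcr{w} \sile \mcr{w'}$) with the Demazure-operator recursion for $\gch \QLS_w(\lambda)$ of Proposition~\ref{prop:dem}. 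Combining the two pieces yields $\gch U_w^+(\lambda) = \gch \QLS_w(\lambda) = \Mac{w\lambda}$. The main obstacle is piece (ii): translating the semi-infinite Bruhat condition on $(W^J)_{\af}$ that cuts out $\CB_w^+(\lambda) \subset \CB(\lambda)$ into the finite Bruhat condition $\iota(\eta) \le w$ on QLS paths, and verifying that the quotient by $Z_{\lng}^+(\lambda)$ induces no collapse or enlargement of the image. The cleanest route is to use Proposition~\ref{prop:dem} to reduce everything to the already established base case.
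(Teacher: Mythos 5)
Your overall strategy --- reduce to Theorem~\ref{thm:Mac0} by proving $\gch U_w^+(\lambda) = \gch \QLS_w(\lambda)$, and split this into (i) matching the ambient crystal $\CB(\lambda)$ with (semi-infinite LS paths projecting onto) $\QLS(\lambda)$ including the $\delta$-grading, and (ii) matching the image of $\CB_w^+(\lambda)$ with $\QLS_w(\lambda)$ --- is essentially the shape of the paper's argument, and your piece (i) is carried out in the paper via \cite[Theorem~7.2.2]{NS}, the isomorphism $\Psi_\lambda^\vee : \CB(\lambda) \xrightarrow{\sim} \BB^{\si}(\lambda)$, and the degree computation \eqref{eq:deg}. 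Where you genuinely diverge is in piece (ii). The paper does \emph{not} run any induction: it cites \cite[Theorem~7.2.2\,(2)]{NS}, which asserts in one shot that $\Psi_\lambda^\vee$ carries $\CB(U_w^+(\lambda)) = \CB_w^+(\lambda)\setminus\CB(Z_{\lng}^+(\lambda))$ precisely onto $\{\pi_\eta \mid \eta\in\QLS(\lambda),\ \iota(\pi_\eta)\le w\}$, for every $w$ simultaneously. The translation between the semi-infinite Bruhat order cutting out $\CB_w^+(\lambda)$ and the finite Bruhat condition $\iota(\eta)\le w$ --- which you correctly flag as the main obstacle --- is exactly the content of that cited theorem, and the paper simply imports it.

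Your proposed alternative for (ii), an induction driven by Proposition~\ref{prop:dem}, has a genuine gap. Proposition~\ref{prop:dem} gives the Demazure recursion $\gch\QLS_w(\lambda) = D_i\,\gch\QLS_{r_iw}(\lambda)$ on the \emph{combinatorial} side only. To close the induction you would additionally need the parallel statement $\gch U_w^+(\lambda) = D_i\,\gch U_{r_iw}^+(\lambda)$ on the \emph{module} side, which amounts to a string property for the subsets $\CB(U_w^+(\lambda))\subset\CB(\lambda)$ analogous to Lemma~\ref{lem:string}. This is not contained in \cite[\S4.1]{NS} (that section is about global bases of the $V_x^+(\lambda)$, and the inclusion $V_w^+(\lambda)\subset V_{w'}^+(\lambda)$ for $\mcr{w}\sile\mcr{w'}$ is merely a containment, not a Demazure-operator relation), and establishing it is nontrivial representation theory of comparable difficulty to the theorem you are trying to prove. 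Minor additional point: as stated, your induction runs in the wrong direction --- $D_i$ increases length, so the base case should be $w=e$, not $w=\mcr{\lng}$, and the inductive step should pass from $r_iw$ to $w$. In short, the paper's route is more economical because it outsources piece (ii) to a theorem of Naito--Sagaki, whereas your route requires proving a new Demazure recursion on the module side that you have not supplied.
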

%
%
\subsection{Semi-infinite Lakshmibai-Seshadri paths.}
\label{subsec:SLS}

We keep the notation and setting of \S\ref{subsec:gch}; 
recall that $\lambda = \sum_{i \in I} m_{i} \vpi_{i}$ is 
a dominant integral weight for $\Fg$, and 
$J=J_{\lambda}= \bigl\{ i \in I \mid 
\pair{\alpha_i^{\vee}}{\lambda}=0 \bigr\} \subset I$.
%
%
\begin{dfn}\label{dfn:SBa}
For a rational number $0 < \tau < 1$, 
define $\SBb{\tau}$ to be the subgraph of $\SB$ 
with the same vertex set but having only the edges of the form:
$x \edge{\beta} y$ with 
$\tau \pair{\beta^{\vee}}{x\lambda} \in \BZ$.
\end{dfn}
%
%
\begin{dfn}\label{dfn:SiLS}
A semi-infinite Lakshmibai-Seshadri path (SiLS path for short) of 
shape $\lambda $ is, by definition, a pair $(\bm{y}\,;\,\bm{\tau})$ of 
a (strictly) decreasing sequence $\bm{y} : y_1 \sig \cdots \sig y_s$ 
of elements in $(W^J)_{\af}$ and an increasing sequence 
$\bm{\tau} : 0 = \tau_0 < \tau_1 < \cdots  < \tau_s =1$ 
of rational numbers satisfying the condition that 
there exists a directed path from $y_{u+1}$ to  $y_u$ in 
$\SBb{\tau_u}$ for each $u = 1,\,2,\,\dots,\,s-1$. 
We denote by $\sLS$ the set of all SiLS paths of shape $\lambda$.
\end{dfn}

In \cite[\S3.1]{INS}, we defined root operators 
$e_{j}$ and $f_{j}$, $j \in I_{\af}$, on $\sLS$, 
and proved that the set $\sLS$, equipped with these root operators, 
is a crystal with weights in $X_{\af}$. 
%
%
\begin{thm}[{\cite[Theorem~3.2.1]{INS}}] \label{thm:INS}
Keep the notation and setting above. 
There exists an isomorphism of crystals 
between the crystal basis $\CB(\lambda)$ of the extremal weight module 
$V(\lambda)$ of extremal weight $\lambda$ and the crystal $\sLS$ of 
SiLS paths of shape $\lambda$.
\end{thm}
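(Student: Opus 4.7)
The strategy is to invoke Kashiwara's uniqueness theorem for the crystal basis of a level-zero extremal weight module: any regular $U_{q}(\Fg_{\af})$-crystal with weight lattice $X_{\af}$ that contains an extremal weight element of weight $\lambda$ and is generated by this element under the Kashiwara operators is isomorphic to $\CB(\lambda)$. The proof then reduces to three verifications on $\sLS$: that it carries a regular crystal structure, that it contains a distinguished extremal element of weight $\lambda$, and that it is generated by this element under the Kashiwara operators.

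First, taking the root operators $e_{j}, f_{j}$ on $\sLS$ defined in \cite[\S3.1]{INS}, I would verify that $\sLS$ with the natural weight function $\wt(\bm{y};\bm{\tau}) = \sum_{u}(\tau_u - \tau_{u-1}) y_u \lambda \in X_{\af}$ is a regular crystal. Regularity amounts to showing that $\varepsilon_j$ and $\varphi_j$ compute the lengths of the $j$-strings, which follows Littelmann's original template after replacing the ordinary affine length with the semi-infinite length $\sell$ of Definition~\ref{dfn:sell}, and after replacing the finite Bruhat order with the semi-infinite order $\sile$ of Definition~\ref{def:SiB}.

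Next I would identify the distinguished ``straight-line'' element $\eta_{\lambda} := (e\,;\,0,\,1) \in \sLS$, whose weight is $\lambda$, and show that it is extremal. Concretely, for every $x \in W_{\af}$ and any reduced expression $x = r_{j_1} \cdots r_{j_N}$ in $W_{\af}$, applying the corresponding string of maximal-power Kashiwara operators to $\eta_{\lambda}$ should produce a single-piece SiLS path of weight $x\lambda$ that is independent of the chosen reduced expression. This is the heart of the argument, and it requires a careful case analysis of how $e_{j}, f_{j}$ act on single-piece SiLS paths, together with the fact that a cover $y \sil r_{\beta}y$ in $\SB$ introduces exactly one new division point under a single $f_{j}$-move. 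Handling the braid relations for this action is where the theory of $(W^J)_{\af}$ (see \eqref{eq:stabilizer}) enters crucially.

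Finally, to show that $\sLS$ is generated by $\eta_{\lambda}$, I would induct on the number $s$ of linear pieces of an arbitrary $\eta = (\bm{y};\bm{\tau}) \in \sLS$. For $s \geq 2$, the existence of a directed path from $y_{u+1}$ to $y_u$ in $\SBb{\tau_u}$ (required by Definition~\ref{dfn:SiLS}) allows one to locate an index $j$ for which a suitable power of $e_{j}$ either collapses an adjacent pair of pieces or moves one of the $y_u$ to a semi-infinitely smaller element, eventually reducing $\eta$ to $\eta_{\lambda}$. Once all three properties hold, Kashiwara's characterization delivers the isomorphism $\CB(\lambda) \cong \sLS$. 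The hardest step is establishing the extremality of $\eta_{\lambda}$: this is where the delicate interplay between $\sell$, the partial order $\sile$, and the action of the Kashiwara operators on SiLS paths must be worked out in full, and it is also where the semi-infinite (as opposed to ordinary affine) nature of the theory asserts itself most strongly.
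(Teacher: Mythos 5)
This theorem is imported from \cite[Theorem~3.2.1]{INS}; the present paper does not reprove it, so there is no in-paper argument to compare against. Your blueprint (regular crystal $+$ extremal element of weight $\lambda$ $+$ generated by that element $\Rightarrow$ isomorphism with $\CB(\lambda)$) is the standard highest-weight uniqueness template transplanted to the affine setting, and it does not survive the passage to level zero, which is exactly the regime of this theorem. The concrete failure is at your third verification: for a level-zero dominant integral weight $\lambda$, the crystal $\CB(\lambda)$ is \emph{not} connected in general, and hence $\sLS$ is not generated by $\pi_{e}=(e\,;\,0,\,1)$ under the Kashiwara operators. The paper tacitly signals this in \S\ref{subsec:SLS}, where it is forced to single out the \emph{connected component} $\BB_{0}^{\si}(\lambda)$ of $\sLS$ containing $\pi_{e}$ and to invoke \cite[Lemma~7.1.2]{NS} to lift a QLS path $\eta$ to an element $\pi_{\eta}$ specifically inside that component. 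Any uniqueness criterion of the form you invoke, even if it held verbatim, would therefore only identify $\BB_{0}^{\si}(\lambda)$ with the connected component of $\CB(\lambda)$ through $u_{\lambda}$, not the full crystals. The disconnectedness also explains why the paper's definition of $Z_{\lng}^{+}(\lambda)$ in \S\ref{subsec:gch} needs the imaginary PBW elements $S_{\bc}$: these are exactly the operators that move between the pieces your generation argument cannot see.

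I am also not aware of a Kashiwara uniqueness theorem of the clean form you state for level-zero extremal weight crystals; the available structural results (Kashiwara's level-zero papers, \cite{BN}) describe $\CB(\lambda)$ via tensor products of the fundamental crystals $\CB(\vpi_{i})$ together with bookkeeping in the $\delta$-direction, and the isomorphism in \cite{INS} is assembled by matching this structure with the path model through the projection $\ol{\phantom{\pi}}:\sLS\rightarrow\BB(\lambda)$ of Remark~\ref{rem:INS}, the identification $\QLS(\lambda)=\BB(\lambda)_{\cl}$, and the degree/affinization data, rather than by invoking an abstract characterization axiom. Your first two verifications (regularity of $\sLS$ and extremality of $\pi_{e}$) are genuinely needed as lemmas in that argument, but the proof cannot close without confronting the disconnectedness, and that is precisely the step missing from your sketch.
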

%
%
\begin{rem} \label{rem:INS}
For $\pi=(y_1,\,\dots,\,y_s\,;\,\tau_{0},\,\tau_{1},\,\dots,\,\tau_{s}) \in \sLS$, 
we define the piecewise-linear, continuous map 
$\ol{\pi}: [0,1] \rightarrow \BR \otimes_{\BZ} X_{\af}$ by
%
%
\begin{equation} \label{eq:path2}
\ol{\pi}(t)=\sum_{p=1}^{u-1}
(\tau_{p}-\tau_{p-1})y_{p}\lambda+ 
(t-\tau_{u-1})y_{u}\lambda \quad 
\text{for $\tau_{u-1} \le t \le \tau_{u}$, $1 \le u \le s$}.
\end{equation}
Then we know from \cite[Proposition~3.1.3]{INS} that 
$\ol{\pi}$ is a Lakshmibai-Seshadri (LS for short) path of shape $\lambda$; 
for the definition of LS paths of shape $\lambda$, 
see \cite{L} and \cite[\S2.2 and 2.3]{LNSSS2}. 
We denote by $\BB(\lambda)$ the set of all LS paths of shape $\lambda$. 
In fact, the map $\ol{\phantom{\pi}} : 
\sLS \rightarrow \BB(\lambda)$, $\pi \mapsto \ol{\pi}$, is 
a surjective, strict crystal morphism. 
\end{rem}

Define a surjective map 
$\cl : (W^{J})_{\af} \twoheadrightarrow W^{J}$ by
\begin{equation*}
\cl (x) := w \quad \text{if $x = wzt_{\xi}$ 
   for $w \in W^{J}$, $z \in W_{J}$, and $\xi \in Q^{\vee}$.}
\end{equation*}
Then, for $\pi = (y_{1},\,\dots,\,y_{s}\,;\,\tau_{0},\,\tau_{1},\,\dots,\,\tau_{s}) 
\in \sLS$, we set (see \cite[Remark~6.2.1]{NS})
\begin{equation*}
\cl(\pi): = 
 (\cl(y_{1}),\,\dots,\,\cl(y_{s})\,;\,\tau_{0},\,\tau_{1},\,\dots,\,\tau_{s});
\end{equation*}
for each $1 \le p < q \le s$ such that $\cl(y_{p})= \cdots = \cl(y_{q})$, 
we drop $\cl(y_{p}),\,\dots,\,\cl(y_{q-1})$ and $\tau_{p},\,\dots,\,\tau_{q-1}$
from this expression of $\cl(\pi)$. 

Let $\BB_{0}^{\si}(\lambda)$ denote the connected component of $\sLS$ 
containing $\pi_{e}:=(e\,;\,0,\,1)$. We know from 
\cite[Lemma~7.1.2]{NS} that for each $\eta \in \QLS(\lambda)=\BB(\lambda)_{\cl}$ 
(see Remark~\ref{rem:QLS}), there exists a unique element 
$\pi_{\eta}=(y_1,\,\dots,\,y_s\,;\,\bm{\tau}) \in \BB_{0}^{\si}(\lambda)$ such that 
$\iota(\pi_{\eta}):=y_1 \in W^{J}$ and $\cl(\pi_{\eta})=\eta$. 
We claim that
%
%
\begin{equation} \label{eq:deg}
\wt(\pi_{\eta}) = \lambda-\beta - \Deg(\eta) \delta, \qquad 
\text{where $\beta \in Q^{+}:=\sum_{j \in I} \BZ_{\ge 0} \alpha_{j}$}. 
\end{equation}
Indeed, since $\wt(\pi_{\eta}) = \wt(\ol{\pi_{\eta}})$ by their definitions, 
it suffices to show that $\ol{\pi_{\eta}} \in \BB(\lambda)$ satisfies 
the following conditions (see \cite[Proposition~3.1.3]{NSdeg} and 
\cite[\S4.2 and Theorem~4.6]{LNSSS2}):
\begin{enu}
\item[(a)] $\cl(\ol{\pi_{\eta}}(t)) = \eta(t)$ for all $t \in [0,1]$, 
where $\cl:\BR \otimes_{\BZ} X_{\af} \twoheadrightarrow 
(\BR \otimes_{\BZ} X_{\af})/\BR\delta$ denotes the canonical projection; 

\item[(b)] $\ol{\pi_{\eta}}$ is contained 
in the connected component $\BB_{0}(\lambda)$ of 
$\BB(\lambda)$ containing $\pi_{\lambda}$, where 
$\pi_{\lambda}(t):=t\lambda$ for $t \in [0,1]$; 

\item[(c)] $\iota(\ol{\pi_{\eta}})=y_{1}\lambda \in \lambda-Q^{+}$. 
\end{enu}
If $x \in W_{\af}$ is of the form 
$x = wzt_{\xi}$ with $w \in W^{J}$, $z \in W_{J}$, and $\xi \in Q^{\vee}$, 
then $x\lambda = w\lambda - \pair{\xi}{\lambda}\delta$ 
(recall that $\pair{c}{\lambda}=0$), and hence 
$x\lambda \equiv w\lambda$ modulo $\BR\delta$. 
Therefore, assertion (a) is obvious from the definitions of 
$\ol{\phantom{\pi}} : \sLS \rightarrow \BB(\lambda)$ and the maps $\cl$.
Also, since $\pi_{\eta} \in \BB^{\si}_{0}(\lambda)$, 
there exists a monomial $Y$ 
in root operators such that $\pi_{\eta}=Y\pi_{e}$. 
Because $\ol{\phantom{\pi}} : \sLS \rightarrow \BB(\lambda)$ 
commutes with the action of root operators, 
we have $\ol{\pi_{\eta}}=\ol{Y\pi_{e}}=Y\ol{\pi_{e}}=Y\pi_{\lambda}$. 
Hence we obtain $\ol{\pi_{\eta}} \in \BB_{0}(\lambda)$.
Finally, since $\iota(\pi_{\eta})=y_{1} \in W^{J}$ and 
$\lambda$ is a dominant integral weight for $\Fg$, 
it follows that $\iota(\ol{\pi_{\eta}}) = y_{1}\lambda$ 
is contained in $\lambda-Q^{+}$. This proves \eqref{eq:deg}. 
%
%
\subsection{Proof of Theorem~\ref{thm:gch}.}
\label{subsec:prf-gch}

We know from \cite[Theorem~7.2.2\,(1)]{NS} that 
there exists a subset $\CB(Z_{\lng}^{+}(\lambda))$ of $\CB(\lambda)$ 
such that $\bigl\{G(b) \mid b \in \CB(Z_{\lng}^{+}(\lambda))\bigr\}$ is 
the global basis of $Z_{\lng}^{+}(\lambda)$. Also, recall that 
$\bigl\{G(b) \mid b \in \CB_{w}^{+}(\lambda)\bigr\}$ is 
the global basis of $V_{w}^{+}(\lambda)$. Therefore,
\begin{equation*}
\bigl\{G(b) \ \mathrm{mod} \ Z_{\lng}^{+}(\lambda) \mid 
b \in \CB(U_{w}^{+}(\lambda)):=
\CB_{w}^{+}(\lambda) \setminus \CB(Z_{\lng}^{+}(\lambda)) \bigr\}
\end{equation*}
is the global basis of $U_{w}^{+}(\lambda)$, 
which is the image of $V_{w}^{+}(\lambda)$ under
the canonical projection 
$V_{\lng}^{+}(\lambda) \twoheadrightarrow 
 V_{\lng}^{+}(\lambda)/Z_{\lng}^{+}(\lambda)$.

We know from \cite[Theorem~7.2.2\,(2)]{NS} that
there exists an isomorphism $\Psi_{\lambda}^{\vee}:
\CB(\lambda) \stackrel{\sim}{\rightarrow} \BB^{\si}(\lambda)$ 
of crystals, which maps $\CB(U_{w}^{+}(\lambda)) \subset \CB(\lambda)$ to
\begin{equation*}
\bigl\{\pi_{\eta} \mid 
 \text{\rm $\eta \in \QLS(\lambda)$ such that $w \ge \iota(\pi_{\eta})$} \bigr\} 
 \subset \sLS.
\end{equation*}
Since $\iota(\pi_{\eta}) \in W^{J}$, we see that $\iota(\pi_{\eta}) = \iota(\eta)$. 
Therefore, the subset above is identical to the set 
$\bigl\{\pi_{\eta} \mid \eta \in \QLS_{w}(\lambda) \bigr\}$. 
Hence we compute:
\begin{align*}
\gch U_{w}^{+}(\lambda) 
 & = \left(\sum_{b \in \CB(U_{w}^{+}(\lambda))} x^{\wt (b)}\right)\Biggm|_{x^{\delta}=q}
   = \left(\sum_{\eta \in \QLS_{w}(\lambda)} x^{\wt(\pi_{\eta})}\right)\Biggm|_{x^{\delta}=q} \\[3mm]
 & = \left(\sum_{\eta \in \QLS_{w}(\lambda)} x^{\wt(\eta)-\Deg(\eta)\delta}\right)\Biggm|_{x^{\delta}=q}
   \qquad \text{by \eqref{eq:deg}} \\[3mm]
& = \sum_{\eta \in \QLS_{w}(\lambda)} q^{-\Deg(\eta)} x^{\wt(\eta)}
  = \Mac{w\lambda} \qquad \text{by Theorem~\ref{thm:Mac0}}.
\end{align*}
This completes the proof of Theorem~\ref{thm:gch}.

\appendix

\section*{Appendix.}

%
\section{Recursive formulas in terms of Demazure operators.}
\label{sec:rec}
We use the notation of \S\ref{subsec:setting} and \S\ref{subsec:notation}. 
Fix a dominant integral weight $\lambda \in X$, and 
set $J=J^{\lambda}=\bigl\{i \in I \mid \pair{\alpha_{i}^{\vee}}{\lambda}=0\bigr\}$. 
For each $i \in I$, we define a $\BZ[q]$-linear operator $D_{i}$ 
(called a Demazure operator) on $\bigl(\BZ[q]\bigr)[e^{\xi}\,;\,\xi \in X]$ by: 
\begin{align}
D_{i}(e^{\xi}) & := 
\frac{ e^{\xi+\rho} - e^{r_{i}(\xi+\rho)} }
     { 1-e^{-\alpha_{i}} } e^{-\rho} \nonumber \\[3mm]
& =
\begin{cases}
e^{\xi}\bigl(1+e^{-\alpha_{i}}+ \cdots +e^{-n\alpha_{i}}\bigr) 
  & \text{if $n=\pair{\alpha_{i}^{\vee}}{\xi} \ge 0$}, \\[1.5mm]
0 & \text{if $n=\pair{\alpha_{i}^{\vee}}{\xi}=-1$}, \\[1.5mm]
-e^{\xi}\bigl(e^{\alpha_{i}}+ \cdots +e^{(-n-1)\alpha_{i}}\bigr)
  & \text{if $n=\pair{\alpha_{i}^{\vee}}{\xi} \le -2$}. 
\end{cases} \label{eq:Demazure}
\end{align}
In this appendix, we give a recursive formula 
for $\gch \QLS_{w}(\lambda)$ (Proposition~\ref{prop:dem}) 
and one for $\Mac{w\lambda}$ (Proposition~\ref{prop:demM}), 
both of which are in terms of Demazure operators. 
%
%
\subsection{Recursive formula for $\gch \QLS_{w}(\lambda)$.}
\label{sec:rec-gch}
%
%
\begin{prop} \label{prop:dem}
Let $w \in W^{J}$ and $i \in I$ be such that $w > r_{i}w$; 
note that $r_{i}w \in W^{J}$ by {\rm \cite[Lemma~5.8]{LNSSS1}}. 
Then we have 
\begin{equation*}
\gch \QLS_{w}(\lambda) = D_{i} \gch \QLS_{r_{i}w}(\lambda). 
\end{equation*}
\end{prop}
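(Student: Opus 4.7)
The plan is to prove this Demazure recursion via $i$-string decomposition, the standard technique for Demazure-type character identities. I would partition $\QLS(\lambda)$ into $i$-strings under the finite crystal operators $e_{i}, f_{i}$ (whose existence and properties come from the realization of $\QLS(\lambda)$ as $\cl(\BB_{0}^{\si}(\lambda))$, cf.\ \S\ref{subsec:SLS}) and verify that $\QLS_{r_{i}w}(\lambda)$ and $\QLS_{w}(\lambda)$ intersect each string in a way that matches the action of $D_{i}$.

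Two crystal facts underpin the argument. First, the degree $\Deg$ is constant along each $i$-string: by \eqref{eq:deg}, $-\Deg(\eta)$ is the coefficient of $\delta$ in $\wt(\pi_{\eta})$, and the finite operators $e_{i}, f_{i}$ shift weights by $\pm\alpha_{i}$, which has no $\delta$-component. Second, the initial direction exhibits a \emph{one-jump} behavior along each $i$-string: if $\eta_{0}$ is the top and $x_{0} := \iota(\eta_{0}) \in W^{J}$, then $\iota(f_{i}^{k}\eta_{0}) = y$ for all $1 \le k \le K$, where $y := \mcr{r_{i}x_{0}}^{J}$ satisfies $y \ge x_{0}$ in the Bruhat order; explicitly, if $\pair{\alpha_{i}^{\vee}}{x_{0}\lambda} = 0$ then $r_{i}x_{0} \in x_{0}W_{J}$ and $y = x_{0}$, while if $\pair{\alpha_{i}^{\vee}}{x_{0}\lambda} > 0$ then $r_{i}x_{0} \in W^{J}$ with $y = r_{i}x_{0} > x_{0}$. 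This is the QLS analog of Littelmann's classical LS-path result; I would prove it by lifting $\eta$ to $\pi_{\eta} \in \BB_{0}^{\si}(\lambda) \subset \sLS$, where the finite crystal operators act as on LS paths, and then projecting via $\cl$.

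These two facts combine into a \emph{three-option property}: for each $u \in W^{J}$ and each $i$-string $\mathcal{S}$, the intersection $\QLS_{u}(\lambda) \cap \mathcal{S}$ equals $\emptyset$ (if $x_{0} \not\le u$), the singleton $\{\eta_{0}\}$ (if $x_{0} \le u$ but $y \not\le u$), or the full string $\mathcal{S}$ (if $y \le u$). Using the Bruhat lifting property (for $w > r_{i}w$, one has $y' \le w$ iff $y' \le r_{i}w$ or $r_{i}y' \le r_{i}w$) together with $r_{i}x_{0} > x_{0}$ in Bruhat order, a short case analysis shows that the only possible transitions from $\QLS_{r_{i}w}(\lambda) \cap \mathcal{S}$ to $\QLS_{w}(\lambda) \cap \mathcal{S}$ are $\emptyset \mapsto \emptyset$, $\{\eta_{0}\} \mapsto \mathcal{S}$, and $\mathcal{S} \mapsto \mathcal{S}$. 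Writing $d_{\mathcal{S}}$ for the constant value of $\Deg$ on $\mathcal{S}$, and using $D_{i}^{2} = D_{i}$ together with $D_{i}(e^{\wt(\eta_{0})}) = \sum_{k=0}^{K} e^{\wt(\eta_{0})-k\alpha_{i}}$ (valid since $K = \pair{\alpha_{i}^{\vee}}{\wt(\eta_{0})}$), one checks directly from \eqref{eq:Demazure} that $D_{i}$ takes the three possible per-string contributions to $\gch \QLS_{r_{i}w}(\lambda)$ --- namely $0$, $q^{-d_{\mathcal{S}}}e^{\wt(\eta_{0})}$, and $q^{-d_{\mathcal{S}}}D_{i}(e^{\wt(\eta_{0})})$ --- to $0$, $q^{-d_{\mathcal{S}}}D_{i}(e^{\wt(\eta_{0})})$, and $q^{-d_{\mathcal{S}}}D_{i}(e^{\wt(\eta_{0})})$, respectively, matching the three transitions. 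Summing over $i$-strings yields the claimed identity.

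The main obstacle is rigorously establishing the one-jump behavior of $\iota$ in the QLS setting: although the analog for classical LS paths is well-known, one must verify that it descends correctly under the projection $\cl : \sLS \to \QLS(\lambda)$ and that the passage to the parabolic quotient $W^{J}$ yields $\mcr{r_{i}x_{0}}^{J} \ge x_{0}$ for general (not necessarily regular) $\lambda$ --- the latter inequality is precisely what rules out the otherwise problematic configuration of an empty intersection with $\QLS_{r_{i}w}(\lambda)$ but a nonempty one with $\QLS_{w}(\lambda)$.
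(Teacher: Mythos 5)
Your overall strategy — decompose $\QLS(\lambda)$ into $\alpha_{i}$-strings, use that $\Deg$ is constant along each string, determine the intersections $\QLS_{u}(\lambda) \cap \mathcal{S}$, and match per-string contributions under $D_{i}$ using $D_{i}^{2}=D_{i}$ — is the same as the paper's (Lemmas~\ref{lem:rec} and \ref{lem:string}, followed by the proof of Proposition~\ref{prop:dem}). Your ``three-option property'' is exactly Lemma~\ref{lem:string}, and your case analysis via Bruhat lifting plays the role of Lemma~\ref{lem:rec} together with the appeal to \cite[Lemma~4.1]{L}.

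However, there is a concrete error in your stated ``one-jump'' law. You assert that for a string top $\eta_{0}$ with $x_{0}=\iota(\eta_{0})$ one always has $\iota(f_{i}^{k}\eta_{0})=\mcr{r_{i}x_{0}}^{J}$ for $1\le k\le K$, and that the jump occurs precisely when $\pair{\alpha_{i}^{\vee}}{x_{0}\lambda}>0$. This is false: the dichotomy $\iota(f_{i}\eta_{0})\in\{x_{0},\,\mcr{r_{i}x_{0}}^{J}\}$ is correct, and $\iota(f_{i}\eta_{0})=\mcr{r_{i}x_{0}}^{J}\ne x_{0}$ does force $\pair{\alpha_{i}^{\vee}}{x_{0}\lambda}>0$, but the converse fails. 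Concretely, for $\Fg=\mathfrak{sl}_{2}$, $\lambda=4\vpi_{1}$, take $\eta_{0}=(e,\,r_{1},\,e\,;\,0,\,1/4,\,1/2,\,1)$. Here $e_{1}\eta_{0}=\bzero$ and $\pair{\alpha_{1}^{\vee}}{x_{0}\lambda}=4>0$, but $H_{1}^{\eta_{0}}$ returns to its minimum $0$ at $t=1/2$, so $f_{1}$ acts on $[1/2,\,3/4]$ and $\iota(f_{1}\eta_{0})=e=x_{0}\ne r_{1}$. Thus your three-option characterization, as stated (with $y=\mcr{r_{i}x_{0}}^{J}$), misclassifies $\QLS_{u}(\lambda)\cap\mathcal{S}$ for such strings: for $u=e$ it would predict the singleton $\{\eta_{0}\}$ where the intersection is actually all of $\mathcal{S}$. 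The correct and sufficient fact (what the paper actually uses inside Lemma~\ref{lem:string}) is only that $\iota(f_{i}^{k}\eta_{0})$ is constant for $k\ge 1$ and equals either $x_{0}$ or $r_{i}x_{0}$, the latter forcing $\pair{\alpha_{i}^{\vee}}{x_{0}\lambda}>0$. With $y$ defined as $\iota(f_{i}\eta_{0})$ rather than by your formula, your case analysis and the $D_{i}$ computation go through exactly as you describe (the extra strings with $y=x_{0}$ despite $\pair{\alpha_{i}^{\vee}}{x_{0}\lambda}>0$ simply never realize the singleton option, landing in the $\mathcal{S}\mapsto\mathcal{S}$ transition), so the fix is local, but as written the proposal contains a false intermediate claim that needs correcting before the argument is sound.
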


Let $U_{q}'(\Fg_{\af})$ denote the quantum affine algebra 
without the degree operator associated to $\Fg_{\af}$. 
We know that the set $\QLS(\lambda)=\BB(\lambda)_{\cl}$ (see Remark~\ref{rem:QLS}), 
equipped with root operators $e_{j}$, $f_{j}$, $j \in I_{\af}$, 
is a $U_{q}'(\Fg_{\af})$-crystal; for the definition of root operators, 
see \cite[\S2.3]{LNSSS2} and \cite[\S2.2]{NSdeg}.
We prove Proposition~\ref{prop:dem} 
by using this $U_{q}'(\Fg_{\af})$-crystal structure on 
$\QLS(\lambda)=\BB(\lambda)_{\cl}$ 
(cf. \cite[Theorem in \S5.2]{L-Inv}). 
%
%
\begin{lem}[recursive relation] \label{lem:rec}
Let $w \in W^{J}$ and $i \in I$ be such that $w > r_{i}w$. 
We have 
\begin{equation*}
\QLS_{w}(\lambda) = 
 \bigcup_{p \ge 0} f_{i}^{p}\QLS_{r_{i}w}(\lambda) \setminus \{\bzero\}. 
\end{equation*}
\end{lem}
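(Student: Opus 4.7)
The plan is to prove the two inclusions directly by tracking how the initial direction $\iota(\eta)$ transforms under the $U_{q}(\Fg)$-root operators $e_{i}$ and $f_{i}$ on $\QLS(\lambda)$, and then to apply the lifting property of the Bruhat order. The argument rests on two properties of the crystal structure on $\QLS(\lambda) = \BB(\lambda)_{\cl}$ (see Remark~\ref{rem:QLS}), which I would establish using the theory of (projected) Lakshmibai--Seshadri paths, as developed in \cite[\S5]{LNSSS2} (compare also \cite[\S5]{L-Inv}): \textbf{(P1)} for $\eta \in \QLS(\lambda)$ and $i \in I$, whenever $e_{i}\eta \ne \bzero$ (resp.~$f_{i}\eta \ne \bzero$), the initial direction $\iota(e_{i}\eta)$ (resp.~$\iota(f_{i}\eta)$) belongs to $\{\iota(\eta),\,\mcr{r_{i}\iota(\eta)}\}$, since a root operator either acts strictly inside the path, or modifies its initial segment and thereby replaces $\iota(\eta)$ by $\mcr{r_{i}\iota(\eta)}$; \textbf{(P2)} if $\varepsilon_{i}(\eta) = 0$, then $r_{i}\iota(\eta) > \iota(\eta)$ in the Bruhat order on $W$, because $\varepsilon_{i}(\eta) = 0$ forces $t \mapsto \pair{\alpha_{i}^{\vee}}{\eta(t)}$ to be non-negative, whence its initial slope $\pair{\alpha_{i}^{\vee}}{\iota(\eta)\lambda}$ is $\ge 0$, and this, combined with $\iota(\eta) \in W^{J}$ (so that $\iota(\eta)\Phi_{J}^{\vee-} \subset \Phi^{\vee-}$), rules out $\iota(\eta)^{-1}\alpha_{i} \in \Phi^{-}$.

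For the inclusion ``$\supseteq$'', let $\eta' \in \QLS_{r_{i}w}(\lambda)$; then $\iota(\eta') \le r_{i}w \le w$, so $\eta' \in \QLS_{w}(\lambda)$. It then suffices to show, by induction on $p$, that if $\xi \in \QLS_{w}(\lambda)$ and $f_{i}\xi \ne \bzero$, then $f_{i}\xi \in \QLS_{w}(\lambda)$: by (P1), $\iota(f_{i}\xi)$ is either $\iota(\xi) \le w$ or $\mcr{r_{i}\iota(\xi)}$, and in the latter case, since $r_{i}w < w$, the lifting property of the Bruhat order (cf.~\cite[Proposition~2.2.7]{BB}) yields $r_{i}\iota(\xi) \le w$, whence $\mcr{r_{i}\iota(\xi)} \le r_{i}\iota(\xi) \le w$. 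The same argument, with $f_{i}$ replaced by $e_{i}$, shows that $\QLS_{w}(\lambda)$ is also closed under $e_{i}$ (whenever $e_{i}\xi \ne \bzero$).

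For the inclusion ``$\subseteq$'', take $\eta \in \QLS_{w}(\lambda)$, set $p := \varepsilon_{i}(\eta)$, and $\eta' := e_{i}^{p}\eta$. By the closure just noted, $\eta' \in \QLS_{w}(\lambda)$, i.e., $\iota(\eta') \le w$; moreover $\varepsilon_{i}(\eta') = 0$. By (P2), $r_{i}\iota(\eta') > \iota(\eta')$ in $W$, and combining this with $\iota(\eta') \le w$ and $r_{i}w < w$, another instance of the lifting property (in the equivalent form ``$x \le w \Leftrightarrow x \le r_{i}w$'' when $r_{i}x > x$ and $r_{i}w < w$) yields $\iota(\eta') \le r_{i}w$. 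Hence $\eta' \in \QLS_{r_{i}w}(\lambda)$, and since $\eta = f_{i}^{p}\eta'$, this completes the proof. The main obstacle I anticipate is the clean verification of (P1) and (P2); once these are in hand, the rest is a straightforward invocation of the lifting property of the Bruhat order.
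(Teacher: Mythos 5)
Your proof is correct and follows essentially the same approach as the paper's: both track the initial direction $\iota(\eta)$ under the root operators $e_{i}$, $f_{i}$, characterize when and how it changes (your (P1)--(P2) versus the paper's appeal to ``the definition of the root operator''), and finish with the lifting property of the Bruhat order (the paper invokes \cite[Lemma~4.1]{L}, you invoke \cite[Proposition~2.2.7]{BB} -- the same tool). Your reorganization -- proving $\QLS_{w}(\lambda)$ closed under both $e_{i}$ and $f_{i}$ once, then using (P2) to conclude -- is a slightly more economical packaging than the paper's two separate case analyses, but the underlying ideas coincide.
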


\begin{proof}
First we prove the inclusion $\subset$. 
Let $\eta \in \QLS_{w}(\lambda)$, and set $\eta':=e_{i}^{\max}\eta$.
It suffices to show that $\eta' \in \QLS_{r_{i}w}(\lambda)$; 
for simplicity of notation, we set $x:=\iota(\eta) \in W^{J}$. 
If $\iota(\eta')=\iota(\eta)=x$, then it follows from 
the definition of the root operator $e_{i}$ that 
\begin{equation*}
\pair{\alpha_{i}^{\vee}}{x\lambda} = 
\pair{\alpha_{i}^{\vee}}{\iota(\eta)\lambda} = 
\pair{\alpha_{i}^{\vee}}{\iota(\eta')\lambda} \ge 0, 
\end{equation*}
since $e_{i}\eta' = \bzero$. 
Because $\eta \in \QLS_{w}(\lambda)$ by the assumption, 
we have $x=\iota(\eta) \le w$. 
Also, from the assumption that $w > r_{i}w$ and $w \in W^{J}$, 
it follows that $r_{i}w \in W^{J}$ and $\pair{\alpha_{i}^{\vee}}{w\lambda} < 0$ 
by \cite[Lemmas~5.8 and 5.9]{LNSSS1}. 
Therefore, we deduce from \cite[Lemma~4.1\,a)]{L} 
applied to $x \le w$ that 
$x \le r_{i}w$, and hence $\iota(\eta') = \iota(\eta) = x \le r_{i}w$. 
Thus we obtain $\eta' \in \QLS_{r_{i}w}(\lambda)$, 
as desired. 
If $\iota(\eta') \ne \iota(\eta)$, then 
it follows from the definition of 
the root operator $e_{i}$ that
$\iota(\eta')=r_{i}\iota(\eta)=r_{i}x$ and 
\begin{equation*}
\pair{\alpha_{i}^{\vee}}{x\lambda} 
  = - \pair{\alpha_{i}^{\vee}}{r_{i}x\lambda} 
  = - \pair{\alpha_{i}^{\vee}}{r_{i}\iota(\eta)\lambda}
  = - \pair{\alpha_{i}^{\vee}}{r_{i}\iota(\eta')\lambda} < 0. 
\end{equation*}
Since $x=\iota(\eta) \le w$ by the assumption 
and $\pair{\alpha_{i}^{\vee}}{w\lambda} < 0$ as seen above, 
we deduce from \cite[Lemma~4.1\,c)]{L} applied to $x \le w$ that 
$r_{i}x \le r_{i}w$, and hence $\iota(\eta') = r_{i}x \le r_{i}w$. 
Therefore, we obtain $\eta' \in \QLS_{r_{i}w}(\lambda)$, 
as desired. This proves the inclusion $\subset$.

Next we prove the opposite inclusion $\supset$. 
Let $\eta' \in \QLS_{r_{i}w}(\lambda)$, and 
assume that $\eta:=f_{i}^{p}\eta' \ne \bzero$ 
for some $p \ge 0$. If $\iota(\eta)=\iota(\eta')$, 
then 
\begin{equation*}
\iota(\eta)=\iota(\eta') \le r_{i}w < w, 
\end{equation*}
and hence $\eta \in \QLS_{w}(\lambda)$. 
Assume now that $\iota(\eta) \ne r_{i}\iota(\eta')$, and hence 
$\iota(\eta)=r_{i}\iota(\eta')$; 
for simplicity of notation, 
we set $x':=\iota(\eta') \in W^{J}$. 
Then we see from the definition of the root operator $f_{i}$ that 
$\pair{\alpha_{i}^{\vee}}{x'\lambda} = \pair{\alpha_{i}^{\vee}}{\iota(\eta')\lambda} > 0$. 
Also, we have $\pair{\alpha_{i}^{\vee}}{r_{i}w\lambda} = 
- \pair{\alpha_{i}^{\vee}}{w\lambda} > 0$ as seen above and 
$x' = \iota(\eta') \le r_{i}w$ by the assumption. 
It follows from \cite[Lemma~4.1\,c)]{L} applied to $x' \le r_{i}w$
that $r_{i}x' \le w$, and hence $\iota(\eta) = r_{i}x' \le w$. 
Therefore, we obtain $\eta \in \QLS_{w}(\lambda)$, as desired. 
This proves the opposite inclusion $\supset$. 
This completes the proof of the lemma. 
\end{proof}

For $i \in I$ and $\eta \in \QLS(\lambda)$, 
let $S_{i}(\eta)$ denote the $\alpha_{i}$-string through $\eta$, that is, 
\begin{equation*}
S_{i}(\eta) : =
 \bigl\{ e_{i}^{p}\eta,\,f_{i}^{q}\eta \mid p,\,q \ge 0 \bigr\}
 \setminus \{\bzero\}.
\end{equation*}
%
%
\begin{lem}[string property] \label{lem:string}
Let $\eta \in \QLS(\lambda)$, and $i \in I$. 
For $z \in W^{J}$, 
\begin{equation*}
\QLS_{z}(\lambda) \cap S_{i}(\eta)=
\emptyset,\ 
\bigl\{e_{i}^{\max}\eta\bigr\},\ \text{\rm or}\ 
S_{i}(\eta).
\end{equation*}
\end{lem}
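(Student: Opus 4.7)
Set $\eta_0 := e_i^{\max}\eta$ and $N := \varphi_i(\eta_0)$, so that $S_i(\eta) = \{\eta_p := f_i^p \eta_0 : 0 \le p \le N\}$, and write $x := \iota(\eta_0) \in W^J$. Applying to each $\eta_p$ (noting that $e_i^{\max}\eta_p = \eta_0$) the case analysis from the proof of Lemma~\ref{lem:rec}, one obtains $\iota(\eta_p) \in \{x,\,r_i x\}$ for every $p$; moreover, whenever $\iota(\eta_p) = r_i x \ne x$ for some $p$, necessarily $r_i x \in W^J$, $\pair{\alpha_i^\vee}{x\lambda} > 0$, and consequently $r_i x > x$ in the Bruhat order.

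The first step is to verify that $\QLS_z(\lambda)$ is closed under the root operator $e_i$. Indeed, if $\eta' \in \QLS_z(\lambda)$ and $e_i \eta' \ne \bzero$, set $\eta'' := e_i \eta'$, so that $\eta' = f_i \eta''$. By the analysis in the ``$\supset$''-direction of the proof of Lemma~\ref{lem:rec} applied with $p = 1$, either $\iota(\eta'') = \iota(\eta')$, or $\iota(\eta') = r_i \iota(\eta'')$ with $\pair{\alpha_i^\vee}{\iota(\eta'')\lambda} > 0$; the latter yields $r_i \iota(\eta'') > \iota(\eta'')$, and hence $\iota(\eta'') < \iota(\eta') \le z$. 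In either event, $\iota(e_i \eta') = \iota(\eta'') \le z$, so $e_i \eta' \in \QLS_z(\lambda)$. It follows that $\QLS_z(\lambda) \cap S_i(\eta) = \{\eta_0,\,\eta_1,\,\ldots,\,\eta_k\}$ for some $k \in \{-1,\,0,\,1,\,\ldots,\,N\}$ (where $k = -1$ means the intersection is empty), and the task reduces to ruling out intermediate values $1 \le k \le N-1$.

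To rule out intermediate $k$, I would rely on the dichotomy that either $\iota(\eta_p) = x$ for every $0 \le p \le N$, or $\iota(\eta_0) = x$ and $\iota(\eta_p) = r_i x$ for every $1 \le p \le N$. Granted this, the conclusion is immediate: in the constant scenario the intersection is $\emptyset$ or $S_i(\eta)$ according as $x \le z$ fails or holds; in the switching scenario, using $r_i x > x$ together with the transitivity of the Bruhat order, the intersection is $\emptyset$ when $x \not\le z$ (since $x \le r_i x \le z$ would then contradict $x \not\le z$), is $S_i(\eta)$ when both $x \le z$ and $r_i x \le z$, and is exactly $\{\eta_0\} = \{e_i^{\max}\eta\}$ when $x \le z$ but $r_i x \not\le z$. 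The main obstacle is establishing this dichotomy. I would prove it by analyzing the action of the root operator $f_i$ on the projected Lakshmibai--Seshadri representative of $\eta_0$ in $\BB(\lambda)_{\cl}$ (see Remark~\ref{rem:QLS}), using the explicit description of root operators from \cite[\S4]{L}: the initial direction of $f_i^p \eta_0$ is governed by whether the reflection locus of the $p$-th application of $f_i$ reaches $t = 0$, and an inductive argument on $p$ shows that this locus is monotone in the following sense: if it fails to reach $t = 0$ at $p = 1$, then it fails to do so at every $p$ (leaving the first segment, and hence $\iota$, invariant throughout the string); conversely, once it has reached $t = 0$ (which, if it happens at all, already happens at $p = 1$), every subsequent application of $f_i$ continues to reflect the first segment, so that $\iota$ is pinned at $r_i x$ for all $p \ge 1$.
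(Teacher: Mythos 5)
Your proof is correct and follows essentially the same approach as the paper: both arguments hinge on the fact that the initial direction $\iota$ is constant on $\{f_i^p(e_i^{\max}\eta):p\ge 1\}$ (which the paper asserts directly from the definition of $f_i$, and which your dichotomy phrases equivalently), combined with the case analysis from Lemma~\ref{lem:rec}. Your reorganization — first establishing $e_i$-closure of $\QLS_z(\lambda)$ to see that the intersection is an initial segment of the string from the top, then invoking the dichotomy — is a clean packaging of the same ideas, and the reflection-locus sketch for the dichotomy is sound.
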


\begin{proof}
For simplicity of notation, we set $\eta':=e_{i}^{\max}\eta$. 
We will prove that if 
$\QLS_{z}(\lambda) \cap S_{i}(\eta)$ is neither $\emptyset$ nor 
$\bigl\{e_{i}^{\max}\eta\bigr\}$, then 
$\QLS_{z}(\lambda) \cap S_{i}(\eta) = S_{i}(\eta)$, 
or equivalently, $S_{i}(\eta) \subset \QLS_{z}(\lambda)$. 
By our assumption, 
$\QLS_{z}(\lambda) \cap S_{i}(\eta)$ contains 
an element $\eta''$ that is not $\eta'$. 
We can write the element $\eta''$ as $\eta''= f_{i}^{p}\eta'$ 
for some $p \ge 1$. 
Here, from the definition of the root operator $f_{i}$, 
we can deduce that 
\begin{equation*}
\iota(f_{i}\eta') = \iota(f_{i}^{2}\eta') = \cdots = 
\iota(f_{i}^{p}\eta') = \cdots = \iota(f_{i}^{\max}\eta').
\end{equation*}
Since $\iota(f_{i}^{p}\eta') = \iota(\eta'') \le z$ by the assumption that 
$\eta'' \in \QLS_{z}(\lambda)$, 
we see that the elements $f_{i}\eta',\,f_{i}^{2}\eta',\,\dots,\,
f_{i}^{\max}\eta'$ are all contained in $\QLS_{z}(\lambda)$. Namely, 
%
%
\begin{equation} \label{eq:lemdem1}
S_{i}(\eta) \setminus \bigl\{\eta'\bigr\} \subset 
\QLS_{z}(\lambda). 
\end{equation}
Hence it remains to show that $\eta' \in \QLS_{z}(\lambda)$. 
If $\iota(\eta')=\iota(\eta'')$, then we have 
$\iota(\eta') \le z$ since $\iota(\eta'') \le z$ by 
the assumption that $\eta'' \in \QLS_{z}(\lambda)$. 
This implies that $\eta' \in \QLS_{z}(\lambda)$. 
Assume now that $\iota(\eta'') \ne r_{i}\iota(\eta')$, and hence
that $\iota(\eta'')=r_{i}\iota(\eta')$. 
Then, by the definition of the root operator $f_{i}$, 
we see that $\pair{\alpha_{i}^{\vee}}{\iota(\eta')\lambda} > 0$. 
Therefore, we deduce that $\pair{\alpha_{i}^{\vee}}{\iota(\eta'')\lambda} = 
\pair{\alpha_{i}^{\vee}}{r_{i}\iota(\eta')\lambda} < 0$, and hence 
that $\iota(\eta')=r_{i}\iota(\eta'') < \iota(\eta'') \le z$.
Thus we obtain $\eta' \in \QLS_{z}(\lambda)$.
Combining this with \eqref{eq:lemdem1}, we conclude that 
$S_{i}(\eta) \subset \QLS_{z}(\lambda)$, as desired. 
This completes the proof of the lemma. 
\end{proof}

\begin{proof}[Proof of Proposition~\ref{prop:dem}]
First, we show that for each $\eta \in \QLS(\lambda)$, 
%
%
\begin{equation} \label{eq:string}
\QLS_{w}(\lambda) \cap S_{i}(\eta)=
\emptyset \text{ or } S_{i}(\eta). 
\end{equation}
Now, assume that 
$\QLS_{w}(\lambda) \cap S_{i}(\eta) \ne \emptyset$. 
Then, we see from Lemma~\ref{lem:string} that 
$\QLS_{w}(\lambda) \cap S_{i}(\eta) = 
\bigl\{e_{i}^{\max}\eta\bigr\}$ or $S_{i}(\eta)$; 
in both cases, we have $e_{i}^{\max}\eta \in \QLS_{w}(\lambda)$. 
Here we recall from the proof of Lemma~\ref{lem:rec} 
that if $\psi \in \QLS_{w}(\lambda)$, 
then $e_{i}^{\max}\psi \in \QLS_{r_{i}w}(\lambda)$. 
Hence it follows that 
$e_{i}^{\max}(e_{i}^{\max}\eta)=e_{i}^{\max}\eta$ 
is contained in $\QLS_{r_{i}w}(\lambda)$.
Therefore, we see from Lemma~\ref{lem:rec} that 
$f_{i}^{p}e_{i}^{\max}\eta \in \QLS_{w}(\lambda)$
for all $p \ge 0$ unless
$f_{i}^{p}e_{i}^{\max}\eta = \bzero$. 
From this, we conclude that $S_{i}(\eta)  
\subset \QLS_{w}(\lambda)$, as desired. 

From \eqref{eq:string}, we deduce that 
$\QLS_{w}(\lambda)$ decomposes into a disjoint union of $\alpha_{i}$-strings: 
\begin{equation*}
\QLS_{w}(\lambda) = S^{(1)} \sqcup S^{(2)} \sqcup \cdots \sqcup S^{(n)}, 
\quad 
\text{where $S^{(m)}$ is an $\alpha_{i}$-string for each $1 \le m \le n$}.
\end{equation*}
Since $i \in I$, the degree function $\Deg$ is 
constant on $S^{(m)}$ for each $1 \le m \le n$ (see \cite[(4.2)]{LNSSS2}); 
we set $d_{m}:=\Deg|_{S^{(m)}}$
for $1 \le m \le n$. Then we have 
\begin{equation*}
\gch \QLS_{w}(\lambda) = 
\sum_{m=1}^{n} q^{-d_{m}} \sum_{\eta \in S^{(m)}} e^{\wt(\eta)}.
\end{equation*}

Next, let us consider the intersection 
$\QLS_{r_{i}w}(\lambda) \cap S^{(m)}$ 
for each $1 \le m \le n$. Recall that 
if $\psi \in \QLS_{w}(\lambda)$, then 
$e_{i}^{\max}\psi \in \QLS_{r_{i}w}(\lambda)$. 
Since $S^{(m)} \subset \QLS_{w}(\lambda)$, 
it follows from the above that 
$\QLS_{r_{i}w}(\lambda)$ contains 
a unique element $\eta_{m} \in S^{(m)}$ 
such that $e_{i}\eta_{m} = \bzero$; 
in particular, $\QLS_{r_{i}w}(\lambda) \cap S^{(m)} \ne \emptyset$. 
Therefore, from Lemma~\ref{lem:string}, we deduce that
\begin{equation*}
\QLS_{r_{i}w}(\lambda) \cap S^{(m)} = 
 \bigl\{\eta_{m}\bigr\} \text{ or } S^{(m)}
\quad \text{for each $1 \le m \le n$}; 
\end{equation*}
here we assume that 
\begin{equation*}
\QLS_{r_{i}w}(\lambda) \cap S^{(m)} = 
\begin{cases}
\bigl\{\eta_{m}\bigr\} & \text{for $1 \le m \le p$}, \\[1.5mm]
S^{(m)} & \text{for $p+1 \le m \le n$}, 
\end{cases}
\end{equation*}
for some $0 \le p \le n$ for simplicity of notation. 
Then, we have 
\begin{equation*}
\gch \QLS_{r_{i}w}(\lambda) = 
\sum_{m=1}^{p} q^{-d_{m}} e^{\wt(\eta_{m})}+
\sum_{m=p+1}^{n} q^{-d_{m}} \sum_{\eta \in S^{(m)}} e^{\wt(\eta)}.
\end{equation*}
Combining all the above, we compute:
\begin{align*}
D_{i} \gch \QLS_{r_{i}w}(\lambda) & = 
\sum_{m=1}^{p} q^{-d_{m}} D_{i} e^{\wt(\eta_{m})} +
\sum_{m=p+1}^{n} q^{-d_{m}} D_{i} 
\Biggl(
  \underbrace{\sum_{\eta \in S^{(m)}} e^{\wt(\eta)}}_{%
  \begin{subarray}{c}
  =D_{i}e^{\wt(\eta_{m})} \\[1mm]
  \text{by \eqref{eq:Demazure}}
  \end{subarray}
  }
\Biggr) \\[1mm]
& = 
\sum_{m=1}^{p} q^{-d_{m}} D_{i} e^{\wt(\eta_{m})} +
\sum_{m=p+1}^{n} q^{-d_{m}} 
\underbrace{D_{i}D_{i}e^{\wt(\eta_{m})}}_{ =D_{i}e^{\wt(\eta_{m})} } \\[3mm]
& = \sum_{m=1}^{n} q^{-d_{m}} D_{i} e^{\wt(\eta_{m})} 
  = \sum_{m=1}^{n} q^{-d_{m}} \sum_{\eta \in S^{(m)}} e^{\wt(\eta)} 
  \quad \text{by \eqref{eq:Demazure}} \\[3mm]
& = \gch \QLS_{w}(\lambda). 
\end{align*}
This completes the proof of the proposition. 
\end{proof}
%
%
\subsection{Recursive formula for $\Mac{w\lambda}$.}
\label{subsec:demM-Mac0}

In view of Theorem~\ref{thm:Mac0}, 
Proposition~\ref{prop:dem} is equivalent to 
the following proposition.
%
%
\begin{prop} \label{prop:demM}
Let $w \in W^{J}$ and $i \in I$ be such that $w > r_{i}w$; 
note that $r_{i}w \in W^{J}$ by {\rm \cite[Lemma~5.8]{LNSSS1}}.
Then we have
\begin{equation*}
\Mac{w\lambda} = D_{i} \Mac{r_{i}w\lambda}.
\end{equation*}
\end{prop}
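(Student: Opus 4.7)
The plan is to derive the recursion directly at the level of the double affine Hecke algebra (DAHA) polynomial representation, where the nonsymmetric Macdonald polynomials $E_{\mu}(x\,;\,q,\,t)$ arise as joint eigenfunctions of the Cherednik operators $Y^{\xi}$, $\xi \in X$. Cherednik's intertwiners $\tau_{i}$, $i \in I$, are elements of the DAHA with the property that, for a generic weight $\mu \in X$, $\tau_{i}$ sends the Cherednik eigenspace of weight $\mu$ to the one of weight $s_{i}\mu$; consequently $\tau_{i} \cdot E_{\mu} = c_{i,\mu}(q,t)\,E_{s_{i}\mu}$ for an explicit rational scalar $c_{i,\mu}(q,t)$. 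The plan is to specialize this identity at $t=0$ for the weights $\mu = r_{i}w\lambda$ and $s_{i}\mu = w\lambda$, and to recognize the resulting operator as the Demazure operator $D_{i}$ acting on $\Mac{r_{i}w\lambda}$.

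First, I would set up the intertwiner $\tau_{i}$ in the form used in Cherednik's and Macdonald's treatment (see \cite{Mac}), namely as a sum of a Hecke generator and a rational function in the $Y^{\xi}$. I would then invoke the standard Knop--Sahi-type formula, which expresses $E_{s_{i}\mu}(x\,;\,q,\,t)$ in terms of $E_{\mu}(x\,;\,q,\,t)$ via $\tau_{i}$ together with an explicit scalar depending on $\pair{\alpha_{i}^{\vee}}{\mu}$ and on the Cherednik eigenvalue of $E_{\mu}$; this formula is valid exactly when $\pair{\alpha_{i}^{\vee}}{\mu}$ has the ``right'' sign, which is precisely what the hypothesis $w > r_{i}w$ on $w \in W^{J}$ provides for $\mu = r_{i}w\lambda$, because $w > r_{i}w$ combined with $w \in W^{J}$ gives $\pair{\alpha_{i}^{\vee}}{w\lambda} < 0$ and hence $\pair{\alpha_{i}^{\vee}}{r_{i}w\lambda} > 0$ (see \cite[Lemmas~5.8, 5.9]{LNSSS1}).

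Second, I would specialize at $t=0$. Under this specialization, the Hecke generator $T_{i}$ (which appears inside $\tau_{i}$) reduces to an operator on $\BZ[q][e^{\xi}]$ that, together with the rational ``correction term'' in $\tau_{i}$, combines into the Demazure operator $D_{i}$ defined by \eqref{eq:Demazure}; this is the classical fact that the Hecke algebra degenerates to the nil-Hecke/Demazure algebra at $t=0$, applied in the affine/Cherednik setting. The scalar $c_{i,\mu}(q,t)$ should specialize to $1$ under our hypothesis (the potential pole at $t=0$ coming from $1-t$ type factors in $c_{i,\mu}$ is cancelled by a matching zero, as long as $\pair{\alpha_{i}^{\vee}}{\mu} > 0$), so one obtains $\Mac{w\lambda} = D_{i}\,\Mac{r_{i}w\lambda}$, as required.

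The main obstacle is the careful bookkeeping in step two: writing $\tau_{i}$ in a form whose $t \to 0$ limit is manifestly the operator $D_{i}$ of \eqref{eq:Demazure} acting on $E_{r_{i}w\lambda}(x\,;\,q,\,t)$, and verifying that the scalar $c_{i,r_{i}w\lambda}(q,t)$ is regular and equals $1$ at $t=0$. This requires tracking the explicit denominator and numerator of the intertwiner scalar, and using the precise value of $\pair{\alpha_{i}^{\vee}}{r_{i}w\lambda}$ to see the cancellation. Once this is verified, the identity follows as the $t=0$ limit of the Knop--Sahi intertwining formula, with no further combinatorial input needed (in particular, Theorem~\ref{thm:Mac0} is not used, so the argument is independent of and equivalent to Proposition~\ref{prop:dem}).
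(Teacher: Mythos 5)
Your proposal takes essentially the same route as the paper: the paper's proof also works directly in the DAHA polynomial representation, invoking Macdonald's intertwiner formula \cite[(5.10.7)]{Mac} (which is the normalized form of the Knop--Sahi identity you cite, with the scalar already absorbed), uses \cite[(5.2.2')]{Mac} to see that the eigenvalue of $Y^{-\alpha_i}$ on $E_{\mu}$ vanishes as $t \to 0$ because $\pair{\alpha_i^\vee}{\mu} > 0$ forces $v(\mu)\alpha_i^\vee$ to be a negative coroot, and then verifies by a short computation that the surviving operator is precisely $\frac{r_i - 1}{1 - e^{\alpha_i}} + 1 = D_i$. Your sketch correctly identifies all three ingredients — the sign condition on $\pair{\alpha_i^\vee}{r_iw\lambda}$ coming from $w > r_iw$ and $w \in W^J$, the degeneration of the intertwiner to $D_i$, and the regularity of the normalization at $t=0$ — though as you note you leave the bookkeeping in the second step unverified, which is exactly the explicit computation the paper supplies.
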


We can also show this proposition 
by using the polynomial representation of 
the double affine Hecke algebra as follows. 

\begin{proof}
Note that $r_{i}w \in W^{J}$ and $\pair{\alpha_{i}^{\vee}}{r_{i}w\lambda} > 0$ 
by \cite[Lemmas 5.8 and 5.9]{LNSSS1}. We set $\mu:=r_{i}w\lambda$. 
Since $\pair{\alpha_{i}^{\vee}}{\mu} = 
\pair{\alpha_{i}^{\vee}}{r_{i}w\lambda} > 0$ as seen above, 
it follows from \cite[(5.10.7)]{Mac} (in the notation thereof) that
\begin{equation} 
\left(t^{2}r_{i}+(t^{2}-1)\frac{1-r_{i}}{ 1-e^{\alpha_{i}} }-
   (t^{2}-1)\frac{1}{1-Y^{-\alpha_{i}}}\right) \cdot \Mact{\mu} 
= \Mact{r_{i}\mu}. \label{eq:demM1}
\end{equation}
Also, we know from \cite[(5.2.2')]{Mac} 
(in the notation thereof) that 
\begin{equation*}
Y^{-\alpha_{i}}\Mact{\mu} = 
q^{ \pair{\alpha_{i}^{\vee}}{\mu} } t^{ -2 \pair{v(\mu)\alpha_{i}^{\vee}}{\rho} }
\Mact{\mu}. 
\end{equation*}
Since $\pair{\alpha_{i}^{\vee}}{\mu} > 0$ as seen above, it follows that 
$\pair{v(\mu)\alpha_{i}^{\vee}}{v(\mu)\mu} = \pair{\alpha_{i}^{\vee}}{\mu} > 0$. 
Since $v(\mu)\mu$ is antidominant by the definition of $v(\mu)$, 
we see that $v(\mu)\alpha_{i}^{\vee}$ is a negative coroot, and hence 
$-2 \pair{v(\mu)\alpha_{i}^{\vee}}{\rho} > 0$. 
Therefore, by taking the limit $t \to 0$, we deduce from 
\eqref{eq:demM1} that 
\begin{equation*}
\left( \frac{r_{i}-1}{1-e^{\alpha_{i}}} + 1 \right) \Mac{\mu} = \Mac{r_{i}\mu}. 
\end{equation*}
We see by direct computation that 
\begin{equation*}
\frac{r_{i}-1}{1-e^{\alpha_{i}}} + 1 = D_{i}.
\end{equation*}
Thus, we obtain 
$D_{i} \Mac{\mu} = \Mac{r_{i}\mu}$, and hence 
$\Mac{w\lambda} = D_{i} \Mac{r_{i}w\lambda}$, as desired. 
This proves Proposition~\ref{prop:demM}.
\end{proof}
\begin{rem}
If we employ the special case $w=e$ of Theorem~\ref{thm:Mac0} as 
the start of the induction and use Proposition~\ref{prop:dem} and 
Proposition~\ref{prop:demM} (proved as above) in the induction step, 
then we can give an inductive proof of Theorem~\ref{thm:Mac0}. 
\end{rem}

%

\newpage


\begin{thebibliography}{LNSSSX}

\bibitem[BN]{BN}
J. Beck and H. Nakajima, 
Crystal bases and two-sided cells of quantum affine algebras, 
{\it Duke Math. J.} {\bf 123} (2004), 335--402. 

\bibitem[BB]{BB}
A. Bj\"{o}rner and F. Brenti, 
``Combinatorics of Coxeter Groups'', 
Graduate Texts in Mathematics Vol.~231, 
Springer, New York, 2005.

\bibitem[I]{Ion}
B. Ion, Nonsymmetric Macdonald polynomials and Demazure characters, 
{\it Duke Math. J.} {\bf 116} (2003), 299--318. 

\bibitem[INS]{INS}
M. Ishii, S. Naito, and D. Sagaki, 
Semi-infinite Lakshmibai-Seshadri path model 
for level-zero extremal weight modules over quantum affine algebras, 
{\em Adv. Math.} \textbf{290} (2016), 967--1009.

\bibitem[Kac]{Kac}
V. G. Kac, 
``Infinite Dimensional Lie Algebras'', 3rd Edition, 
Cambridge University Press, Cambridge, UK, 1990.

\bibitem[Kas1]{K-mod}
M. Kashiwara, Crystal bases of modified quantized 
enveloping algebra, 
{\it Duke Math. J.} {\bf 73} (1994), 383--413. 

\bibitem[Kas2]{K-lv0}
M. Kashiwara, On level-zero representations of 
quantized affine algebras, 
{\it Duke Math. J.} {\bf 112} (2002), 117--175.

\bibitem[Kas3]{K-rims}
M. Kashiwara, Level zero fundamental representations 
over quantized affine algebras and Demazure modules, 
{\it Publ. Res. Inst. Math. Sci.} {\bf 41} (2005), 223--250.

\bibitem[LS]{LS10}
T. Lam and M. Shimozono, 
Quantum cohomology of $G/P$ and homology of 
affine Grassmannian, 
{\it Acta Math.} {\bf 204} (2010), 49--90.

\bibitem[L]{Le}
C.~Lenart,
On the combinatorics of crystal graphs, {I}. {L}usztig's involution,
{\it Adv. Math.} \textbf{211} (2007), 324--340.

\bibitem[LL1]{LL}
C. Lenart and A. Lubovsky, 
A generalization of the alcove model and its applications, 
{\it J. Algebraic Combin.} {\bf 41} (2015), 751--783.

\bibitem[LL2]{LL2}
C.~Lenart and A. Lubovsky,
A uniform realization of the combinatorial $R$-matrix,
preprint 2015, {\tt arXiv:1503.01765}.

\bibitem[LNS$^{3}$1]{LNSSS1}
C. Lenart, S. Naito, D. Sagaki, A. Schilling, and M. Shimozono, 
A uniform model for Kirillov--Reshetikhin crystals I: 
Lifting the parabolic quantum Bruhat graph, 
{\it Int. Math. Res. Not.} {\bf 2015} (2015), 1848--1901. 

\bibitem[LNS$^{3}$2]{LNSSS2}
C. Lenart, S. Naito, D. Sagaki, A. Schilling, and M. Shimozono, 
A uniform model for Kirillov--Reshetikhin crystals I\hspace{-1.5pt}I: 
Alcove model, path model, and $P=X$, 
{\it Int. Math. Res. Not.} {\bf 2016}, doi: 10.1093/imrn/rnw129.

\bibitem[LNS$^{3}$3]{LNSSS3}
C. Lenart, S. Naito, D. Sagaki, A. Schilling, and M. Shimozono, 
Quantum Lakshmibai-Seshadri paths and root operators, 
{\em Proceedings of the 5th Mathematical Society of Japan Seasonal Institute: Schubert Calculus}, Osaka, Japan, 2012;
{\em Advanced Studies in Pure Mathematics} {\bf 71} (2016), 267--294.

\bibitem[LP1]{LP1}
C. Lenart and A. Postnikov, 
Affine Weyl groups in $K$-theory and representation theory, 
{\it Int. Math. Res. Not.} {\bf 2007} (2007), 1--65. 

\bibitem[LP2]{LP2}
C. Lenart and A. Postnikov, 
A combinatorial model for crystals of Kac-Moody algebras, 
{\it Trans. Amer. Math. Soc.} {\bf 360} (2008), 4349--4381. 

\bibitem[L1]{L-Inv}
P. Littelmann, 
A Littlewood-Richardson rule for symmetrizable 
Kac-Moody algebras, 
{\it Invent. Math.} {\bf 116} (1994), 329--346.

\bibitem[L2]{L}
P. Littelmann, 
Paths and root operators in representation theory, 
{\it Ann. of Math.} (2) {\bf 142} (1995), 499--525.

\bibitem[M]{Mac}
I. G. Macdonald, 
``Affine Hecke Algebras and Orthogonal Polynomials'', 
Cambridge Tracts in Mathematics Vol.~157, 
Cambridge University Press, Cambridge, 2003.

\bibitem[NS1]{NSdeg}
S. Naito and D. Sagaki, 
Lakshmibai-Seshadri paths of a level-zero weight shape
and one-dimensional sums associated to
level-zero fundamental representations, 
{\it Compos. Math.} {\bf 144} (2008), 1525--1556. 

\bibitem[NS2]{NS}
S. Naito and D. Sagaki, 
Demazure submodules of level-zero extremal weight modules 
and specializations of Macdonald polynomials, 
{\it Math. Zeit.} {\bf 283} (2016), 937--978.

\bibitem[OS]{OS}
D. Orr and M. Shimozono,
Specializations of nonsymmetric Macdonald-Koornwinder polynomials, 
preprint 2013, {\tt arXiv:1310.0279v1}.

\bibitem[P]{Pet97}
D. Peterson, Quantum cohomology of $G/P$, Lecture Notes, 
Massachusetts Institute of Technology, Cambridge, MA, Spring 1997.

\end{thebibliography}
\end{document}